\newenvironment{customthm}[1]
  {\innercustomthm}
  {\endinnercustomthm}
\newtheorem{theorem}{Theorem}[section]
\newtheorem{lemma}[theorem]{Lemma}
\newtheorem{proposition}[theorem]{Proposition}
\newtheorem{corollary}[theorem]{Corollary}
\theoremstyle{definition}
\newtheorem{definition}[theorem]{Definition}
\newtheorem{notation}[theorem]{Notation}
\numberwithin{equation}{section}
\newcommand{\bR}{\mathbb{R}}
\newcommand{\bQ}{\mathbb{Q}}
\newcommand{\bZ}{\mathbb{Z}}
\newcommand{\bF}{\mathbb{F}}
\newcommand{\Gal}{\textup{Gal}}
\newcommand{\cO}{\mathcal{O}}
\newcommand{\cG}{\mathcal{G}}
\newcommand{\cH}{\mathcal{H}}
\newcommand{\cN}{\mathcal{N}}
\newcommand{\cP}{\mathcal{P}}
\newcommand{\cI}{\mathcal{I}}
\newcommand{\fp}{\mathfrak{p}}
\newcommand{\fq}{\mathfrak{q}}
\newcommand{\fa}{\mathfrak{a}}
\newcommand{\fm}{\mathfrak{m}}
\newcommand{\fD}{\mathfrak{D}}
\newcommand{\fP}{\mathfrak{P}}
\newcommand{\fQ}{\mathfrak{Q}}
\newcommand{\rk}{\textup{rk}}
\newcommand{\Cl}{\textup{Cl}}
\newcommand{\cl}{\textup{cl}}
\newcommand{\Ann}{\textup{Ann}}
\newcommand{\nil}{\textup{nil}}
\newcommand{\di}{\hspace{0.02cm}|\hspace{0.02cm}}
\begin{document}

\title[Class groups]{Lower bounds on the $\ell$-rank of ideal class groups}

\thanks{This research is supported by NSF grant 2336000.}

\author{Daniel E. Martin}
\address{Clemson University, O-110 Martin Hall, 2020 Parkway Drive, Clemson, SC}
\email{dem6@clemson.edu}

\subjclass[2010]{Primary: 11R29. Secondary: 11R32.}

\keywords{ideal class groups, $\ell$-rank bounds, $p$-rank bounds.}

\date{\today}

\begin{abstract}For a prime number $\ell$ and an extension of number fields $K/F$, we prove new lower bounds on the $\ell$-rank of the ideal class group of $K$ based on prime ramification in $K/F$. Unlike related results from the literature, our bound is supported on prime ideals in $F$ over which at least one (rather than each) prime in $K$ has ramification index divisible by $\ell$. This bound holds with a proviso on the Galois group of the normal closure of $K/F$, which is satisfied by towers of Galois extensions, intermediate fields in nilpotent extensions, and intermediate fields in dihedral extensions of degree $8n$, to name a few. We also use our lower bound to prove a new density result on number fields with infinite class field towers.\end{abstract}

\maketitle

\section{Introduction}\label{sec:1}

Given a prime $\ell\in\bZ$, the $\ell$-rank of an abelian group $A$, denoted $\rk_\ell\, A$, is the dimension of $A/A^\ell$ as a vector space over $\bF_\ell$, where $A^\ell$ is the subgroup of $\ell^\text{th}$ powers. Our goal is prove new lower bounds for $\rk_\ell\,\Cl(K)$, where $\Cl(K)$ is the ideal class group of a number field $K$. 

There is an immense literature on such lower bounds. In place of a survey that would inevitably contain gaps, we reference the reader to some of the most recent papers on the subject; the citations therein cover much of its modern history. For the case when $K$ is a quadratic extension of $\bQ$ see \cite{chatt,gillibert,levin,kishi}, for cubic extensions see \cite{kulkarni}, for dihedral extensions of odd or prime degree see \cite{caputo,lemmermeyer}, and for Galois extensions of prime-power degree see \cite{liu}. For a more general lower bound, with no direct restrictions on $K$, we turn to the work of Roquette and Zassenhaus.

\begin{theorem}[Roquette--Zassenhaus \cite{roquette}]\label{thm:roqzass}For a prime $\ell\in\bZ$ and a number field $K$, let $s_\ell(K)$ be the number of prime ideals in $\bQ$ that become $\ell^\text{th}$ powers in $K$. Then $\emph{rk}_{\ell}\,\emph{Cl}(K)\geq s_\ell(K)-2[K:\bQ]+2$.\end{theorem}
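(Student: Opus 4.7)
My plan is to explicitly produce $s_\ell(K)$ $\ell$-torsion classes in $\Cl(K)$ and bound the number of $\bF_\ell$-linear relations among them by $2[K:\bQ]-2$. Let $S$ denote the set of rational primes becoming $\ell$-th powers in $K$, so $|S|=s_\ell(K)$, and for each $p\in S$ fix the ideal $\fA_p$ with $p\cO_K=\fA_p^\ell$. The starting point is the classical identification
$$\Cl(K)[\ell]\;\cong\;H/(K^{\times\ell}\cO_K^\times),\qquad H=\{\alpha\in K^\times:\ell\mid v_\fp(\alpha)\ \forall\,\fp\},$$
given by sending $[\fc]$ with $\fc^\ell=(\alpha)$ to $[\alpha]$; bijectivity uses unique $\ell$-th roots in the torsion-free group of fractional ideals together with $\cO_K^\times\cap K^{\times\ell}=(\cO_K^\times)^\ell$. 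Since $\rk_\ell\Cl(K)=\dim_{\bF_\ell}\Cl(K)[\ell]$, the theorem reduces to a dimension estimate on this explicit quotient.

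Each $p\in S$ lies in $H$, which gives the $\bF_\ell$-linear map
$$\phi\colon\bF_\ell^S\longrightarrow H/(K^{\times\ell}\cO_K^\times),\qquad(a_p)\longmapsto\Big[\textstyle\prod_p p^{a_p}\Big].$$
The image has dimension $s_\ell(K)-\dim\ker\phi$, so it suffices to show $\dim\ker\phi\leq 2[K:\bQ]-2$. A tuple $(a_p)$ lies in $\ker\phi$ iff $\prod p^{a_p}=u\alpha^\ell$ for some $u\in\cO_K^\times$ and $\alpha\in K^\times$, and the class $[u]\in\cO_K^\times/(\cO_K^\times)^\ell$ is independent of the choice of $\alpha$. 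This produces a homomorphism $\Phi_1\colon\ker\phi\to\cO_K^\times/(\cO_K^\times)^\ell$ with $\ker\Phi_1=\{(a_p):\prod p^{a_p}\in K^{\times\ell}\}$, so $\dim\ker\phi=\dim\im\Phi_1+\dim\ker\Phi_1$.

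I then bound the two terms separately. Dirichlet's unit theorem gives $\dim\im\Phi_1\leq\dim\cO_K^\times/(\cO_K^\times)^\ell\leq r_1+r_2$, and Kummer theory, applied after adjoining $\mu_\ell$, yields $\dim\ker\Phi_1\leq\log_\ell[K(\mu_\ell):\bQ(\mu_\ell)]\leq\log_\ell[K:\bQ]$ (the subgroup of $\bF_\ell^S$ hitting $K(\mu_\ell)^{\times\ell}$ corresponds, via Kummer, to an intersection inside $\bQ(\mu_\ell)(\sqrt[\ell]{M})$ of degree $\ell^{s_\ell(K)}$). The naive sum $r_1+r_2+\log_\ell[K:\bQ]$ is just slightly too weak, so the main technical work is to tighten it to $2[K:\bQ]-2$ by splitting cases on signature and on whether $\mu_\ell\subseteq K$: for $\ell=2$ with $r_1>0$, positivity of $\prod p^{a_p}$ forces $u$ to be totally positive and cuts $\dim\im\Phi_1$ by at least one; for $\ell$ odd with a real embedding, $\mu_\ell\not\subseteq K$ forces $\dim\cO_K^\times/(\cO_K^\times)^\ell=r_1+r_2-1$; the totally complex cases satisfy $r_1+r_2=[K:\bQ]/2$, leaving ample slack. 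I expect coordinating these cases—especially the $\ell$-odd totally complex case with $\mu_\ell\subseteq K$—to be the principal technical hurdle.
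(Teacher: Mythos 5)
Your approach is correct and differs from the paper's, which in fact does not prove Roquette--Zassenhaus at all (it cites \cite{roquette}); the closest analogue the paper proves is Lemma~\ref{lem:consus}, a relative generalization that imports inequality (\ref{eq:consus}) from Connell--Sussman and then works with exact sequences of ideal groups $\cI_p$, $\cP_F$. Your argument is self-contained and more elementary: you identify $\Cl(K)[\ell]$ with $H/(K^{\times\ell}\cO_K^\times)$ (noting $H$ coincides with the group $G_K$ in the paper's notation), push $\bF_\ell^S$ into it, and split $\ker\phi$ along the unit map $\Phi_1$ and a Kummer-theoretic bound. This actually gives the sharper inequality $\rk_\ell\Cl(K)\geq s_\ell(K)-\rk_\ell\cO_K^\times-\lfloor\log_\ell[K(\mu_\ell):\bQ(\mu_\ell)]\rfloor$ (with a further unit saved via positivity when $\ell=2$, $r_1>0$), which is precisely the kind of improvement on $2[K:\bQ]-2$ the paper attributes to Connell--Sussman. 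The case analysis you leave unfinished does close cleanly once one uses the floor: with $n=[K:\bQ]$, the worst signature is always $r_2=0$, where the bound becomes $(n-1)+\lfloor\log_\ell n\rfloor\leq 2n-2$, with equality only at $n=2$; the totally complex case you flag as the hurdle is in fact the slackest since $r_1+r_2=n/2$ and $[K(\mu_\ell):\bQ(\mu_\ell)]\leq n/(\ell-1)$. Two small points worth making explicit if you write this up: (i) the Kummer degree count requires that distinct rational primes remain $\bF_\ell$-independent in $\bQ(\mu_\ell)^\times/\bQ(\mu_\ell)^{\times\ell}$, which for $\ell$ odd needs the norm-to-$\bQ$ argument ($\gcd(\ell-1,\ell)=1$) rather than being automatic; (ii) you should state that $\dim\ker\Phi_1$, being an integer, is bounded by the floor of $\log_\ell[K(\mu_\ell):\bQ(\mu_\ell)]$, since that floor is what actually makes the $n=2$ case close.
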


This theorem is the generalization to non-Galois extensions of a result of Brumer and Rosen \cite{rosen}, which Brumer used in combination with the Golod--Shafarevich theorem to give new examples of number fields with infinite class field towers \cite{brumer}. The constant $2[K:\bQ]-2$ given above was improved by Connell and Sussman (as discussed in Section \ref{sec:2}), who also generalized the theorem to relative extensions $K/F$ \cite{connell}.

The point of Theorem~\ref{thm:roqzass}, and indeed parts of all the aforementioned papers, is to improve on the lower bound implied by genus theory, which in many cases is trivial. In order to do so, however, Theorem~\ref{thm:roqzass} requires the presence of many primes $p$ in $\bQ$ that meet a rather severe restriction: $\ell$ must divide \textit{every} ramification index of $p$ in $K$. We can motivate such a restriction by observing the case $\ell=2$ for cubic extensions of $\bQ$. On the one hand, Bhargava and Varma prove that when cubic fields are ordered by discriminant magnitude, the average number of 2-torsion elements in the class group is $5/4$ \cite{bhargava}. On the other hand, the average number of ramified primes grows without bound. So in a cubic extension $K/\bQ$ with ring of integers $\cO_K$, an arbitrary abundance of prime splittings of the form $p\cO_K=\fp_1^2\fp_2$ can, and typically does, contribute almost nothing to $\rk_2\,\Cl(K)$.

We will show that cubic fields are neither the rule nor the exception. As one example, in a quartic extension $K/\bQ$ with dihedral Galois group (of the normal closure) $D_4$, ramified primes, even those of the form $p\cO_K=\fp_1^2\fp_2$ or $\fp_1^2\fp_2\fp_3$, do contribute to $\rk_2\,\Cl(K)$. This is in contrast to genus theory, where primes $p$ that split as $\fp_1^2\fp_2$ or $\fp_1^2\fp_2\fp_3$ do not contribute to the genus number (as the genus field cannot contain $\sqrt{\pm p}$).

More generally, we prove that $\rk_\ell\,\Cl(K)$ grows alongside the number of ramification indices divisible by $\ell$ for a number field with Galois group $D_n$ as long as $4\,|\,n$. (In the aforementioned cubic case in which such growth does not occur, $n=3$.) Dihedral extensions form a proper subset of number fields that possess this property. The full collection of number fields we consider is defined by the Galois group condition below. Before the definition, however, we note a small departure from common notation.

\begin{notation} For an extension of number fields $K/F$, $\Gal(K/F)$ denotes a pair of groups written $G/H$, where $G$ is the Galois group of the closure of $K/F$ and $H$ is the maximal subgroup of $G$ that fixes $K$. We call $K$ a $G/H$\emph{-extension} of $F$.\end{notation}

Note that when $K/F$ is normal, $\Gal(K/F)=G/1$, where $G$ is the usual Galois group. In this case we sometimes write ``$G$" instead of ``$G/1$", as there is never a risk of confusion.

We use $\cl_G(\sigma)$ to denote the conjugacy class of an element $\sigma$ in a group $G$.

\begin{definition}\label{def:divisible}Let $H\leq G$ be groups. We say the pair $G/H$ is $\ell$\emph{-divisible} if there are subgroups $H\leq G_1,...,G_\delta\leq G$ such that given any $\sigma\in G-H$ of order $\ell$, there exists $i\leq \delta$ such that $\cl_{G_i}(\tau)\cap H=\emptyset$ for some $\tau\in\cl_G(\sigma)\cap G_i$. For such a pair, let $\delta_\ell(G/H)$ denote the smallest possible value of $\delta$. We say an extension of number fields $K/F$ is $\ell$\emph{-divisible} if $\Gal(K/F)$ is $\ell$-divisible. For such an extension, let $\delta_\ell(K/F)=\delta_\ell(\Gal(K/F))$.\end{definition}

Returning to the dihedral group $D_n=\langle r,s\mid r^n,s^2,(rs)^2\rangle$, if $4\,|\, n$ then $D_n/\langle s\rangle$ is $2$-divisible with $\delta_2(D_n/\langle s\rangle)=2$, and if $4\nmid n$ then $D_n/\langle s\rangle$ is not $2$-divisible. For an odd prime $\ell$, $\delta_\ell(D_n/\langle s\rangle)$ is 1 if $\ell\,|\, n$ and $0$ otherwise. This is proved in the appendix as a corollary to Theorem~\ref{thm:semidirect}, which gives a general criterion for $\ell$-divisibility of a semidirect product. Also in the appendix, which is essentially an index of ``common" $\ell$-divisible groups and field extensions, are proofs that Galois extensions (Proposition~\ref{prop:Galois}), towers of $\ell$-divisible extensions (Theorem~\ref{thm:fieldtower}), composita of $\ell$-divisible extensions (Theorem~\ref{thm:composite}), and extensions with unique Sylow $\ell$-subgroups (Theorem~\ref{thm:normalS}) are all $\ell$-divisible. 

It is these $\ell$-divisible extensions for which our weaker ramification requirement---$\ell$ divides \textit{some} ramification index of $p$---forces $\ell$-torsion in the class group. The following is proved in Section \ref{sec:2}.

\begin{theorem}\label{thm:intro1}For an extension of number fields $K/F$, let $t_\ell(K/F)$ be the number of prime ideals in $F$ with at least one ramification index divisible by $\ell$, and let $e_\ell(K/F)$ be the exponent of $\ell$ in the prime factorization of $[K:F]$. If $K/F$ is $\ell$-divisible and $t_{\ell}(K/F)\geq 1$, then $$\rk_\ell\,\Cl(K)\geq \frac{t_\ell(K/F)}{\delta_\ell(K/F)}-\rk_{\ell}\,\cO_K^*+\rk_{\ell}\,\cO_F^*-e_\ell(K/F).$$\end{theorem}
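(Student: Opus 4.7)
The plan is to reduce to the relative Brumer--Rosen / Connell--Sussman inequality, applied on a cleverly chosen intermediate subfield. Let $L$ be the Galois closure of $K/F$, set $G=\Gal(L/F)$ and $H=\Gal(L/K)$, and let $G_1,\dots,G_\delta$ (with $\delta=\delta_\ell(K/F)$) be subgroups witnessing $\ell$-divisibility as in Definition~\ref{def:divisible}. Since $H\leq G_i\leq G$, each fixed field $L^{G_i}$ lies between $F$ and $K$, and $K/L^{G_i}$ is itself a $G_i/H$-extension (with Galois closure $L/L^{G_i}$). I will show that, after a pigeonhole step, some $L^{G_i}$ has at least $t_\ell(K/F)/\delta_\ell(K/F)$ primes that become $\ell$-th powers in $K$, and the relative bound finishes the job.

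The core step is a Galois-theoretic translation of ramification. Fix a prime $p$ of $F$ counted by $t_\ell(K/F)$ and a prime $\fP$ of $L$ above $p$. The orbit formula $e(\fp|p)=[gI_\fP g^{-1}:gI_\fP g^{-1}\cap H]$ for ramification of primes $\fp$ of $K$ above $p$ shows that the hypothesis on $p$ yields an element $\sigma\in G-H$ of order $\ell$ lying in the inertia group of some prime of $L$ above $p$. Applying $\ell$-divisibility to $\sigma$ produces an index $i(p)\leq\delta$ and a conjugate $\tau\in\cl_G(\sigma)\cap G_{i(p)}$ with $\cl_{G_{i(p)}}(\tau)\cap H=\emptyset$. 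Writing $\tau=g'\sigma g'^{-1}$ and transporting the prime carrying $\sigma$ by $g'$ gives a prime $\fP'$ of $L$ above $p$ with $\tau\in I_{\fP'}\cap G_{i(p)}$; set $\fq=\fP'\cap L^{G_{i(p)}}$. Since the inertia group of $\fP'$ in $L/L^{G_{i(p)}}$ is $I_{\fP'}\cap G_{i(p)}$, and every $G_{i(p)}$-conjugate of $\tau$ lies outside $H$, the orbit formula applied to the tower $K/L^{G_{i(p)}}$ forces \emph{every} ramification index of $\fq$ in $K$ to be divisible by $\ell$, i.e.\ $\fq$ becomes an $\ell$-th power in $K$. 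Pigeonhole on $p\mapsto i(p)$ then produces an $i$ collecting at least $t_\ell(K/F)/\delta_\ell(K/F)$ such primes of $L^{G_i}$, all distinct because they lie over distinct primes of $F$.

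Applying the relative Brumer--Rosen / Connell--Sussman bound developed in Section~\ref{sec:2} to $K/L^{G_i}$ yields
$$\rk_\ell\,\Cl(K)\geq s_\ell(K/L^{G_i})-\rk_\ell\,\cO_K^{*}+\rk_\ell\,\cO_{L^{G_i}}^{*}-e_\ell(K/L^{G_i}),$$
and one concludes via $s_\ell(K/L^{G_i})\geq t_\ell(K/F)/\delta_\ell(K/F)$ from the previous paragraph, $\rk_\ell\,\cO_{L^{G_i}}^{*}\geq\rk_\ell\,\cO_F^{*}$ (Dirichlet gives a non-decreasing torsion-free unit rank, and the $\mu_\ell$-part only grows with the field), and $e_\ell(K/L^{G_i})\leq e_\ell(K/F)$ (since $[K:L^{G_i}]\mid[K:F]$). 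I expect the main obstacle to lie in the ramification-translation step: choosing $\fP'$ so that $\tau$ truly sits inside $I_{\fP'}\cap G_{i(p)}$, and then unpacking the orbit formula in the possibly non-Galois relative extension $K/L^{G_{i(p)}}$ to convert the group-theoretic condition $\cl_{G_{i(p)}}(\tau)\cap H=\emptyset$ into the number-theoretic assertion that every ramification index of $\fq$ in $K$ is divisible by $\ell$. Once that bridge is carefully established, the pigeonhole and the invocation of the relative bound are routine.
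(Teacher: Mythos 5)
Your architecture is the same as the paper's: translate the ramification hypothesis at $p$ into an inertia-group statement, use $\ell$-divisibility to find an index $i$ and an intermediate field $L^{G_i}$ containing a prime $\fq$ over $p$ that becomes an $\ell$-th power in $K$, pigeonhole over $i$, and finish with the relative Brumer--Rosen/Connell--Sussman bound (Lemma~\ref{lem:consus}) applied to $K/L^{G_i}$, bounding the Kummer term by $e_\ell(K/F)$ and the unit ranks as you indicate. The second half of your translation step --- converting $\cl_{G_i}(\tau)\cap H=\emptyset$ into ``every ramification index of $\fq$ in $K$ is divisible by $\ell$'' via Sylow conjugacy inside the inertia groups over $\fq$ --- is exactly the right argument, and your identification of it as the delicate point is fair.

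There is, however, a genuine gap in the first half, which you dispatch in one clause: ``the orbit formula shows that the hypothesis on $p$ yields an element $\sigma\in G-H$ of order $\ell$ lying in the inertia group of some prime of $L$ above $p$.'' The orbit formula only gives $\ell\mid[I(\fP\di p):H\cap I(\fP\di p)]$, hence an element of $I(\fP\di p)-H$ of order $\ell^{m}$ with $m\geq 1$; it does not produce one of order exactly $\ell$. If a Sylow $\ell$-subgroup of $I(\fP\di p)$ is cyclic of order $\ell^{2}$ and meets $H$ in its subgroup of order $\ell$, then every order-$\ell$ element of $I(\fP\di p)$ lies in $H$, and the same can hold for every conjugate inertia group if one only assumes $L$ is some normal overfield (take $G$ cyclic of order $\ell^{2}$ and $H$ its order-$\ell$ subgroup). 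The claim you need is still true, but only because $L$ is the \emph{normal closure} of $K/F$: taking $\sigma\in I(\fP\di p)-H$ of order $\ell^{m}$, the conjugates of $\sigma^{\ell^{m-1}}$ generate a nontrivial normal subgroup of $G$, which cannot be contained in $H$, so some $G$-conjugate of $\sigma^{\ell^{m-1}}$ lies in $G-H$ (and sits in the inertia group of a conjugate prime above $p$). Without this step Definition~\ref{def:divisible}, which constrains only order-$\ell$ elements of $G-H$, cannot be invoked at all. Once this is supplied, the rest of your sketch goes through.
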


Remark that $\rk_\ell\,\cO_K^*$ is either $\rk\,\cO_K^*$ (the free rank of the unit group) or $\rk\,\cO_K^*+1$ depending on whether $K$ contains an $\ell^\text{th}$ root of unity.

We highlight several corollaries of Theorem~\ref{thm:intro1} that are not proved in-text, but are immediate consequences of results from the appendix.

\begin{corollary}\label{cor:intro1}Let $F=F_0\subseteq F_1\subseteq\cdots\subseteq F_n=K$ be number fields such that each $F_i/F_{i-1}$ is Galois. Then $\rk_\ell\,\Cl(K)\geq t_\ell(K/F)/n-\rk_{\ell}\,\cO_K^*+\rk_{\ell}\,\cO_F^*-e_\ell(K/F)$.\end{corollary}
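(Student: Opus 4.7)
The plan is to apply Theorem~\ref{thm:intro1} directly after verifying its hypotheses for the extension $K/F$, which amounts to showing two things: that $K/F$ is $\ell$-divisible, and that $\delta_\ell(K/F) \leq n$.

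To handle the case $t_\ell(K/F) = 0$ first: since $\rk_\ell\,\cO_F^* \leq \rk_\ell\,\cO_K^*$ and $e_\ell(K/F) \geq 0$, the claimed right-hand side is nonpositive, so the bound holds trivially. Assume therefore $t_\ell(K/F) \geq 1$, which is the regime where Theorem~\ref{thm:intro1} is stated.

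For the structural hypotheses, I would proceed layer by layer up the tower. Each extension $F_i/F_{i-1}$ is Galois by assumption, so Proposition~\ref{prop:Galois} (from the appendix) asserts that each such step is $\ell$-divisible with $\delta_\ell(F_i/F_{i-1}) \leq 1$. The appendix's Theorem~\ref{thm:fieldtower} on towers of $\ell$-divisible extensions then combines these layers: it guarantees that $K/F$ is itself $\ell$-divisible and furnishes (what I expect to be) an additive bound on $\delta_\ell$ along the tower, yielding
$$\delta_\ell(K/F) \;\leq\; \sum_{i=1}^{n}\delta_\ell(F_i/F_{i-1}) \;\leq\; n.$$
With $\ell$-divisibility of $K/F$ and the bound $\delta_\ell(K/F)\leq n$ in hand, Theorem~\ref{thm:intro1} gives
$$\rk_\ell\,\Cl(K) \;\geq\; \frac{t_\ell(K/F)}{\delta_\ell(K/F)} - \rk_\ell\,\cO_K^* + \rk_\ell\,\cO_F^* - e_\ell(K/F) \;\geq\; \frac{t_\ell(K/F)}{n} - \rk_\ell\,\cO_K^* + \rk_\ell\,\cO_F^* - e_\ell(K/F),$$
since shrinking the denominator only decreases the fraction and $t_\ell(K/F)\geq 0$.

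The only real obstacle is conceptual rather than computational: the whole corollary rests on the tower theorem in the appendix actually producing an \emph{additive} bound on $\delta_\ell$ (as opposed to, say, a multiplicative one). Given that the statement of the corollary has the factor $1/n$ rather than $1/n!$ or $1/(\text{product of layer degrees})$, this is clearly the intended content of Theorem~\ref{thm:fieldtower}, and the proof proposal reduces to invoking that result together with Proposition~\ref{prop:Galois}.
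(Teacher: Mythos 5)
Your proposal is correct and follows exactly the paper's intended route: Proposition~\ref{prop:Galois} gives $\delta_\ell(F_i/F_{i-1})\leq 1$ for each Galois step, Theorem~\ref{thm:fieldtower} supplies precisely the additive bound $\delta_\ell(K/F)\leq\sum_i\delta_\ell(F_i/F_{i-1})\leq n$ you anticipated (the paper packages this as Corollary~\ref{cor:galoistower}), and Theorem~\ref{thm:intro1} then yields the stated inequality. Your explicit handling of the $t_\ell(K/F)=0$ case is a small but welcome addition that the paper leaves implicit.
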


It is interesting to note in the last corollary that $K/F_1$ may well be unramified. Ramification in $F_1/F$ contributes to $\rk_\ell\,\Cl(K)$, but with diminishing returns as $n$ grows.

\begin{corollary}\label{cor:intro2}Let $G/H=\Gal(K/F)$, and suppose $G$ has a unique Sylow $\ell$ subgroup of nilpotency class $n$. Then $\rk_\ell\,\Cl(K)\geq t_\ell(K/F)/n-\rk_{\ell}\,\cO_K^*+\rk_{\ell}\,\cO_F^*-e_\ell(K/F)$.\end{corollary}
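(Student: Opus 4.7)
The plan is to deduce the corollary from Theorem~\ref{thm:intro1} by showing that $K/F$ is $\ell$-divisible with $\delta_\ell(K/F)\leq n$. Granting this bound, plugging into Theorem~\ref{thm:intro1} is immediate; the bound itself is what Theorem~\ref{thm:normalS} in the appendix should supply, and below I outline how I would prove it.

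Write $G/H=\Gal(K/F)$ and let $P$ be the unique Sylow $\ell$-subgroup of $G$. Then $P$ is characteristic, hence normal in $G$, and every order-$\ell$ element of $G$ lies in $P$. The nilpotency class hypothesis yields an upper central series
\[
1=Z_0\triangleleft Z_1\triangleleft\cdots\triangleleft Z_n=P,
\]
each $Z_i$ characteristic in $P$ and hence normal in $G$. As the $n$ witnesses for $\ell$-divisibility, take $G_i=HZ_i$ for $i=1,\ldots,n$; these are subgroups of $G$ containing $H$. For $\sigma\in G-H$ of order $\ell$, let $j$ be minimal with $\sigma\in Z_j$, so that $\sigma\in G_j$, and the goal is to produce some $\tau\in\cl_G(\sigma)\cap G_j$ with $\cl_{G_j}(\tau)\cap H=\emptyset$.

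The main obstacle is this last verification. The lever is the centrality identity $[P,Z_j]\subseteq Z_{j-1}$, which forces every $G_j$-conjugate of $\tau$ to lie in $(h\tau h^{-1})Z_{j-1}$ for some $h\in H$, so that $\cl_{G_j}(\tau)\cap H\neq\emptyset$ would require some $h\in H$ with $h\tau h^{-1}\in HZ_{j-1}$. One must then pick the $G$-conjugate $\tau$ so that its $H$-orbit in the $\bF_\ell$-vector space $Z_j/Z_{j-1}$ avoids the image of $H\cap Z_j$, using the minimality of $j$ (so that $\sigma\notin Z_{j-1}$) to see that such $\tau$ exists. This selection is the technical heart of Theorem~\ref{thm:normalS}.

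With $\delta_\ell(K/F)\leq n$ established, Theorem~\ref{thm:intro1} yields the corollary whenever $t_\ell(K/F)\geq 1$. The remaining case $t_\ell(K/F)=0$ is trivial: the stated inequality reduces to $\rk_\ell\,\Cl(K)\geq \rk_\ell\,\cO_F^*-\rk_\ell\,\cO_K^*-e_\ell(K/F)$, and $\rk_\ell\,\cO_K^*\geq \rk_\ell\,\cO_F^*$ by Dirichlet's unit theorem together with $\mu_\ell(F)\subseteq\mu_\ell(K)$, while both $\rk_\ell\,\Cl(K)$ and $e_\ell(K/F)$ are nonnegative.
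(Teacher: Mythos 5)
Your high-level plan is correct: by Theorem~\ref{thm:intro1} it suffices to show $\delta_\ell(K/F)\leq n$, which the paper supplies via Theorem~\ref{thm:normalS}, and your candidate witnesses $G_i=HZ_i$ are in fact the ones that emerge from the paper's argument. The gap is in your selection of the index $j$ and the accompanying existence claim for $\tau$. You pick $j$ minimal with $\sigma\in Z_j$ and assert that $\sigma\notin Z_{j-1}$ ensures some $G$-conjugate $\tau$ has its $H$-orbit in $Z_j/Z_{j-1}$ avoiding the image of $H\cap Z_j$. This is false. Take $G=D_4=\langle r,s\mid r^4,s^2,(rs)^2\rangle$, $\ell=2$, $H=\langle s\rangle$, so $Z_1=\langle r^2\rangle$, $Z_2=G$, $n=2$. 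For $\sigma=r^2s$ your rule gives $j=2$ (since $r^2s\notin Z_1$), but $\cl_G(r^2s)=\{r^2s,\,s\}\subseteq(H\cap Z_2)Z_1=HZ_1$, so every choice of $\tau\in\cl_G(\sigma)$ has $\cl_{G_2}(\tau)=\cl_G(\sigma)\ni s\in H$. No $\tau$ works at level $j=2$, even though $\sigma\notin Z_1$.

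The repair is to index by the coarser chain $H=HZ_0\leq HZ_1\leq\cdots\leq HZ_n$ rather than by $1\leq Z_1\leq\cdots\leq Z_n$: choose $j$ minimal with $\cl_G(\sigma)\subseteq HZ_j$. In the example this gives $j=1$, and $G_1=HZ_1$ is abelian of order $4$, so $\cl_{G_1}(r^2s)=\{r^2s\}$ is disjoint from $H$. The paper avoids the pointwise selection of $\tau$ entirely: it argues by reverse induction, quotienting by $S_i$ so that $S_{i+1}/S_i$ becomes central, applying Lemma~\ref{lem:oneway} to reduce each layer to the normal abelian pair $HS_{i+1}/HS_i$ (where $\ell$-divisibility is immediate), and combining the layers via Lemma~\ref{lem:grouptower}, whose proof dispatches $\sigma$ according to whether $\cl_G(\sigma)\subseteq HS_j$. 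A secondary quibble: $Z_j/Z_{j-1}$ is an abelian $\ell$-group but need not be elementary abelian, so it is not generally an $\bF_\ell$-vector space, though this is minor next to the main issue. Your handling of the degenerate case $t_\ell(K/F)=0$ is fine.
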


\begin{corollary}\label{cor:intro3}Suppose $\Gal(K/F)=(N\rtimes_\varphi H)/H$ with $H$ and $N$ abelian. If for any $h\in H$ of order $\ell$, $\varphi(h)$ is the identity if and only if it is the identity on the Sylow $\ell$-subgroup of $N$, then $\rk_\ell\,\Cl(K)\geq t_\ell(K/F)/2-\rk_{\ell}\,\cO_K^*+\rk_{\ell}\,\cO_F^*-e_\ell(K/F)$.\end{corollary}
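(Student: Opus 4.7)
My plan is to deduce this from Theorem~\ref{thm:intro1} by invoking the appendix's semidirect-product criterion, Theorem~\ref{thm:semidirect}, to bound $\delta_\ell(\Gal(K/F)) \leq 2$. The reduction is essentially immediate once one matches the corollary's hypothesis on $\varphi$ with the hypothesis of Theorem~\ref{thm:semidirect}.

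Concretely, write $G = N \rtimes_\varphi H$ with $H$ identified inside $G$ as $\{(1, h) : h \in H\}$, and let $N_\ell$ denote the Sylow $\ell$-subgroup of $N$, which is characteristic (hence $\varphi$-invariant) since $N$ is abelian. The natural first witness subgroup is $G_1 := N_\ell \rtimes_\varphi H$, which contains $H$. Given $\sigma = (n, h) \in G \setminus H$ of order $\ell$, if $h = 1$ or $\varphi(h) = 1$ then expanding $\sigma^\ell = (n^\ell, 1) = 1$ forces $n \in N_\ell$, so $\sigma \in G_1$; a short calculation using that $N$ and $H$ are abelian shows the $G_1$-conjugacy class of $\sigma$ equals $\{(\varphi(h')(n), h) : h' \in H\}$, which avoids $H$ because $n \neq 1$.

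The hard case is $h \neq 1$ with $\varphi(h) \neq 1$. Here the corollary's hypothesis is essential: it forces $\varphi(h)|_{N_\ell} \neq 1$, which is exactly the input Theorem~\ref{thm:semidirect} requires to produce a second witness $G_2 \supseteq H$ covering the elements whose $G_1$-conjugacy class would meet $H$. That this issue genuinely arises, and so forces $\delta_\ell = 2$ rather than $1$, is visible already in the dihedral example $D_{12}/\langle s \rangle$: although one can $G$-conjugate such a $\sigma$ into $G_1$ via an element of the Hall $\ell'$-subgroup $N_{\ell'}$ (using the vanishing of Tate cohomology for the cyclic $\langle \varphi(h) \rangle$-action on $N_{\ell'}$, whose order is coprime to $\ell$), the resulting $G_1$-conjugacy class is not guaranteed to miss $H$. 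Overcoming this is where I expect the real work to lie, and it is precisely the content of Theorem~\ref{thm:semidirect}. With $\delta_\ell(\Gal(K/F)) \leq 2$ established, the stated inequality follows by substitution into Theorem~\ref{thm:intro1}.
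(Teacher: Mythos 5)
Your derivation matches the paper's: Corollary~\ref{cor:intro3} is obtained there exactly as you propose, by observing (in the remark following Theorem~\ref{thm:semidirect}) that when $H$ and $N$ are abelian the Sylow $\ell$-subgroups $S\leq\varphi(H)$ and $T\leq N$ are unique with $\nil(S)=\nil(T)=1$, so the stated condition on $\varphi$ supplies the injectivity hypothesis of Theorem~\ref{thm:semidirect} and yields $\delta_\ell(\Gal(K/F))\leq 1\cdot 1+1=2$, after which one substitutes into Theorem~\ref{thm:intro1}. Your additional commentary on the witness subgroups and the $D_{12}$ example correctly reflects the internal structure of the proof of Theorem~\ref{thm:semidirect}, but is not needed once that theorem is invoked as a black box.
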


For $\ell=2$, the above condition on $\varphi(h)$ is met in the dihedral group of order $2n$ if and only if $4\,|\,n$, hence our prior assertion.

Our second result in Section \ref{sec:2} is somewhat of a corollary to the proof of Theorem~\ref{thm:intro1}. There is potentially significant improvement on Theorem~\ref{thm:intro1}'s lower bound obtained by strengthening the $\ell$-divisibility hypothesis. 

\begin{definition}\label{def:Divisible}We say the pair of groups $G/H$ is \emph{strongly} $\ell$-\emph{divisible} if there are subgroups $H\leq G_1,...,G_\delta\leq G$ such that given any $\sigma\in G-H$ satisfying $\sigma^\ell\in H$ and $|\sigma|=\ell^n$ for some $n$, there exists $i\leq \delta$ such that $\sigma\in G_i$ and $\cl_{G_i}(\sigma)\cap H=\emptyset$. For such a pair, let $\Delta_\ell(G/H)$ denote the smallest possible value of $\frac{1}{\ell}\sum_i[G_i:H]$. We say an extension of number fields $K/F$ is \emph{strongly} $\ell$\emph{-divisible} if $\Gal(K/F)$ is strongly $\ell$-divisible. For such an extension, let $\Delta_\ell(K/F)=\Delta_\ell(\Gal(K/F))$.\end{definition}

In such extensions, our class number bound is no longer supported on ramified primes in the base field $F$ as in Theorem~\ref{thm:intro1} (or like the genus number). We now count primes in $K$.

\begin{theorem}\label{thm:intro2}For an extension of number fields $K/F$, let $T_\ell(K/F)$ be the number of prime ideals in $K$ with ramification index divisible by $\ell$. If $K/F$ is strongly $\ell$-divisible and $T_{\ell}(K/F)\geq 1$, then $$\rk_\ell\,\Cl(K)\geq \frac{T_\ell(K/F)}{\Delta_\ell(K/F)}-\rk_{\ell}\,\cO_K^*+\rk_{\ell}\,\cO_F^*-e_\ell(K/F).$$\end{theorem}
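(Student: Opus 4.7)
The plan mirrors the strategy for Theorem~\ref{thm:intro1}, with the strong $\ell$-divisibility hypothesis allowing us to work prime-by-prime in $K$ rather than grouping by primes of $F$. Let $\tilde K$ denote the normal closure of $K/F$, set $G=\Gal(\tilde K/F)$, and let $H\leq G$ be the subgroup fixing $K$. Choose witnesses $H\leq G_1,\dots,G_\delta\leq G$ of strong $\ell$-divisibility realizing $\sum_{i=1}^\delta[G_i:H]=\ell\,\Delta_\ell(K/F)$, and set $F_i=\tilde K^{G_i}$ so that $F\subseteq F_i\subseteq K$.

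The local step is, for each prime $\fP$ of $K$ with $\ell\mid e(\fP|\fP\cap F)$, to produce an index $i$ for which $\fP_i:=\tilde\fP\cap F_i$ (having fixed once and for all a prime $\tilde\fP$ of $\tilde K$ above $\fP$) is \emph{good}, meaning every prime of $K$ above $\fP_i$ has ramification index over $\fP_i$ divisible by $\ell$. Writing $I=I(\tilde\fP|\fP\cap F)$, the equality $[I:I\cap H]=e(\fP|\fP\cap F)$ together with $\ell\mid e(\fP|\fP\cap F)$ implies that no $\ell$-Sylow subgroup $P$ of $I$ lies in $H$; a minimum-order element $\sigma\in P\setminus H$ then has $|\sigma|=\ell^n$ and $\sigma^\ell\in H$. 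Strong $\ell$-divisibility supplies $i$ with $\sigma\in G_i$ and $\cl_{G_i}(\sigma)\cap H=\emptyset$. The primes of $K$ above $\fP_i$ are parameterized by double cosets in $H\backslash G_i/D(\tilde\fP|\fP_i)$; the one indexed by $g\in G_i$ has ramification index $[g(I\cap G_i)g^{-1}:g(I\cap G_i)g^{-1}\cap H]$. Since $g\sigma g^{-1}$ has $\ell$-power order and $\cl_{G_i}(g\sigma g^{-1})=\cl_{G_i}(\sigma)$ (as $g\in G_i$) is disjoint from $H$, the element $g\sigma g^{-1}$ acts fixed-point-freely on $g(I\cap G_i)g^{-1}/(g(I\cap G_i)g^{-1}\cap H)$, forcing that index to be divisible by $\ell$. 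Hence $\fP_i$ is good.

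For the global count, a good $\fP_i$ lies below at most $[G_i:H]/\ell$ primes of $K$, because $\sum_{\fP\mid\fP_i}e(\fP|\fP_i)f(\fP|\fP_i)=[K:F_i]=[G_i:H]$ forces the number of summands, each $\geq\ell$, to be at most $[G_i:H]/\ell$. Letting $T_i$ denote the number of $K$-primes assigned to index $i$ and $s_i$ the total number of good primes in $F_i$, this gives $s_i\geq\ell T_i/[G_i:H]$; since $\sum_i T_i\geq T_\ell(K/F)$, the weighted average with weights $[G_i:H]$ yields
\[
\max_i s_i\;\geq\;\frac{\sum_i[G_i:H]\,s_i}{\sum_i[G_i:H]}\;\geq\;\frac{\ell\,T_\ell(K/F)}{\ell\,\Delta_\ell(K/F)}\;=\;\frac{T_\ell(K/F)}{\Delta_\ell(K/F)}.
\]
Applying the Connell--Sussman bound (discussed in Section~\ref{sec:2}) to $K/F_{i^*}$ for the maximizer $i^*$ yields $\rk_\ell\,\Cl(K)\geq s_{i^*}-\rk_\ell\,\cO_K^*+\rk_\ell\,\cO_{F_{i^*}}^*-e_\ell(K/F_{i^*})$; the monotonicity inequalities $\rk_\ell\,\cO_{F_{i^*}}^*\geq\rk_\ell\,\cO_F^*$ (each place of $F$ extends to a place of $F_{i^*}$, and $\mu_\ell\subseteq F$ implies $\mu_\ell\subseteq F_{i^*}$) and $e_\ell(K/F_{i^*})\leq e_\ell(K/F)$ (since $[K:F_{i^*}]\mid[K:F]$) then deliver the stated bound.

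The principal obstacle is the local step, where strong $\ell$-divisibility must be leveraged so that a single inertia element $\sigma$ controls the ramification of \emph{every} prime of $K$ above $\fP_i$ uniformly. This uniformity is precisely what distinguishes the strong condition from the ordinary one and what enables counting primes in $K$; the fixed-point-free action argument on cosets then neatly couples the $\ell$-factor in $\Delta_\ell(K/F)$ to the $\ell$-factor in the bound $[G_i:H]/\ell$ on primes of $K$ above each good $\fP_i$.
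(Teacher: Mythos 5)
Your proof is correct and follows essentially the same route as the paper: pick a minimizing family of witnesses $G_i$ to strong $\ell$-divisibility, assign each ramified prime $\fP$ of $K$ to an intermediate field $F_i$ whose prime $\fP_i$ below it becomes an $\ell$\textsuperscript{th} power in $K$, use the $[G_i:H]/\ell$-to-one bound on fibers with a weighted pigeonhole to isolate a single $F_{i^*}$ carrying at least $T_\ell(K/F)/\Delta_\ell(K/F)$ such primes, and then apply Lemma~\ref{lem:consus} to $K/F_{i^*}$ together with the Kummer-theoretic bound $\rk_\ell\lit(F_{i^*}^*\cap K^{*\ell}/F_{i^*}^{*\ell})\leq e_\ell(K/F)$ and the monotonicity of $\rk_\ell\lit\cO^*$. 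The one place you diverge in detail is the local step: where the paper argues by contradiction via Sylow conjugacy inside the inertia group, you show directly that $g\sigma g^{-1}$ acts fixed-point-freely on $gI_ig^{-1}/(gI_ig^{-1}\cap H)$ so that every ramification index over $\fP_i$ is divisible by $\ell$; the two arguments are interchangeable and rest on the same use of $\cl_{G_i}(\sigma)\cap H=\emptyset$.
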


Note that for abelian number fields, Theorem~\ref{thm:intro2} can be deduced (and the ``error term" $\rk_{\ell}\,\cO_K^*+\rk_{\ell}\,\cO_F^*-e_\ell(K/F)$ can be improved; see \cite{furuta}) by computing the relative genus number of the fixed field of the Sylow $\ell$-subgroup of $\Gal(K/F)$.

To get some intuition as to whether Theorem~\ref{thm:intro2} can be expected to improve on Theorem~\ref{thm:intro1}, let us look at the simplest case: normal extensions. Again, this corollary is not proved in-text. It is a consequence of Proposition~\ref{prop:Galois} in the appendix.

\begin{corollary}Suppose $\Gal(K/F)=G/1$, and let $n$ be the number of subgroups of $G$ or order $\ell$. Then $\rk_\ell\,\Cl(K)\geq T_\ell(K/F)/n-\rk_{\ell}\,\cO_K^*+\rk_{\ell}\,\cO_F^*-e_\ell(K/F)$.\end{corollary}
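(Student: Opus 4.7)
The plan is to derive the corollary directly from Theorem~\ref{thm:intro2}, so everything reduces to showing that when $\Gal(K/F) = G/1$, the pair $G/1$ is strongly $\ell$-divisible with $\Delta_\ell(G/1) \leq n$, where $n$ is the number of order-$\ell$ subgroups of $G$. Granted this, one has $T_\ell(K/F)/\Delta_\ell(K/F) \geq T_\ell(K/F)/n$, and the inequality in the corollary is just the inequality of Theorem~\ref{thm:intro2} with this weakening.

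To verify strong $\ell$-divisibility, unpack Definition~\ref{def:Divisible} with $H = 1$. The hypotheses on $\sigma$ become $\sigma \neq 1$, $\sigma^\ell = 1$, and $|\sigma|$ is a power of $\ell$, which together force $|\sigma| = \ell$. Moreover, since $H = 1$ and $\sigma \neq 1$, the condition $\cl_{G_i}(\sigma) \cap H = \emptyset$ is automatic. So the definition reduces to requiring a family of subgroups $G_1, \dots, G_\delta \leq G$ such that every element of order $\ell$ in $G$ lies in some $G_i$.

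The obvious choice is to take $G_1, \dots, G_n$ to be precisely the $n$ subgroups of $G$ of order $\ell$. Any $\sigma \in G$ of order $\ell$ generates one of these subgroups and so lies in it, fulfilling the covering requirement. With this choice,
\[
\frac{1}{\ell}\sum_{i=1}^{n}[G_i:1] = \frac{1}{\ell}\sum_{i=1}^{n}\ell = n,
\]
which gives $\Delta_\ell(G/1) \leq n$ and therefore $K/F$ is strongly $\ell$-divisible. Plugging this into Theorem~\ref{thm:intro2} and using $1/\Delta_\ell(K/F) \geq 1/n$ yields exactly the stated bound.

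The proof is essentially bookkeeping on the definition; there is no genuine obstacle. The only point that requires care is recognizing that the two conditions ``$\sigma^\ell \in H$'' and ``$|\sigma|$ is a power of $\ell$'' collapse to ``$|\sigma| = \ell$'' in the case $H = 1$, and that the conjugacy-class condition in Definition~\ref{def:Divisible} is automatically satisfied when $H$ is trivial. This is presumably what Proposition~\ref{prop:Galois} in the appendix establishes (possibly in slightly greater generality), and the corollary then follows immediately.
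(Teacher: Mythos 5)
Your proof is correct and matches the paper's route: Proposition~\ref{prop:Galois} establishes $\Delta_\ell(K/F)\leq n$ precisely by taking the $G_i$ to be the $n$ subgroups of order $\ell$ (equivalently $\langle\sigma_1\rangle,\dots,\langle\sigma_n\rangle$), after which the corollary is Theorem~\ref{thm:intro2} with $\Delta_\ell$ replaced by the weaker bound $n$. The only cosmetic difference is that the paper also records the alternative bound $\frac{1}{\ell}|\langle\sigma_1,\dots,\sigma_n\rangle|$, which the corollary does not use.
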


Under the setup above, let us make the heuristic assumption that primes in $F$ with ramification index $\ell$ lie below an average of $\sqrt{|G|/\ell}$ primes in $K$. Then such primes contribute $\sqrt{|G|/\ell}/n$ to the lower bound in Theorem~\ref{thm:intro2}, while only contributing $1$ to the lower bound in Theorem~\ref{thm:intro1}. So as we range over normal extensions of $F$ with Galois group $G$, we might expect improvement with Theorem~\ref{thm:intro2} when $n$ is less than $\sqrt{|G|/\ell}$. (This implicitly uses another heuristic assumption that ramification is, on average, dominated in such number fields by minimal ramification indices. In other words, we expect a ramification index divisible by $\ell$ to equal $\ell$. See Section 4 in \cite{malle2}.)

Section \ref{sec:3} is devoted to consequences of Theorem~\ref{thm:intro1} regarding Hilbert class field towers. Recall that the class field tower of a number field $K$ is the tower of field extensions $K=K_0\subseteq K_1\subseteq K_2\subseteq\cdots$, where $K_i$ is the maximal abelian, unramified extension of $K_{i-1}$. Similarly, to obtain the $\ell$-class field tower we we take maximal abelian, unramified $\ell$-extensions. (See \cite{lemmermeyer2} for a survey.) In 1964, Golod and Shafarevich proved that there are number fields with infinite class field towers. In particular, such number fields have no finite extension with trivial class group, answering the question of Furtw{\"a}ngler in the negative.

\begin{theorem}[Golod--Shafarevich \cite{golod}]\label{thm:golod}Let $K$ be a number field such that $\emph{rk}_\ell\,\Cl(K)>2+2\sqrt{\emph{rk}\,\cO_K^*+1}$ for some prime number $\ell$. Then the $\ell$-class field tower of $K$ is infinite.\end{theorem}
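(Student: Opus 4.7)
The plan is to argue by contradiction using the cohomology of finite $\ell$-groups. Suppose the $\ell$-class field tower of $K$ terminates at some $K_n = L$. Then $L/K$ is Galois with $G := \Gal(L/K)$ a finite $\ell$-group (each stage of the tower is an unramified elementary abelian $\ell$-extension), and by maximality $L$ is the maximal unramified pro-$\ell$ extension of $K$. The goal is to deduce an upper bound on $\rk_\ell\,\Cl(K)$ contradicting the hypothesis.

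Three inputs, all standard in the cohomology of pro-$\ell$ groups, combine to give the bound. First, class field theory identifies the maximal unramified elementary abelian $\ell$-extension of $K$ with the field fixed by $G^\ell[G,G]$, so that $d(G) := \dim_{\bF_\ell} H^1(G,\bF_\ell) = \rk_\ell\,\Cl(K)$. Second, the Shafarevich formula, obtained from global Poitou-Tate duality applied to the trivial module $\bF_\ell$ with ramification confined to primes above $\ell$ (of which there are none here, as $L/K$ is unramified), yields
$$r(G) := \dim_{\bF_\ell} H^2(G,\bF_\ell) \leq d(G) + \rk\,\cO_K^* + 1,$$
the $+1$ accounting for the possible presence of $\mu_\ell$ in $K$. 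Third, the Golod-Shafarevich inequality asserts that for any nontrivial finite $\ell$-group $G$,
$$r(G) > \tfrac{1}{4} d(G)^2,$$
proved by tracking the Hilbert series of the powers of the augmentation ideal in the completed group algebra $\bF_\ell[[G]]$ against the dimensions imposed by a minimal presentation.

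Putting these together yields $d(G)^2/4 < d(G) + \rk\,\cO_K^* + 1$, which on solving the resulting quadratic in $d(G)$ gives $d(G) \leq 2 + 2\sqrt{\rk\,\cO_K^* + 1}$ (up to the precise form of Shafarevich's inequality used), contradicting the hypothesis. The main obstacle is the Golod-Shafarevich inequality itself, which is the deep combinatorial input and whose proof requires careful filtration arguments in $\bF_\ell[[G]]$. The cohomological bound on $r(G)$ is nontrivial but follows relatively directly from the global Euler characteristic formula together with the fact that $L/K$ is everywhere unramified, so no local cohomology contributions from finite places arise.
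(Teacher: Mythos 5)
The paper never proves this statement---it is imported verbatim from the literature (Golod--Shafarevich together with Shafarevich's relation-rank bound and the Gasch\"utz--Vinberg refinement of the inequality)---so there is no in-paper argument to compare against. Your outline is the standard proof and its architecture is correct: a terminating tower forces $G=\Gal(L/K)$ to be a finite $\ell$-group with $L$ the maximal unramified pro-$\ell$ extension, Artin reciprocity plus the Burnside basis theorem give $d(G)=\dim_{\bF_\ell}H^1(G,\bF_\ell)=\rk_\ell\,\Cl(K)$, the global Euler--Poincar\'e/Shafarevich formula bounds $r(G)-d(G)$ by unit-group data, and $r(G)>d(G)^2/4$ for a nontrivial finite $\ell$-group closes the loop. (A minor slip: each stage $K_{i+1}/K_i$ of the tower is abelian, not necessarily elementary abelian; this does not affect anything.)

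The one point that does not close as written is the constant. From $d^2/4<r\leq d+\rk\,\cO_K^*+1$ the quadratic formula yields $d<2+2\sqrt{\rk\,\cO_K^*+2}$, which is strictly weaker than the stated conclusion $d\leq 2+2\sqrt{\rk\,\cO_K^*+1}$. To recover the theorem exactly as quoted you need the relation bound in the form $r(G)-d(G)\leq \rk\,\cO_K^*=r_1+r_2-1$, i.e.\ without the extra $+1$ that you attach for the possible presence of $\mu_\ell$ in $K$; the version of Shafarevich's formula carrying that $+1$ (which is the one that holds unconditionally, e.g.\ when $\mu_\ell\subset K$) only delivers the weaker constant. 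Your parenthetical ``up to the precise form of Shafarevich's inequality used'' is precisely where the proof must be pinned down: either cite or establish the sharper relation-rank bound in the everywhere-unramified setting, or acknowledge that the argument as assembled proves the theorem with $\rk\,\cO_K^*+2$ under the radical. Everything else in the sketch is the standard, correct route.
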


Section \ref{sec:3} begins by applying the Golod--Shafarevich theorem to $\ell$-divisible extensions. The result, stated below, is analogous to a theorem of Brumer \cite{brumer}, who used a lower bound similar to Theorem~\ref{thm:roqzass} (due to him and Rosen \cite{rosen}) to apply the Golod--Shafarevich theorem to number fields with highly composite discriminants. But because prime ramification need not contribute to the perfect $\ell^\text{th}$ powers counted by Theorem~\ref{thm:roqzass}, Brumer could only conclude infinitude of the class field tower when $K/\bQ$ is normal. Theorem~\ref{thm:intro1} allows us to extend Brumer's result to many non-normal extensions. 

In the corollary below, $\omega(n)$ denotes the number of distinct prime factors of a nonzero integer $n$.

\begin{corollary}\label{cor:brumer}Suppose $K/\bQ$ is $\ell$-divisible for all primes $\ell$. Let $D$ be the discriminant of $K$, and let $n$ be the degree of its Galois closure over $\bQ$. If $\omega(\Delta)\geq 4n\omega([K:\bQ])$, then the class field tower of $K$ is infinite.\end{corollary}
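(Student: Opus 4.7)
The plan is to apply Theorem~\ref{thm:intro1} for a well-chosen prime $\ell$ and then invoke the Golod--Shafarevich theorem (Theorem~\ref{thm:golod}) to force infinitude of the $\ell$-class field tower of $K$, hence of the full class field tower.

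I would begin by counting ramified primes: each of the $\omega(D)$ prime divisors $p$ of $D$ ramifies in $K/\bQ$, so some prime $\fp\subseteq\cO_K$ above $p$ has $e_\fp>1$. For any prime $\fP$ of the Galois closure $L/\bQ$ above $\fp$, the index $e_\fp$ divides the order $|I_\fP|$ of the inertia group, which in turn divides $n=[L:\bQ]$; hence every prime factor of every such $e_\fp$ divides $n$. Partitioning the $\omega(D)$ ramified primes by a choice of prime $\ell$ dividing some $e_\fp$ above them, the pigeonhole principle yields an $\ell$ with
$$t_\ell(K/\bQ)\;\geq\;\frac{\omega(D)}{\omega([K:\bQ])}\;\geq\;4n,$$
after sharpening the bucketing to range over primes dividing $[K:\bQ]$ (the naive pigeonhole only gives $\omega(D)/\omega(n)$, so this step warrants a finer analysis of inertia in $L$, likely exploiting the $\ell$-divisibility hypothesis for every prime $\ell$).

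For this $\ell$, Theorem~\ref{thm:intro1} gives
$$\rk_\ell\,\Cl(K)\;\geq\;\frac{t_\ell(K/\bQ)}{\delta_\ell(K/\bQ)}-\rk_\ell\,\cO_K^*+\rk_\ell\,\cO_\bQ^*-e_\ell(K/\bQ).$$
Using $\delta_\ell(K/\bQ)\leq n$ (from the trivial enumeration of subgroups in Definition~\ref{def:divisible}), together with the standard estimates $\rk_\ell\,\cO_K^*\leq[K:\bQ]$ and $e_\ell(K/\bQ)\leq\log_2[K:\bQ]$, and combining with $t_\ell(K/\bQ)\geq 4n\geq 4\delta_\ell(K/\bQ)$, I would derive $\rk_\ell\,\Cl(K)>2+2\sqrt{\rk\,\cO_K^*+1}$ and conclude by Theorem~\ref{thm:golod}.

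The hard part is the constant bookkeeping. Specifically, one must pin down a sharp upper bound on $\delta_\ell(K/\bQ)$ in the generality required---the trivial $\delta_\ell\leq n$ may need refinement---and carefully justify the improved pigeonhole bound $\omega(D)/\omega([K:\bQ])$ in place of the naive $\omega(D)/\omega(n)$, so that the hypothesis $\omega(D)\geq 4n\omega([K:\bQ])$ matches the Golod--Shafarevich threshold precisely after absorbing the error terms $\rk_\ell\,\cO_K^*-\rk_\ell\,\cO_\bQ^*+e_\ell(K/\bQ)$.
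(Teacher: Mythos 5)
Your outline (pigeonhole over ramified primes, then Theorem~\ref{thm:intro1}, then Golod--Shafarevich) is the paper's route, but the constant bookkeeping you defer is exactly where the argument lives, and as written it does not close. With $\delta_\ell(K/\bQ)\leq n$ and $t_\ell(K/\bQ)\geq 4n$ you only get $t_\ell/\delta_\ell\geq 4$, whereas after subtracting the error terms you must beat the Golod--Shafarevich threshold $2+2\sqrt{\rk\,\cO_K^*+1}$; since $\rk_\ell\,\cO_K^*$ can be as large as $[K:\bQ]$ and $e_\ell(K/\bQ)$ as large as $\log_2[K:\bQ]$, you actually need $t_\ell/\delta_\ell$ on the order of $4[K:\bQ]$, not $4$. (Already for $K$ imaginary quadratic your numbers give $\rk_2\,\Cl(K)\geq 3$ against a required threshold of $4$.) The missing ingredient is the sharper bound $\delta_\ell(K/\bQ)\leq [L:K]=n/[K:\bQ]$, which the paper proves as a separate lemma: if $\sigma\in H$ has order $\ell$ and $G_1$ witnesses $\ell$-divisibility for $\sigma$ via some $\tau$ with $\cl_{G_1}(\tau)\cap H=\emptyset$, then the same $G_1$ works for every nontrivial power of $\sigma$, so one needs at most one subgroup per order-$\ell$ subgroup of $H$ plus the group $G$ itself for conjugacy classes disjoint from $H$; this gives $\delta_\ell(G/H)\leq 1+(|H|-1)/(\ell-1)\leq |H|$. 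With that, $t_\ell/\delta_\ell\geq 4n/[L:K]=4[K:\bQ]$, which absorbs all error terms.

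Your second flagged step is the right idea but also left unproven: the refined pigeonhole over primes dividing $[K:\bQ]$ (rather than $n$) follows from Proposition~\ref{prop:mustdivide}, i.e., from Lemma~\ref{lem:syls}: if $\ell$ divides some ramification index then $t_\ell(K/\bQ)\geq 1$, and $\ell$-divisibility of $K/\bQ$ then forces $\ell\mid[K:\bQ]$, so every prime dividing any ramification index lies among the $\omega([K:\bQ])$ prime divisors of the degree. You correctly guessed that the divisibility hypothesis for all $\ell$ is what makes this work, but without these two lemmas the stated hypothesis $\omega(D)\geq 4n\,\omega([K:\bQ])$ cannot be matched to the Golod--Shafarevich threshold.
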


A more precise (though harder to compute) bound than $4n\omega([K:\bQ])$ is proved in Corollary~\ref{cor:inftower}, which also generalizes this result to relative extensions. Based on this we might expect that if $G/H$ is $\ell$-divisible for each prime $\ell$, then for any number field $F$, the $G/H$-extensions of $F$ should be dominated by number fields with infinite class field towers because the average number of primes dividing the relative discriminant grows without bound. Such a result appears difficult to prove for arbitrary $G$ as we are left without the aid of class field theory. We can prove this, however, when $G$ is nilpotent (in which case the $\ell$-divisibility hypothesis is automatic by Corollary~\ref{cor:nilpotent}). To state the resulting theorem, we must fix some notation.

\begin{notation}\label{not:malle}Given groups $H\leq G$ and an element $\sigma\in G$, let $n_\sigma$ denote the number of orbits when $\langle\sigma\rangle$ acts on the cosets $G/H$ via left multiplication, and set $a(G/H) = \max_\sigma(1/([G:H]-n_\sigma))$. Also, given $x\in\bR$ and a number field $F$, let $N(F,G/H,x)$ be the number of $G/H$-extensions of $F$ (in some fixed algebraic closure) for which the relative discriminant has absolute norm at most $x$ in magnitude. Among such extensions, let $N_0(F,G/H,x)$ count those with finite class field towers.\end{notation}

\begin{theorem}\label{thm:asymptotic}Let $G/H$ be groups with $G$ nilpotent of nilpotency class $n$, and let $F$ be an extension of $\bQ$ of degree $d$. There is a constant $c(d)$ (depending only on $d$) such that for large $x$, $$N_0(F,G/H,x)<\frac{c(d)x^{a(G/H)}(\log\log x)^{4nd\omega(d)-1}}{\log x}.$$\end{theorem}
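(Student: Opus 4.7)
The plan is to synthesize Theorem~\ref{thm:intro1} with an analytic count of $G/H$-extensions of $F$ that have restricted ramification. Because $G$ is nilpotent of class $n$, Corollary~\ref{cor:nilpotent} gives $\delta_\ell(K/F)\leq n$ for every prime $\ell$ and every $G/H$-extension $K/F$. Substituting into Theorem~\ref{thm:intro1} I bound $t_\ell(K/F)$ in terms of $\rk_\ell\,\Cl(K)$, the unit-group ranks, and $e_\ell(K/F)$. If the class field tower of $K$ is finite, then Theorem~\ref{thm:golod} forces $\rk_\ell\,\Cl(K)<2+2\sqrt{\rk\,\cO_K^*+1}$ for every $\ell$; combined with $\rk\,\cO_K^*<[K:\bQ]=[G:H]d$, each $t_\ell(K/F)$ is then bounded in terms of $G,H,d,n$. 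Summing over the primes $\ell\mid[K:F]$ controls the number of primes of $F$ ramified in $K/F$, hence controls $\omega(N(\fd_{K/F}))$. A careful accounting of the constants (this is the content of Corollary~\ref{cor:inftower}) yields $\omega(N(\fd_{K/F}))<M$ with $M=4nd\omega(d)$.

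With ramification thus restricted, the remaining task is to count $G/H$-extensions of $F$ with $N(\fd_{K/F})\leq x$ and $\omega(N(\fd_{K/F}))\leq M$. My approach is to organize the count by the squarefree radical $D$ of $N(\fd_{K/F})$:
$$N_0(F,G/H,x)\leq\sum_{\substack{D\leq x,\,D\text{ squarefree}\\ \omega(D)\leq M}}\#\bigl\{K\,:\,\mathrm{rad}(N(\fd_{K/F}))\mid D,\,N(\fd_{K/F})\leq x\bigr\}.$$
For nilpotent $G$, a Malle-style upper bound with ramification restricted to primes of $F$ above a fixed squarefree $D$ bounds the inner count uniformly by $\ll_{G,H,d} x^{a(G/H)}$; this is available Sylow-by-Sylow via class field theory, since the maximal pro-$p$ extension of $F$ unramified outside a finite set has well-controlled growth. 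Combining this with Landau's classical estimate $\#\{D\leq x\,:\,\omega(D)\leq M\}\ll x(\log\log x)^{M-1}/\log x$ (best packaged through a Selberg--Delange argument on the Dirichlet series $\sum_K N(\fd_{K/F})^{-s}$ weighted by $y^{-\omega(N(\fd_{K/F}))}$) gives $N_0(F,G/H,x)\ll x^{a(G/H)}(\log\log x)^{M-1}/\log x$, which is the claimed bound after substituting $M=4nd\omega(d)$.

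The main obstacle lies in the analytic counting step: the Malle-type upper bound with restricted ramification must be uniform (or only very mildly non-uniform) in the choice of $D$ to preserve the power $x^{a(G/H)}$—i.e., one must avoid the extra $\epsilon$ in exponents that $x^{a(G/H)+\epsilon}$-style bounds would produce. For nilpotent $G$ this uniformity is accessible by decomposing into Sylow components, but the implicit constants require careful tracking, especially in the relative setting over $F$ rather than $\bQ$, where both the relationship between absolute and relative discriminants and the possible dependence of $a(G/H)$ on the arithmetic of $F$ must be monitored.
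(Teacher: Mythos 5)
Your opening step---combining Theorem~\ref{thm:intro1} with $\delta_\ell\le n$ from Corollary~\ref{cor:nilpotent} and Golod--Shafarevich to force $\omega_F(\fD)<M$ for every $G/H$-extension $K/F$ with finite class field tower---is exactly the paper's route (it is Corollary~\ref{cor:inftower}). The gap is in the counting step, and it is fatal as written. You sum, over all squarefree $D\le x$ with $\omega(D)\le M$, an inner count that you bound by $x^{a(G/H)}$; that sum is $\ll x^{1+a(G/H)}(\log\log x)^{M-1}/\log x$, far larger than claimed. Even if the inner count were $O(1)$ (which is what is actually true and provable), summing over all squarefree radicals $D\le x$ with $\omega(D)\le M$ still gives $\asymp x(\log\log x)^{M-1}/\log x$, which misses the exponent $a(G/H)\le 1$ entirely. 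The point is that $x^{a(G/H)}$ cannot come from the per-radical count; it comes from a constraint on the admissible discriminant norms themselves: for a $G/H$-extension, $N(\fD)$ must have the form $s^{1/a}t$ with $\omega(s)=\omega(N(\fD))$, i.e.\ every prime divides the norm to exponent at least $1/a$. The set of such integers up to $x$ with at most $M$ prime factors has size $\ll x^{a}(\log\log x)^{M-1}/\log x$; this is the paper's Lemma~\ref{lem:asymptotic}, a Landau-type count of powered integers, not of squarefree radicals.

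The correct architecture is: (i) show that for each fixed value $D$ of the discriminant norm, the number of $G/H$-extensions with that norm and finite class field tower is at most a constant $c_0(d)$; (ii) observe that the admissible $D$ lie in the restricted set above and multiply the two bounds. Step (i) is itself nontrivial and is where nilpotence and the finiteness of the tower are used a second time: one climbs a normal series $H=H_0\trianglelefteq\cdots\trianglelefteq H_n=G$ with prime indices, and at each step the number of degree-$\ell$ cyclic extensions of an intermediate field $K_i$ unramified outside the $\le M$ ramified primes is controlled by a ray class group computation (the paper's Lemma~\ref{lem:numfieldbound}) whose answer depends on $\rk_\ell\,\Cl(K_i)$---and that rank is bounded by a function of $d$ only because $K_i$, as a subfield of $K$, also has a finite class field tower, so Golod--Shafarevich applies to it as well. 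Your sketch supplies neither half: the ``uniform Malle-style bound $\ll x^{a(G/H)}$ for fixed radical'' is both the wrong shape and not justified, and the special multiplicative structure of the discriminant norms, which is the sole source of the exponent $a(G/H)$, never enters your argument.
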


The weak Malle conjecture asserts that for any $F$ and $G/H$ there exists a constant $c(F,G)$ such that $N(F,G/H,x)> c(F,G)x^{a(G/H)}$ \cite{malle}. This was proved by Kl{\"u}ners and Malle for nilpotent groups in the special case where $H=1$, meaning the case of Galois extensions \cite{kluners}. In particular, we obtain the following density result.

\begin{corollary}\label{cor:density}With notation as in Theorem \ref{thm:asymptotic}, there is a constant $c(F,G)$ (depending only on $F$ and $G$) such that for large $x$, $$\frac{N_0(F,G/1,x)}{N(F,G/1,x)}<\frac{c(F,G)(\log\log x)^{4nd\omega(d)-1}}{\log x}.$$\end{corollary}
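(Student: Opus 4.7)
The plan is to combine the upper bound on $N_0(F,G/1,x)$ supplied by Theorem~\ref{thm:asymptotic} with a matching lower bound on $N(F,G/1,x)$ coming from the Klüners--Malle resolution of the weak Malle conjecture for nilpotent Galois extensions. Since the denominator in the statement ranges over Galois (i.e. $H=1$) extensions, and these are exactly the extensions for which the weak Malle conjecture is known unconditionally in the nilpotent case, the two estimates should sandwich the ratio directly.

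First I would specialize Theorem~\ref{thm:asymptotic} to $H=1$. Since $G$ is nilpotent of class $n$ by hypothesis, the theorem applies and gives, for large $x$,
$$N_0(F,G/1,x) < \frac{c(d)\, x^{a(G/1)}(\log\log x)^{4nd\omega(d)-1}}{\log x},$$
where $c(d)$ depends only on $d = [F:\bQ]$, hence only on $F$. Next, by the work of Klüners and Malle~\cite{kluners}, there is a constant $c_1(F,G)>0$ such that for all sufficiently large $x$,
$$N(F,G/1,x) > c_1(F,G)\cdot x^{a(G/1)}.$$
Dividing the first inequality by the second, the factors $x^{a(G/1)}$ cancel and the corollary follows with $c(F,G) := c(d)/c_1(F,G)$, which depends only on $F$ and $G$.

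The main obstacle is really just bookkeeping: confirming that both cited results apply in the same setting. Theorem~\ref{thm:asymptotic} was proved for nilpotent $G$ and arbitrary $H$, whereas the Klüners--Malle lower bound is available precisely in the Galois case $H=1$. This asymmetry between numerator and denominator is exactly why the corollary is stated only for $G/1$-extensions; extending such a density result to non-Galois $G/H$-extensions would require Malle-type lower bounds for more general $H$, which are not currently known even for nilpotent $G$.
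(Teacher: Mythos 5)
Your proof is correct and matches the paper's approach: the paper proves Corollary~\ref{cor:density} (as the second inequality of Theorem~\ref{thm:malle}) by exactly this division of the $N_0$ upper bound by the Kl\"uners--Malle lower bound $N(F,G/1,x)>c_1(F,G)x^{a(G/1)}$, with the $x^{a(G/1)}$ factors cancelling. Your closing remark about why the statement is restricted to $H=1$ is also the correct reason.
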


As with Corollary~\ref{cor:brumer}, a more precise (though harder to compute) exponent than $4nd\omega(d)-1$ can be found in Section \ref{sec:3}. Both the previous theorem and its corollary are proved as Theorem~\ref{thm:malle}.

Other density results regarding infinite class field towers can be found for quadratic fields in \cite{gerth,gerth2,mouhib}, for cyclotomic fields in \cite{shparlinski}, and for cyclic extensions of prime degree in \cite{gerthcyclic}.

\section{Proofs of Theorems \ref{thm:intro1} and \ref{thm:intro2}}\label{sec:2}

The following notation is used throughout this section.

\begin{notation}For a prime number $\ell$ and number fields $K/F$, $s_\ell(K/F)$ denotes the number of prime ideals in $F$ whose ramification indices (in $K$) are all divisible by $\ell$; $t_\ell(K/F)$ denotes the number of prime ideals in $F$ with at least one ramification index divisible by $\ell$; $T_\ell(K/F)$ denotes the number of prime ideals in $K$ with ramification index divisible by $\ell$; and $e_\ell(K/F)$ is the exponent of $\ell$ in $[K:F]$.\end{notation}

Our strategy for proving Theorem~\ref{thm:intro1} is to show that if $K/F$ is $\ell$-divisible and $p$ is a prime ideal of $F$ that gets counted by $t_\ell(K/F)$, then there is some intermediate field $F'$ containing a prime $\fp$ over $p$ that gets counted by $s_\ell(K/F')$. We would then like to apply a result like Theorem~\ref{thm:roqzass} to $K/F'$. Unfortunately, the known generalization of Theorem~\ref{thm:roqzass} to relative extensions (Theorem 1 in \cite{connell}) requires $p^n$ to be principal for some $n$ coprime to $\ell$. Even if we assume this of $p$, we cannot conclude the same of some $\fp$ over $p$ in an intermediate field. So we begin by removing this requirement with the next lemma. We note the additional benefit of potentially increasing the number of primes counted in a relative extension when compared to Theorem 1 in \cite{connell}.

\begin{lemma}\label{lem:consus}For a prime number $\ell$ and number fields $K/F$, $$\rk_\ell\,\Cl(K)\geq s_\ell(K/F)-\rk_\ell\,\cO_K^*+\rk_\ell\,\cO_F^*-\rk_\ell\!\left(\frac{F^*\cap K^{*\ell}}{F^{*\ell}}\right).$$\end{lemma}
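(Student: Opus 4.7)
The plan is to produce, for each of the $s := s_\ell(K/F)$ primes $p_i$ of $F$ whose ramification indices in $K$ are all divisible by $\ell$, an explicit $\ell$-torsion class in $\Cl(K)$, and to lower-bound the $\bF_\ell$-dimension of the span of these classes by analyzing the kernel of the resulting map $\bF_\ell^s \to \Cl(K)[\ell]$. Concretely, for each $p_i$ set $\fA_i = \prod_{\fP \mid p_i} \fP^{e(\fP|p_i)/\ell}$, so that $\fA_i^\ell = p_i\cO_K$ is principal and $[\fA_i] \in \Cl(K)[\ell]$. Let $\phi\colon \bF_\ell^s \to \Cl(K)[\ell]$ be the $\bF_\ell$-linear map $(n_i) \mapsto [\prod \fA_i^{n_i}]$, with image $V$ and kernel $K_1$. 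Since $\Cl(K)$ is finite, $\rk_\ell\,\Cl(K) = \dim_{\bF_\ell} \Cl(K)[\ell] \geq \dim V = s - \dim K_1$, and the lemma reduces to proving
\[ \dim K_1 \;\leq\; \rk_\ell\,\cO_K^* - \rk_\ell\,\cO_F^* + \rk_\ell\bigl((F^* \cap K^{*\ell})/F^{*\ell}\bigr). \]

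An $s$-tuple $(n_i)$ lies in $K_1$ precisely when $\prod \fA_i^{n_i}$ is principal in $K$. Raising such a relation to the $\ell$-th power and using the torsion-freeness of $\Divi(K)$ to cancel exponents (so $(\beta)^\ell = (\gamma)^\ell$ forces $(\beta) = (\gamma)$), this principality condition becomes $\prod p_i^{n_i} \in F^* \cap \cO_K^* K^{*\ell}$. A valuation check then shows that
\[ K_1 \;\hookrightarrow\; (F^* \cap \cO_K^* K^{*\ell})/\cO_F^* F^{*\ell}, \qquad (n_i) \mapsto \prod p_i^{n_i}, \]
is injective: if $\prod p_i^{n_i} = \zeta y^\ell$ with $\zeta \in \cO_F^*$ and $y \in F^*$, comparing $p_j$-valuations on both sides forces $\ell \mid n_j$. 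The remaining task is to bound the $\bF_\ell$-dimension of this target.

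For that I would chain two short exact sequences of $\bF_\ell$-vector spaces together with one injection:
\[ 0 \to \cO_F^* F^{*\ell}/F^{*\ell} \to (F^* \cap \cO_K^* K^{*\ell})/F^{*\ell} \to (F^* \cap \cO_K^* K^{*\ell})/\cO_F^* F^{*\ell} \to 0, \]
\[ 0 \to (F^* \cap K^{*\ell})/F^{*\ell} \to (F^* \cap \cO_K^* K^{*\ell})/F^{*\ell} \to (F^* \cap \cO_K^* K^{*\ell})/(F^* \cap K^{*\ell}) \to 0, \]
\[ (F^* \cap \cO_K^* K^{*\ell})/(F^* \cap K^{*\ell}) \;\hookrightarrow\; \cO_K^*/\cO_K^{*\ell}, \qquad u\alpha^\ell \mapsto u. \]
The first sequence accounts for the $-\rk_\ell\,\cO_F^*$ term (since $\cO_F^* F^{*\ell}/F^{*\ell} \cong \cO_F^*/\cO_F^{*\ell}$), the second splits off the $(F^* \cap K^{*\ell})/F^{*\ell}$ contribution, and the third bounds what is left by $\rk_\ell\,\cO_K^*$. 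The main technical point will be verifying that the last injection is well defined: two decompositions $x = u\alpha^\ell = u'\alpha'^\ell$ satisfy $u/u' = (\alpha'/\alpha)^\ell \in K^{*\ell} \cap \cO_K^*$, so I need the identity $K^{*\ell} \cap \cO_K^* = \cO_K^{*\ell}$. This is another consequence of torsion-freeness of $\Divi(K)$---if $(\beta)^\ell = \cO_K$ then $(\beta) = \cO_K$, so $\beta \in \cO_K^*$---and once it is in place, chaining the three pieces yields the required bound on $\dim K_1$.
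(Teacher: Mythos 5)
There is a genuine gap, and it sits exactly at the point this lemma exists to address. Your construction assumes, in two places, that the primes $p_i$ of $F$ behave as if they were principal. First, $\fA_i^\ell=p_i\cO_K$ is the extension to $\cO_K$ of a prime ideal of $\cO_F$ that need not be principal in $F$ and need not become principal in $K$ either; so $[\fA_i]$ is not in general an element of $\Cl(K)[\ell]$, and your map $\phi\colon\bF_\ell^s\to\Cl(K)[\ell]$ is not well defined. Second, and more seriously, the kernel analysis treats $\prod p_i^{n_i}$ as an element of $F^*$: the relation $\prod\fA_i^{n_i}=\beta\cO_K$ only gives $(\prod p_i^{n_i})\cO_K=\beta^\ell\cO_K$, i.e., the ideal $\prod p_i^{n_i}$ of $F$ capitulates in $K$; it does not produce a generator in $F^*$, so the embedding $K_1\hookrightarrow(F^*\cap\cO_K^*K^{*\ell})/\cO_F^*F^{*\ell}$ is undefined on any $(n_i)$ for which $\prod p_i^{n_i}$ is non-principal in $\cO_F$. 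For a concrete failure, take $s=1$ with the class of $p_1$ generating $\Cl(F)\simeq\bZ/\ell\bZ$ and with $\fA_1$ principal in $K$: then $K_1$ is all of $\bF_\ell^s$, yet $p_1$ admits no generator in $F^*$, and your chain of inequalities never starts.

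The paper's proof avoids this by not working only with ideals supported above the $p_i$: it invokes the Connell--Sussman inequality $\rk_\ell\Cl(K)\geq\rk_\ell\Cl(F)+\rk_\ell((F^*\cap G_K)/G_F)-\cdots$, splits the $s$-dimensional space $\cI_p/\cI_p^\ell$ along the subgroup of combinations that are principal in $F$, bounds the non-principal part by $\rk_\ell\Cl(F)$, and only then runs a unit/Kummer computation like yours on the principal part. Your three exact sequences do correctly reproduce the bound $\rk_\ell\cO_K^*-\rk_\ell\cO_F^*+\rk_\ell((F^*\cap K^{*\ell})/F^{*\ell})$ on that principal part, so the second half of your argument is sound; what is missing is a mechanism (some substitute for the Connell--Sussman inequality) converting the $\ell$-rank of the subgroup of $\Cl(F)$ generated by the $p_i$ into $\ell$-rank of $\Cl(K)$. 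Without it, your argument only proves the lemma under the extra hypothesis that each $p_i$ has order prime to $\ell$ in $\Cl(F)$ --- which is precisely the restriction from Connell--Sussman's Theorem 1 that this lemma is designed to remove.
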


\begin{proof}Let $G_K$ denote the group of elements $\alpha\in K^*$ such that $\alpha\cO_K=\fa^\ell$ for some fractional ideal $\fa\subset K$, and define $G_F$ analogously. In \cite{connell} (the proof of Theorem 1, but stop about halfway down page 527) it is shown that \begin{align}\label{eq:consus}\rk_\ell\,\Cl(K)&\geq \rk_\ell\,\Cl(F)+\rk_\ell\!\left(\frac{F^*\cap G_K}{G_F}\right)\\\nonumber&\hspace{2\parindent}-\rk_\ell\,\cO_K^*+\rk_\ell\,\cO_F^*-\rk_\ell\!\left(\frac{F^*\cap K^{*\ell}}{F^{*\ell}}\right).\end{align} The proof of our lemma amounts to showing that $s_\ell(K/F)$ does not exceed the sum of the first two terms on right-hand side of this inequality.
    
Let $s=s_\ell(K/F)$, let $p_1,...,p_s\subset \cO_F$ denote the prime ideals that become $\ell^\text{th}$ powers in $K$, let $\cI_p$ denote the subgroup of fractional ideals generated by $p_1,...,p_s$, and let $\cP_F$ denote the group of principal fractional ideals in $F$. The canonical exact sequence $$1\to \cI_p\cap \cP_F\to \cI_p\to \frac{\cI_p}{\cI_p\cap \cP_F}\to 1$$ induces an exact sequence of vector spaces over $\bF_\ell$: $$1\to\frac{\cI_p\cap \cP_F}{\cI^\ell_p\cap \cP_F}\to \frac{\cI_p}{\cI_p^\ell}\to\frac{\cI_p}{\cI_p^\ell(\cI_p\cap \cP_F)}\to 1.$$ This implies \begin{equation}\label{eq:sequals}\rk_\ell\!\left(\frac{\cI_p}{\cI_p^\ell(\cI_p\cap \cP_F)}\right)+\rk_\ell\!\left(\frac{\cI_p\cap \cP_F}{\cI^\ell_p\cap \cP_F}\right)=\rk_\ell\!\left(\frac{\cI_p}{\cI_p^\ell}\right),\end{equation} which is precisely $s$ by definition of $\cI_p$. Let us consider the two dimensions on the left-hand side above one at a time in the order they appear.

First let $\varphi:\cI_p\to\Cl(K)$ denote the map sending an ideal to its ideal class. We compose this with the quotient map restricted to the image of $\varphi$: $$\cI_p\xrightarrow{\varphi}\varphi(\cI_p)\to\frac{\varphi(\cI_p)}{\varphi(\cI_p)^\ell}.$$ An ideal $a\in \cI_p$ belongs to the kernel of the composition above if and only if $a$ and $b^\ell$ belong to the same ideal class for some $b\in \cI_p$. Writing $a=b^\ell(a/b^\ell)$, we see that the kernel is precisely $\cI_p^\ell(\cI_p\cap \cP_F)$. So we get an injection $$\frac{\cI_p}{\cI_p^\ell(\cI_p\cap \cP_F)}\hookrightarrow \frac{\varphi(\cI_p)}{\varphi(\cI_p)^\ell},$$ which implies \begin{equation}\label{eq:rkcl}\rk_\ell\!\left(\frac{\cI_p}{\cI_p^\ell(\cI_p\cap \cP_F)}\right)\leq\rk_\ell\,\varphi(\cI_p)\leq\rk_\ell\,\Cl(F).\end{equation} 
    
Second, if $\alpha\in F$ is such that $\alpha\cO_F\in \cI_p$ then $\alpha\cO_K=\fa^\ell$ for some nonzero fractional ideal $\fa\subset K$; indeed, generators for $\cI_p$ all become $\ell^\text{th}$ powers in $K$. This is the defining condition for the group $G_K$, so $\alpha\in G_K$. Since a generator for $\alpha\cO_F$ is defined up to multiplication by a unit, we have an injection \begin{equation}\label{eq:inj}\cI_p\cap \cP_F\hookrightarrow \frac{F^*\cap G_K}{\cO_F^*}\end{equation} given by $\alpha\cO_F\mapsto \alpha\cO_F^*$. And because $\cO_F^*\subset G_F$, we have a quotient map \begin{equation}\label{eq:quo}\frac{F^*\cap G_K}{\cO_F^*}\to \frac{F^*\cap G_K}{G_F}.\end{equation} By definition of $G_F$ (and unique factorization of ideals), the kernel of the composition of (\ref{eq:inj}) and (\ref{eq:quo}) is $\cI_p^\ell\cap \cP_F$. Thus $(\cI_p\cap \cP_F)/(\cI_p^\ell\cap \cP_F)$ injects into $(F^*\cap G_K)/G_F$, implying \begin{equation}\label{eq:rkf}\rk_\ell\!\left(\frac{\cI_p\cap \cP_F}{\cI_p^\ell\cap \cP_F}\right)\leq \rk_\ell\!\left(\frac{F^*\cap G_K}{G_F}\right)\!.\end{equation}

Combining (\ref{eq:sequals}), (\ref{eq:rkcl}), and (\ref{eq:rkf}) shows that $$\rk_\ell\,\Cl(F)+\rk_\ell\!\left(\frac{F^*\cap G_K}{G_F}\right)\geq \rk_\ell\!\left(\frac{\cI_p}{\cI_p^\ell}\right) = s_\ell(K/F).$$ This, in turn, combines with (\ref{eq:consus}) to complete the proof.\end{proof}

Connell and Sussman demonstrate by example how a well-chosen base field $F$ can significantly increase the value of their lower bound due to the appearance of $\rk_\ell\,\Cl(F)$ in (\ref{eq:consus}). We have removed this term in Lemma~\ref{lem:consus}, but it is still important to allow the base field to vary in our result below. This is because $K$ may be $\ell$-divisible (recall Definition \ref{def:divisible}) over one choice of base field but not another. For example, let $L/\bQ$ be a normal extension with Galois group equal to the symmetric group $S_5$. Let $H$ be a copy of the Klein 4-group in $S_5$, and let $K$ be the fixed field of $H$. Then $K/\bQ$ is not 2-divisible (as a quick check shows that $S_5/H$ is not 2-divisible). However, if $F$ is the fixed field of the copy of $S_4$ (also $D_4$ works) in $S_5$ that contains $H$, then $K/F$ is normal and thus 2-divisible. Note that $[L:K]$ and $[K:F]$ are even in this example, so there is potential for $t_\ell(K/F)$ to be positive, and Theorem \ref{thm:main1} would only apply to $K/F$, not $K/\bQ$.

\begin{customthm}{\ref{thm:intro1}}\label{thm:main1}For an $\ell$-divisible extension $K/F$, if $t_{\ell}(K/F)\geq 1$ then $$\rk_\ell\,\Cl(K)\geq \frac{t_\ell(K/F)}{\delta_\ell(K/F)}-\rk_{\ell}\,\cO_K^*+\rk_{\ell}\,\cO_F^*-e_\ell(K/F).$$\end{customthm}

\begin{proof}Let $L$ denote the normal closure of $K/F$ (in some fixed algebraic closure), let $\delta = \delta_\ell(K/F)$, let $H=\Gal(L/K)$ and $G=\Gal(L/F)$, let $H\leq G_1,...,G_\delta\leq G$ denote the subgroups defined in Definition \ref{def:divisible}, and let $F\subseteq F_1,...,F_\delta\subseteq K$ denote their fixed fields.

Consider a pair of prime ideals $\fp\subset\cO_K$ and $p=\fp\cap F$ for which $\ell$ divides the ramification index $e(\fp\di p)$. Fix any prime $\fP\subset\cO_L$ above $\fp$. Using $I$ to denote an inertia group, \begin{equation}\label{eq:inertia}\frac{|I(\fP\di p)|}{|H\cap I(\fP\di p)|}=e(\fp\di p)\end{equation} shows that a Sylow $\ell$-subgroup of $I(\fP\di p)$ properly contains its intersection with $H$. In particular, there exists $\sigma\in I(\fP\di p)-H$ such that $|\sigma|=\ell^{n+1}$ for some $n\geq 0$. The elements of $\cl_G(\sigma^{\ell^n})$ generate a normal subgroup of $G$. Since $L$ is the smallest normal extension of $F$ containing $K$, there are no nontrivial normal subgroups of $G$ contained in $H$. Thus $\cl_G(\sigma^{\ell^n})\not\subseteq H$. Applying the $\ell$-divisibility hypothesis, we conclude that for some index $i=i_p\leq \delta$, there exists $\tau\in \cl_G(\sigma^{\ell^n})\cap G_i$ satisfying \begin{equation}\label{eq:clempty}\cl_{G_i}(\tau)\cap H=\emptyset.\end{equation}
    
Let $\pi\in G$ satisfy $\pi\sigma^{\ell^n}\pi^{-1}=\tau$. (If there are multiple choices for $\sigma$, $i$, $\tau$, or $\pi$, pick any one.) Let $q=q_p=\pi(\fP)\cap F_i$, let $\fq$ be some prime above $q$ in $K$, and let $\fQ$ be a prime above $\fq$ in $L$. Assume by way of contradiction that $\ell\nmid e(\fq\di q)$. Then replacing $\fP\di \fp\di p$ in (\ref{eq:inertia}) with $\fQ\di\fq\di q$ shows that $\ell$ cannot divide $|I(\fQ\di q)|/|H\cap I(\fQ\di q)|$. In particular, a Sylow $\ell$-subgroup of $H\cap I(\fQ\di q)$, call one $S$, is also a Sylow $\ell$-subgroup of $I(\fQ\di q)$. Now, \begin{align*}\tau &\in G_i\cap \pi I(\fP\di p)\pi^{-1}\hspace{\parindent}\text{since }\tau = \pi\sigma^{\ell^n}\pi^{-1}\text{ and }\sigma\in I(\fP\di p)\\&= G_i\cap I(\pi(\fP)\di p)\\&=I(\pi(\fP)\di \pi(\fP)\cap F_i)\\& = I(\pi(\fP)\di q)\hspace{\parindent}\text{by definition of }q,\end{align*} and $I(\pi(\fP)\di q)$ is conjugate to $I(\fQ\di q)$ in $G_i$. Furthermore, whatever conjugate of $\tau$ lies in $I(\fQ\di q)$ has order $|\tau|=\ell$, so it belongs to some Sylow $\ell$-subgroup of $I(\fQ\di q)$. Since any Sylow $\ell$-subgroup of $I(\fQ\di q)$ is conjugate to $S$ (in $I(\fQ\di q)$ and therefore in $G_i$), we see that $\cl_{G_i}(\tau)\cap S$ is nonempty. But then $S\leq H$ contradicts (\ref{eq:clempty}). Thus $\ell\,|\,e(\fq\di q)$. Because $\fq$ was an arbitrary prime above $q$, this implies $q\cO_K$ is an $\ell^\text{th}$ power. 
    
Among the $t_\ell(K/F)$ primes $p$ in $F$ under consideration, the pigeon-hole principal says there must be at least $\lceil t_\ell(K/F)/\delta\rceil$ of them for which the indices $i_p$ produced above are all equal, say to $i_0\leq \delta$. We apply Lemma~\ref{lem:consus} to $K/F_{i_0}$ to obtain \begin{align*}\rk_\ell\,\Cl(K)&\geq s_\ell(K/F_{i_0})-\rk_\ell\,\cO_K^*+\rk_\ell\,\cO_{F_{i_0}}^*-\rk_\ell\!\left(\frac{F_{i_0}^*\cap K^{*\ell}}{F_{i_0}^{*\ell}}\right)\\ &\geq\frac{t_\ell(K/F)}{\delta_\ell(K/F)}-\rk_\ell\,\cO_K^*+\rk_\ell\,\cO_F^*-\rk_\ell\!\left(\frac{F_{i_0}^*\cap K^{*\ell}}{F_{i_0}^{*\ell}}\right).\end{align*} The final term above is the power of $\ell$ that divides the degree over $F_{i_0}$ of the smallest field containing all $\alpha\in K$ such that $\alpha^\ell\in F_{i_0}$. This exponent is bounded from above by $e_{\ell}(K/F_{i_0})\leq e_{\ell}(K/F)$, which completes the proof.\end{proof}

Regarding the end of the proof, note that it is possible for a tower of fields $K/F_{i_0}/F$ to satisfy $$\rk_\ell\!\left(\frac{F_{i_0}^*\cap K^{*\ell}}{F_{i_0}^{*\ell}}\right)> \rk_\ell\!\left(\frac{F^*\cap K^{*\ell}}{F^{*\ell}}\right).$$ As a simple example, take any odd prime $\ell$, let $F$ denote the cyclotomic field $\bQ(\zeta_\ell)$, and let $K'$ be any Galois extension of $\bQ$ with dihedral Galois group $D_\ell$ such that $K'\cap F=\bQ$. Then the inequality above holds with $K=K'F$ and $F_{i_0}$ as the quadratic extension of $F$ in $K$. Indeed, since $[K:F_{i_0}]=\ell$, Kummer theory tells us $\rk_\ell\,(F_{i_0}^*\cap K^{*\ell}/F_{i_0}^{*\ell}) = 1$. But $\Gal(K/F)\simeq\Gal(K'/K'\cap F)\simeq D_\ell$, so there is no normal extension of $F$ of degree $\ell$ in $K$. Thus $\rk_\ell\,(F^*\cap K^{*\ell}/F^{*\ell}) = 0$.

Also remark that Theorem \ref{thm:main1} \textit{appears} to provide nontrivial bounds on $\rk_\ell\,\Cl(K)$ even when $\ell\nmid[K:F]$, but this is not the case. To see why, we cite Lemma~\ref{lem:syls} from the appendix: if $K/F$ is $\ell$-divisible and $\ell\nmid[K:F]$, then $H$ contains all Sylow $\ell$-subgroups of $G$, where $G/H=\Gal(K/F)$. With this in mind, suppose $t_\ell(K/\bQ)\geq 1$ so that Theorem \ref{thm:main1} might apply. Then (\ref{eq:inertia}) shows that $H$ does not contain all Sylow $\ell$-subgroups of $G$. So by the contrapositive of Lemma~\ref{lem:syls}, $K/F$ cannot be $\ell$-divisible if $\ell\nmid[K:\bQ]$. Let us highlight this observation to be used Section \ref{sec:3}.

\begin{proposition}\label{prop:mustdivide}If $K/F$ is $\ell$-divisible and $t_\ell(K/F)\geq 1$, then $\ell\,|\,[K:F]$.\qed\end{proposition}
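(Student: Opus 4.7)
The plan is to prove the contrapositive: assuming $K/F$ is $\ell$-divisible with $\ell\nmid[K:F]$, I will show $t_\ell(K/F)=0$. Writing $G/H=\Gal(K/F)$ for the Galois group pair of the normal closure, the key input is Lemma~\ref{lem:syls} from the appendix, which tells us that under these two hypotheses every Sylow $\ell$-subgroup of $G$ is contained in $H$. Since Proposition~\ref{prop:mustdivide} is already flagged in the excerpt as a direct consequence of this lemma, the entire argument reduces to translating the $\ell$-divisibility of some ramification index into the existence of a Sylow $\ell$-subgroup of $G$ that escapes $H$.

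Concretely, suppose for contradiction that there is a prime $p$ of $F$ with a prime $\fp$ of $K$ above it satisfying $\ell\mid e(\fp\di p)$. Choose any prime $\fP$ of the normal closure $L$ lying above $\fp$. The inertia-index identity (\ref{eq:inertia}) — namely $|I(\fP\di p)|/|H\cap I(\fP\di p)| = e(\fp\di p)$ — is divisible by $\ell$, so a Sylow $\ell$-subgroup $P$ of $I(\fP\di p)$ strictly contains $P\cap H$, and in particular $P\not\subseteq H$. Any $\ell$-subgroup of $G$ is contained in some Sylow $\ell$-subgroup of $G$, so $P$ sits inside a Sylow $\ell$-subgroup $S$ of $G$. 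By Lemma~\ref{lem:syls}, $S\leq H$, which forces $P\leq H$ and contradicts the conclusion of the previous sentence. Hence no such prime $p$ exists, i.e., $t_\ell(K/F)=0$.

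I do not anticipate any real obstacle: the statement is essentially a bookkeeping corollary of Lemma~\ref{lem:syls}, already cited verbatim in the paragraph preceding the proposition, combined with the standard ramification formula (\ref{eq:inertia}). The only subtlety worth articulating is the passage from ``a Sylow $\ell$-subgroup of the inertia group escapes $H$'' to ``a Sylow $\ell$-subgroup of $G$ escapes $H$,'' and this is handled by the elementary fact that every $\ell$-subgroup of a finite group embeds into some Sylow $\ell$-subgroup of that group.
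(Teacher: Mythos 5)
Your proof is correct and follows essentially the same route as the paper: the paper also combines the inertia-index identity (\ref{eq:inertia}) with Lemma~\ref{lem:syls} to conclude that a positive $t_\ell(K/F)$ forces a Sylow $\ell$-subgroup of $G$ outside $H$, contradicting $\ell$-divisibility when $\ell\nmid[K:F]$. Your version merely spells out the passage from the inertia group to a Sylow $\ell$-subgroup of $G$, which the paper leaves implicit.
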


We return to the proof of Theorem~\ref{thm:main1} in order to observe a potential improvement. There is a lack of control over which $G$-conjugate of $\fp$, denoted $\fq$ in the proof, lies above a prime that becomes an $\ell^\text{th}$ power in $K$. This lack of control comes from Definition \ref{def:divisible}'s assertion that ``$\cl_{G_i}(\tau)\cap H=\emptyset$ for \textit{some} $\tau\in\cl_G(\sigma)\cap G_i$." Definition \ref{def:Divisible} strengthens this part of the hypothesis, leading to the potentially greater bound below.

\begin{customthm}{\ref{thm:intro2}}\label{thm:main2}For a strongly $\ell$-divisible extension $K/F$, if $T_{\ell}(K/F)\geq 1$ then $$\rk_\ell\,\Cl(K)\geq \frac{T_\ell(K/F)}{\Delta_\ell(K/F)}-\rk_{\ell}\,\cO_K^*+\rk_{\ell}\,\cO_F^*-e_\ell(K/F).$$\end{customthm}

\begin{proof}Let $L$, $G$, and $H$ be as in the proof of Theorem~\ref{thm:main1}, let $H\leq G_1,...,G_\delta\leq G$ denote the subgroups defined in Definition \ref{def:Divisible}, and let $F\subseteq F_1,...,F_\delta\subseteq K$ denote their fixed fields.
    
Consider a pair of prime ideals $\fp\subset\cO_K$ and $p=\fp\cap F$ for which $\ell$ divides the ramification index $e(\fp\di p)$. Let $\fP$ be a prime above $\fp$ in $L$. By (\ref{eq:inertia}), There must be some $\sigma\in I(\fP\di p)-H$ with order $\ell^n$ for some $n\geq 1$. By taking a power of $\sigma$ if necessary, we may assume that $\sigma^\ell\in H$. Strong $\ell$-divisibility now allows us to apply the remainder of the proof of Theorem~\ref{thm:main1} to $\sigma$ itself rather than some $\tau$ that is conjugate to a power of $\sigma$. More precisely, if $i=i_\fp\leq \delta$ is the index (assumed to exist) satisfying $\sigma\in G_i$ and $\cl_{G_i}(\sigma)\cap H=\emptyset$, then the prime ideal $q=q_\fp=\fp\cap F_i$ becomes an $\ell^{\text{th}}$ power in $K$ by the exact same argument as before. Observe that this bounds the number of distinct primes above $q$ in $K$ by $[K:F_i]/\ell$. So the function \begin{equation}\label{eq:nto1}\fp\mapsto q_\fp\text{ is at most }[K:F_{i_\fp}]/\ell\,\text{-}\,\text{to}\,\text{-}\,1.\end{equation}

For each $i=1,...,\delta$, let $s_i$ denote the number of distinct primes $q_{\fp}$ as $\fp$ ranges over all primes counted by $T_\ell(K/F)$ that satisfy $i_\fp = i$. We have \begin{align*}T_\ell(K/F)&=\sum_{i=1}^\delta\sum_{q\subset F_i}\big|\{\fp:q_{\fp}=q\}\big|\hspace{\parindent}\text{by grouping }\fp\text{ according to }i_\fp\text{ then }q_\fp\\ &\leq \sum_{i=1}^\delta\frac{s_i[K:F_i]}{\ell}\hspace{\parindent}\text{by (\ref{eq:nto1}) and the definition of }s_i\\&\leq \max_{1\leq i\leq \delta}\!(s_i)\sum_{i=1}^\delta\frac{[G_i:H]}{\ell}=\max_{1\leq i\leq \delta}\!(s_i)\Delta_\ell(K/F).\end{align*} By comparing either end of the chain of inequalities above, we see that $s_{i_0}\geq T_{\ell}(K/F)/\Delta_\ell(K/F)$ for at least one $i_0\leq \delta$. We apply Lemma~\ref{lem:consus} to the extension $K/F_{i_0}$ just as we did at the end of the proof of Theorem~\ref{thm:main1}.\end{proof}

Three corollaries to Theorems \ref{thm:main1} and \ref{thm:main2} can be found in the introduction, and others can be deduced from results in the appendix. However, the list of $\ell$-divisible families of groups and field extensions in the appendix is far from complete, and for those families that do appear, the accompanying upper bounds on $\delta_\ell$ and $\Delta_\ell$ are often only sharp in special cases. So for specific applications of Theorem~\ref{thm:main1} or \ref{thm:main2}, best is to simply prove $\ell$-divisibility or strong $\ell$-divisibility and compute exact values of $\delta_\ell$ or $\Delta_\ell$ by hand or with the aid of a computer algebra system like GAP \cite{GAP}.

\section{Class field towers}\label{sec:3}

For Galois extensions of $\bQ$ of fixed degree $d$, Brumer was the first to observe that a highly composite discriminant $D$ is sufficient to guarantee an infinite class field tower \cite{brumer}. Indeed, the pigeon-hole principle forces at least $\omega(D)/\omega(d)$ primes in $\bQ$ to have ramification indices that share a nontrivial common factor, some prime $\ell\,|\,d$. Theorem~\ref{thm:roqzass} then bounds $\rk_\ell\,\Cl(K)$ from below, and the Golod--Shafarevich theorem (Theorem~\ref{thm:golod}) completes the proof. 

A similar corollary falls out of Theorem~\ref{thm:main1}. For a number field $F$, let $\omega_F$ denote the number of distinct prime ideals of its argument in $\cO_F$.

\begin{corollary}\label{cor:inftower}Let $K/F$ be an extension of number fields of degree $d>1$ and relative discriminant $\fD$. If $K/F$ is $\ell$-divisible for every prime number $\ell$ and $$\left\lceil\frac{\omega_F(\fD)}{\omega_\bQ(d)}\right\rceil\geq \max_{\ell\,|\,d}\Big(\delta_\ell(K/F)\big(2+2\sqrt{\rk\,\cO_K^*+1}+\rk_\ell\,\cO_K^*-\rk_\ell\,\cO_F^*+e_\ell([K:F])\big)\Big),$$ then the $\ell$-class field tower of $K$ is infinite for some prime $\ell\,|\,d$.\end{corollary}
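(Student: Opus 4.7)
The plan is a three-stage bootstrap: a pigeon-hole argument on the ramified primes dividing $\fD$ produces a single prime $\ell \mid d$ with large $t_\ell(K/F)$; Theorem~\ref{thm:main1} converts this into a large $\ell$-rank of $\Cl(K)$; and Golod--Shafarevich (Theorem~\ref{thm:golod}) then forces the $\ell$-class field tower---and hence the full class field tower of $K$---to be infinite.

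To execute the first stage, observe that every prime $p \subset \cO_F$ dividing $\fD$ is ramified in $K/F$, so at least one of its ramification indices strictly exceeds $1$. Let $\ell_p$ be any prime divisor of such a ramification index; then $t_{\ell_p}(K/F)\geq 1$. Here is the essential use of the blanket $\ell$-divisibility hypothesis: by Proposition~\ref{prop:mustdivide}, $\ell_p \mid d$. Consequently every $\ell_p$ lives in the set of $\omega_\bQ(d)$ prime divisors of $d$, so the pigeon-hole principle yields some prime $\ell\mid d$ with
$$t_\ell(K/F) \;\geq\; \left\lceil \frac{\omega_F(\fD)}{\omega_\bQ(d)} \right\rceil.$$

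For the second stage, apply Theorem~\ref{thm:main1} with this $\ell$ (licensed by the $\ell$-divisibility hypothesis together with $t_\ell(K/F)\geq 1$). Combining its conclusion with the pigeon-hole bound above, and then with the corollary's hypothesis evaluated at this particular $\ell$, yields
$$\rk_\ell \Cl(K) \;\geq\; 2 + 2\sqrt{\rk\,\cO_K^* + 1}.$$
Theorem~\ref{thm:golod} applied to $K$ and $\ell$ then completes the argument, since an infinite $\ell$-class field tower is a subtower of the Hilbert class field tower.

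No step is genuinely hard; the content of the corollary is really the observation that the $\ell$-divisibility hypothesis, via Proposition~\ref{prop:mustdivide}, permits \emph{all} ramified primes of $F$ to be channeled into $t_\ell(K/F)$ for some single $\ell\mid d$, rather than leaving some of them uncounted. The only minor technical point is upgrading the ``$\geq$'' produced above to the strict inequality demanded by Theorem~\ref{thm:golod}, which is handled by the integrality of $t_\ell(K/F)$ against the real-valued right-hand side (or by absorbing a fractional amount of slack into the ceiling on the discrete side of the hypothesis).
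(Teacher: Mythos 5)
Your proposal is correct and follows essentially the same route as the paper: the same pigeon-hole argument over the primes dividing $\fD$ (using Proposition~\ref{prop:mustdivide} to confine the relevant primes $\ell$ to the divisors of $d$), followed by Theorem~\ref{thm:main1} and then Theorem~\ref{thm:golod}. Your closing remark about the strict inequality in Golod--Shafarevich versus the ``$\geq$'' in the hypothesis flags a point the paper itself passes over silently, so it is a welcome (if minor) addition rather than a divergence.
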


\begin{proof}The combination of Theorems~\ref{thm:main1} and \ref{thm:golod} tells us that $K$ has an infinite $\ell$-class field tower if \begin{equation}\label{eq:golod}\frac{t_\ell(K/F)}{\delta_\ell(K/F)}\geq 2+2\sqrt{\rk\,\cO_K^* + 1} + \rk_\ell\,\cO_K^* - \rk_\ell\,\cO_F^*+e_\ell(d).\end{equation} So to complete the proof, we need only show that $t_\ell(K/F)\geq \lceil\omega_F(\fD)/\omega_\bQ(d)\rceil$ for some prime $\ell$.

Each of the $\omega_F(\fD)$ prime ideals $p$ dividing $\fD$ lies below at least one $\fp\subset K$ for which $e(\fp\di p)>1$. By Lemma~\ref{prop:mustdivide}, any prime divisor of $e(\fp\di p)$ is one of the $\omega_\bQ(d)$ prime divisors of $d$. Thus by the pigeon-hole principal, there exists a prime $\ell$ such that at least $\lceil\omega_F(\fD)/\omega_\bQ(d)\rceil$ primes $p$ in $F$ satisfy $\ell\,|\,e(\fp\di p)$ for at least one $\fp$ above $p$ in $K$. In other words, $t_\ell(K/\bQ)\geq \lceil\omega_F(\fD)/\omega_\bQ(d)\rceil$ as desired.\end{proof}

Let us use this to verify (a relative version of) the simpler bound stated as Corollary~\ref{cor:brumer} in the introduction.

\begin{lemma}If $G/H$ is $\ell$-divisible, then $\delta_\ell(G/H)\leq 1+(|H|-1)/(\ell-1)$.\end{lemma}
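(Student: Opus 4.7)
The plan is to prove the stronger inequality $\delta_\ell(G/H) \leq 1 + m$, where $m$ denotes the number of subgroups of $H$ of order $\ell$. The lemma's bound then follows from the elementary count $m(\ell-1) \leq |H|-1$: distinct order-$\ell$ subgroups of $H$ contribute disjoint sets of $\ell-1$ generators, giving at most $|H|-1$ order-$\ell$ elements of $H$ in total.

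To exhibit a witnessing collection of size $1+m$, I would set $G_0 := G$ and, for each subgroup $K_1, \ldots, K_m$ of $H$ of order $\ell$, build a subgroup $H \leq G_j \leq G$. The group $G_0$ trivially handles every $\sigma \in G - H$ of order $\ell$ whose $G$-conjugacy class is disjoint from $H$: taking $\tau = \sigma$ gives $\cl_{G_0}(\tau) = \cl_G(\sigma)$, which misses $H$ by assumption. Every remaining $\sigma$ has $\cl_G(\sigma) \cap H \neq \emptyset$, and because $\sigma$ has order $\ell$ any element of this intersection also has order $\ell$ and so generates some $K_j$; I would assign $\sigma$ to that index and rely on $G_j$ to handle it.

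The construction of each $G_j$ would invoke the $\ell$-divisibility hypothesis: a witnessing collection $\{G_1', \ldots, G_{\delta'}'\}$ is guaranteed to exist, supplying for each relevant $\sigma$ a subgroup $G_i'$ and a conjugate $\tau_\sigma \in G_i'$ with $\cl_{G_i'}(\tau_\sigma) \cap H = \emptyset$. The plan is to uniformize these per-$\sigma$ data into a single $G_j$ per $K_j$, exploiting the fact that all $\sigma$'s assigned to $K_j$ are $G$-conjugate into the single cyclic subgroup $K_j$. A uniform construction built from a generator $k_j$ of $K_j$---morally a normalizer or centralizer of $k_j$, adjusted so that it contains $H$---should suffice to realize this.

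The main obstacle I anticipate is exactly this uniformization step. The $\ell$-divisibility hypothesis only supplies per-$\sigma$ witnesses, and collapsing them into a single $G_j$ per $K_j$ is delicate, particularly when $H$ is non-abelian and does not lie in the centralizer of $k_j$; in that regime the naive candidate subgroups fail to contain $H$ and a more elaborate construction is forced. I expect this to be the technical heart of the argument, requiring a careful synthesis of the per-$\sigma$ data into one subgroup that simultaneously handles every $\sigma$ associated with $K_j$.
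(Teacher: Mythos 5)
Your high-level plan matches the paper's exactly --- construct one subgroup per cyclic subgroup of $H$ of order $\ell$, plus $G$ itself for the conjugacy classes disjoint from $H$, and then use the elementary count $m(\ell-1)\leq|H|-1$. But you flag the uniformization step as the unresolved heart of the argument, and the remedy you sketch (a normalizer or centralizer of $k_j$, ``adjusted'' to contain $H$) heads in a direction that does not obviously produce a subgroup satisfying Definition~\ref{def:divisible}. That step is where the lemma actually lives, and it has a much cleaner resolution than you anticipate.

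The point you are missing is that no new construction is needed: a \emph{single application} of the $\ell$-divisibility hypothesis already supplies a subgroup that handles the entire cyclic subgroup $K=\langle\sigma\rangle$. Concretely, take any $\sigma\in H$ of order $\ell$ with $\cl_G(\sigma)\not\subset H$ (equivalently, some conjugate of $\sigma$ lies in $G-H$; every element of $G-H$ of order $\ell$ whose class meets $H$ arises this way). The hypothesis applied to a conjugate of $\sigma$ in $G-H$ produces $H\leq G_1\leq G$ and $\tau\in\cl_G(\sigma)\cap G_1$ with $\cl_{G_1}(\tau)\cap H=\emptyset$. Now observe that for $\ell\nmid n$ we have $\tau^n\in\cl_G(\sigma^n)\cap G_1$, and $\cl_{G_1}(\tau^n)\cap H=\emptyset$ as well: if $\pi\tau^n\pi^{-1}\in H$ for some $\pi\in G_1$, then since $\pi\tau\pi^{-1}$ has order $\ell$ and $\ell\nmid n$, the element $\pi\tau\pi^{-1}$ is a power of $\pi\tau^n\pi^{-1}$, hence lies in $H$, contradicting $\cl_{G_1}(\tau)\cap H=\emptyset$. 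So this one $G_1$ witnesses the definition for every generator $\sigma^n$ of $K$ simultaneously --- and therefore for every $\sigma_0\in G-H$ of order $\ell$ whose $G$-class meets $K$. One subgroup per $K\leq H$ of order $\ell$, plus $G$, gives the bound. The per-$\sigma$ data you were worried about collapsing already collapse on their own once you notice the ``power trick''; there is no need to reach for normalizers or centralizers.
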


\begin{proof}Let $\sigma\in H$ have order $\ell$, and suppose $\cl_G(\sigma)\not\subset H$. The $\ell$-divisibility hypothesis guarantees some $G_1\leq G$ containing $H$, such that $\cl_{G_1}(\tau)\cap H=\emptyset$ for some $\tau\in\cl_G(\sigma)\cap G_1$. Observe that for any $n$ we have $\tau^n\in\cl_G(\sigma^n)\cap G_1$. Furthermore, if $\ell\nmid n$ then $\pi\tau^n\pi^{-1}\in H$ for some $\pi\in G_1$ would imply $\pi\tau\pi^{-1}\in H$, thereby violating $\cl_{G_1}(\tau)\cap H=\emptyset$. Thus $\cl_{G_1}(\tau^n)\cap H=\emptyset$, meaning $G_1$ satisfies the condition of Definition \ref{def:divisible} for every nontrivial element in $\langle\sigma\rangle$. In particular, $\delta_\ell(G/H)$ is at most the number of distinct subgroups of $H$ of order $\ell$, plus one more (the whole group $G$) to account for any $\sigma\in G$ of order $\ell$ with a conjugacy class disjoint from $H$.\end{proof}

\begin{corollary}Along with notation from Corollary~\ref{cor:inftower}, let $d'$ be the degree of the Galois closure of $K/F$ over $\bQ$. If $K/F$ is $\ell$-divisible for every prime number $\ell$ and $\omega_F(\fD)\geq 4d'\omega_\bQ(d)$, the $\ell$-class field tower of $K$ is infinite for some prime $\ell\,|\,d$.\end{corollary}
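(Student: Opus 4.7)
The plan is to deduce this from Corollary~\ref{cor:inftower}. The hypothesis $\omega_F(\fD)\geq 4d'\omega_\bQ(d)$ forces $\lceil\omega_F(\fD)/\omega_\bQ(d)\rceil\geq 4d'$, so it suffices to show that for every prime $\ell\mid d$,
\[
\delta_\ell(K/F)\bigl(2+2\sqrt{\rk\,\cO_K^*+1}+\rk_\ell\,\cO_K^*-\rk_\ell\,\cO_F^*+e_\ell(d)\bigr)\;\leq\;4d'.
\]

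First, I would invoke the preceding lemma to bound $\delta_\ell(K/F)\leq 1+(|H|-1)/(\ell-1)\leq|H|$, where $H=\Gal(L/K)$ is the subgroup fixing $K$ inside the Galois closure $L$ of $K/F$. Since $d'=[L:\bQ]=|H|\cdot[K:\bQ]$, this reduces the target to showing the parenthesized factor is at most $4[K:\bQ]$.

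Second, I would bound that factor as a function of $n:=[K:\bQ]$ using only crude, general estimates: Dirichlet's unit theorem gives $\rk\,\cO_K^*\leq n-1$, hence $\sqrt{\rk\,\cO_K^*+1}\leq\sqrt n$; the remark following Theorem~\ref{thm:intro1} gives $\rk_\ell\,\cO_K^*\leq\rk\,\cO_K^*+1\leq n$; the $-\rk_\ell\,\cO_F^*$ term is nonpositive; and $e_\ell(d)\leq\log_2 d\leq\log_2 n$. So the parenthesized factor is at most $2+2\sqrt n+n+\log_2 n$.

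Finally, I would verify the elementary inequality $2+2\sqrt n+n+\log_2 n\leq 4n$ for every integer $n\geq 2$ (the assumption $d>1$ in Corollary~\ref{cor:inftower} forces $n\geq 2$) by a direct check at $n=2,3$ and by using $2\sqrt n\leq n$ and $\log_2 n\leq n$ when $n\geq 4$. The tightest case is $n=2$, where the slack is only $8-(5+2\sqrt 2)\approx 0.17$ but still positive; no part of this verification is really an obstacle, and the constant $4$ in ``$4d'$'' is clearly not optimized. Multiplying the resulting bound by $|H|$ gives $4|H|\cdot[K:\bQ]=4d'$, and Corollary~\ref{cor:inftower} concludes that the $\ell$-class field tower of $K$ is infinite for some prime $\ell\mid d$.
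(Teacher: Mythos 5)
Your proof is correct and takes essentially the same route as the paper. The paper also reduces to the two bounds $\delta_\ell(K/F)\leq [L:K]$ (via the preceding lemma) and the parenthesized Golod--Shafarevich factor $\leq 4[K:\bQ]$ (via $\rk_\ell\,\cO_K^*\leq[K:\bQ]$ and $e_\ell(d)\leq\log_2[K:\bQ]$), multiplying them to get $4d'$; the only difference is that you spell out the elementary verification of $2+2\sqrt n+n+\log_2 n\leq 4n$ for $n\geq 2$, which the paper leaves implicit.
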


\begin{proof}First observe that $\rk_\ell\,\cO_K^*$ is $\rk\,\cO_K^*$ or $1+\rk\,\cO_K^*$ depending on whether $K$ contains an $\ell^\text{th}$ root of unity. Either way, $\rk_\ell\,\cO_k^*\leq[K:\bQ]$. We also have $e_\ell(d)\leq\log_\ell(d)\leq\log_2([K:\bQ])$. Combining these upper bounds, we see that \begin{equation}\label{eq:4d}2+2\sqrt{\rk\,\cO_K^*+1}+\rk_\ell\,\cO_K^*-\rk_\ell\,\cO_F^*+e_\ell(d)\leq 4[K:\bQ]\end{equation} whenever $[K:\bQ]>1$. Thus the desired inequality follows from Corollary~\ref{cor:inftower} provided $\max_\ell\delta_\ell(K/F)\leq [L:K]$, where $L$ is the Galois closure of $K/F$. An even stronger bound on $\max_\ell\delta_\ell(K/F)$ is given in the preceding lemma.\end{proof}

Finally, we turn to our density result on nilpotent extensions with infinite class field towers (Theorem~\ref{thm:asymptotic} and Corollary~\ref{cor:density}). We require two lemmas, both of which are already ``known" in the sense that they are straightforward consequences of well-known results that are common to textbooks. (For Lemma \ref{lem:numfieldbound} see \cite{neukirch}, especially sections II.5 and VI.6, and \cite{milne}, especially section V.1. For Lemma~\ref{lem:asymptotic} see \cite{tenenbaum}, especially Section II.6.1 for an exposition of arguments due to Landau \cite{landau}.) Proofs are included, however, as the author has been unable to find either assertion in the literature.

\begin{lemma}\label{lem:numfieldbound}Fix a prime $\ell\in\bZ$. Let $S_0$ and $S_\infty$ be finite sets of finite and infinite primes, respectively, of a number field $F$, and let $T_0$ be any subset of the set of primes in $S_0$ that lie above $\ell$ (if there are any). For each $\fp\in S_0$, let $$\varepsilon_\fp =\begin{cases}1 & |\cO_F/\fp|\equiv 1\,\textup{mod}\,\ell\\ 1+[F_\fp:\bQ_\ell] & \fp\in T_0\\ 0 & \text{otherwise.}\end{cases}$$ Also let $\varepsilon_\infty$ equal $|S_\infty|$ if $\ell=2$ and $0$ if $\ell\neq 2$, and set \begin{equation}\label{eq:defexp}e=\rk_\ell\,\Cl_F + \varepsilon_\infty + \sum_{\fp\in S_0}\varepsilon_\fp.\end{equation} Then there are at most $(\ell^e-1)/(\ell-1)$ cyclic extensions of $F$ of degree $\ell$ that are unramified outside of $S_0\cup S_\infty$ and not wildly ramified outside of $T_0$.\end{lemma}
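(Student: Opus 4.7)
The plan is to use global class field theory to identify the cyclic degree-$\ell$ extensions in question with the index-$\ell$ subgroups of a suitable ray class group of $F$, then bound the $\ell$-rank of that ray class group via a standard exact sequence. Fix the modulus $\fm = \fm_{\text{fin}} \cdot \fm_\infty$ with $\fm_{\text{fin}} = \prod_{\fp \in S_0 \setminus T_0} \fp \cdot \prod_{\fp \in T_0} \fp^{c_\fp}$ and $\fm_\infty = S_\infty$, where $c_\fp$ is chosen large enough that every cyclic degree-$\ell$ extension of $F_\fp$ has Artin conductor dividing $\fp^{c_\fp}$. Such a $c_\fp$ exists because $F_\fp^*/F_\fp^{*\ell}$ is finite, and each of the finitely many homomorphisms $F_\fp^* \to \bF_\ell$ factors through $F_\fp^*/U_{F_\fp}^{(n)}$ for some $n$.

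By global class field theory and the definition of $\fm$, the cyclic degree-$\ell$ extensions of $F$ satisfying the lemma's hypotheses are precisely those whose conductor divides $\fm$, and these are in bijection with the index-$\ell$ subgroups of $\Cl_F^\fm$. The number of such subgroups equals the number of nonzero $\bF_\ell$-lines in $\mathrm{Hom}(\Cl_F^\fm, \bF_\ell)$, namely $(\ell^{\rk_\ell \Cl_F^\fm} - 1)/(\ell - 1)$. Hence it suffices to show $\rk_\ell \Cl_F^\fm \leq e$.

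For this bound I would apply the standard ray class group exact sequence
\[\cO_F^* \longrightarrow (\cO_F/\fm_\text{fin})^* \times \{\pm 1\}^{r} \longrightarrow \Cl_F^\fm \longrightarrow \Cl_F \longrightarrow 1,\]
where $r$ is the number of real places in $\fm_\infty$, which yields $\rk_\ell \Cl_F^\fm \leq \rk_\ell \Cl_F + \rk_\ell (\cO_F/\fm_\text{fin})^* + \rk_\ell \{\pm 1\}^{r}$. By the Chinese Remainder Theorem the middle term splits as $\sum_{\fp \in S_0}\rk_\ell (\cO_F/\fp^{c_\fp})^*$, and each summand is bounded by $\varepsilon_\fp$: for $\fp \in S_0 \setminus T_0$, $(\cO_F/\fp)^* \cong \bF_{q_\fp}^*$ has $\ell$-rank $1$ when $q_\fp \equiv 1 \pmod \ell$ and $0$ otherwise (the latter always holding when $\fp$ lies above $\ell$, since then $q_\fp = \ell^{f_\fp}$); and for $\fp \in T_0$, the structure theorem writes $\cO_{F_\fp}^* \cong \mu_{q_\fp - 1} \times U_{F_\fp}^{(1)}$, with $\mu_{q_\fp - 1}$ of order coprime to $\ell$ and $U_{F_\fp}^{(1)}$ a pro-$\ell$ group of $\bZ_\ell$-rank $[F_\fp : \bQ_\ell]$ and $\ell$-torsion $\mu_\ell(F_\fp)$ of $\bF_\ell$-dimension $\leq 1$, so that $\rk_\ell(\cO_F/\fp^{c_\fp})^* \leq 1 + [F_\fp : \bQ_\ell] = \varepsilon_\fp$ once $c_\fp$ is large enough that $U_{F_\fp}^{(c_\fp)} \subseteq (\cO_{F_\fp}^*)^\ell$. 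Finally, $\rk_\ell \{\pm 1\}^{r}$ equals $r \leq |S_\infty|$ when $\ell = 2$ and vanishes otherwise, in either case matching $\varepsilon_\infty$. Summing gives $\rk_\ell \Cl_F^\fm \leq e$.

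The main technical obstacle is handling the wild primes in $T_0$: both the existence of a finite exponent $c_\fp$ and the $\ell$-rank bound on the truncated local unit group $\cO_{F_\fp}^*/U_{F_\fp}^{(c_\fp)}$ rest on the structure of $F_\fp^*$ for a local field of residue characteristic $\ell$. I would cite Neukirch (especially sections II.5 and VI.6) and Milne (especially section V.1), as the paper suggests, to keep the local computation concise.
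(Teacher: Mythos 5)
Your proposal is correct and follows essentially the same route as the paper: identify the relevant extensions with index-$\ell$ subgroups of a ray class group $\Cl_\fm(F)$ for a modulus supported on $S_0\cup S_\infty$ (with exponent $1$ at tame primes), then bound $\rk_\ell\,\Cl_\fm(F)$ by $e$ via the exact sequence relating it to $\Cl(F)$ and $(\cO_F/\fm_0)^*\times\{\pm1\}^r$, with the same local unit computations at the primes of $T_0$. The only cosmetic difference is that you fix a sufficiently large modulus in advance, whereas the paper argues by contradiction using the least common multiple of the conductors of a putative excess of fields; both reduce to the identical rank bound.
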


\begin{proof}Assume for the sake of contradiction that the lemma fails. Let $n=1+(\ell^e-1)/(\ell-1)$, and fix any cyclic extensions $K_1,...,K_n$ of $F$ of degree $\ell$ that are unramified outside of $S_\infty\cup S_0$ and not wildly ramified outside of $T_0$. Let $\fm_i$ denote the conductor of $K_i/F$, which is supported on $S_0\cup S_\infty$ by assumption. Each finite prime $\fp\in S_0-T_0$ is either unramified or tamely ramified in $K_i$, so for such primes we have $\fp^2\nmid \fm_i$ for each $i$ \cite[page 23]{milne} \cite[Proposition VI.6.5]{neukirch}. Thus the same holds for the least common multiple of the $\fm_i$, call it $\fm=\fm_\infty\fm_0$, where $\fm_\infty$ is supported on $S_\infty$ and $\fm_0$ is supported on $S_0$. Letting $F(\fm_i)$ and $F(\fm)$ denote the corresponding ray class fields, we have $K_i\subseteq F(\fm_i)\subseteq F(\fm)$ \cite[page 158]{milne}. Let $\Cl_\fm(F)$ denote the ray class group for the modulus $\fm$, so that $\Gal(F(\fm)/F)\simeq \Cl_\fm(F)$. Since each $K_i$ corresponds to a unique quotient of $\Gal(F(\fm)/F)$ of index $\ell$, we may produce a contradiction by proving that $\Cl_\fm(F)$ has at most $(\ell^e-1)/(\ell-1)$ such quotients.

The number of quotients of $\Cl_\fm(F)$ of index $\ell$ equals the number of quotients of $\Cl_\fm(F)/(\Cl_\fm(F))^\ell$ of index $\ell$, and this is precisely $(\ell^r-1)/(\ell-1)$, where $r=\rk_\ell\,\Cl_\fm(F)$. We claim that $\rk_\ell\,\Cl_\fm(F) \leq e$, which will complete the proof.

Let $v_\fp$ denote the valuation associated to a finite prime $\fp$. Let $F_\fm$ be the group consisting of $\alpha\in F^*$ such that $v_\fp(\alpha)=0$ for all $\fp\,|\,\fm_0$, and let $F_{\fm,1}$ consist of $\alpha\in F_\fm$ such that $v_\fp(\alpha-1)\geq v_\fp(\fm_0)$ for all $\fp$, and $\sigma(\alpha) > 0$ for all real $\sigma\,|\,\fm_\infty$. Recall (the right half of) the canonical exact sequence $F_\fm/F_{\fm,1}\to \Cl_\fm(F)\to\Cl(F)\to 1$ \cite[Theorem V.1.7]{milne}. The $\ell$-rank of an abelian group is the dimension of its tensor product with $\bZ/\ell\bZ$, so right exactness of the tensor product applied to this sequence shows that $$\rk_\ell\,\Cl_\fm(F)\leq \rk_\ell\,\Cl(F) + \rk_\ell(F_\fm/F_{\fm,1}).$$ Comparing the inequality above to (\ref{eq:defexp}), we see that our claim has been reduced to proving that \begin{equation}\label{eq:Fm}\rk_\ell(F_\fm/F_{\fm,1})\leq\varepsilon_\infty +\sum_\fp\varepsilon_\fp.\end{equation}

From the isomorphism \cite[Theorem V.1.7]{milne} $$F_\fm/ F_{\fm,1}\simeq(\cO_F/\fm_0)^*\times\{\pm1\}^{\omega(\fm_\infty)}$$ and the fact that $\omega(\fm_\infty)\leq |S_\infty|$, we see that $\varepsilon_\infty$ bounds the $\ell$-rank of $F_\fm/ F_{\fm,1}$ contributed by the factor $\{\pm 1\}^{\omega(\fm_\infty)}$. Furthermore, by writing $(\cO_F/\fm_0)^*$ as a product of the unit groups $(\cO_F/\fp^{v_\fp(\fm)})^*$, we may conclude (\ref{eq:Fm}) provided  $\rk_\ell(\cO_F/\fp^{v_\fp(\fm)})^*\leq \varepsilon_\fp$ for each $\fp\in S_0$. This is immediate if $\fp\not\in T_0$, because in this case $v_\fp(\fm)=1$, so $(\cO_F/\fp^{v_\fp(\fm)})^*$ is a cyclic group that has at most one subgroup of index $\ell$ depending on whether $\ell$ divides $|(\cO_F/\fp^{v_\fp(\fm)})^*|=|\cO_F/\fp|-1$. For $\fp\in T_0$, let $F_\fp$ be the localization of $F$ at $\fp$, and observe that the inclusion $F\hookrightarrow F_\fp$ induces an isomorphism $(\cO_F/\fp^{v_\fp(\fm)})^*\simeq \cO_{F_\fp}^*/U^{(v_\fp(\fm))}_{F_\fp}$, where $U^{(i)}_{F_\fp}=\{\alpha\in F_\fp\mid v_\fp(\alpha-1)\geq i\}$ is the usual filtration of $\cO_{F_\fp}^*$. Recalling that $\cO_{F_\fp}^*$ is the product of a cyclic group (of roots of unity) and a free group of rank $[F_{\fp}:\bQ_\ell]$ \cite[Proposition II.5.3]{neukirch}, we see that $\rk_\ell(\cO_F/\fp^{v_\fp(\fm)})^*\leq \rk_\ell\,\cO_{F_\fp}^*\leq 1 + [F_\fp:\bQ_\ell]=\varepsilon_\fp$ as claimed.\end{proof}

\begin{lemma}\label{lem:asymptotic}Fix some positive integer $j$, and for $k\in\bZ$ and $x\in\bR$ define $$S_k(x)=\{s^jt\mid s,t\in \bZ,\;\omega(s)=\omega(st)\leq k,\;|s^jt|\leq x\}.$$ Then $$\lim_{x\to\infty}\frac{|S_k(x)|\log x}{x^{1/j}(\log\log x)^{k-1}}=\frac{2j}{(k-1)!}.$$\end{lemma}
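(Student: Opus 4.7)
The plan is to give a combinatorial description of $S_k(x)$ and then invoke Landau's classical asymptotic for integers with a fixed number of distinct prime divisors. Call a nonzero integer $n$ \emph{$j$-full} if $v_p(n) \geq j$ for every prime $p \mid n$.

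First I would show that $S_k(x)$ is exactly the set of nonzero integers $n$ with $|n| \leq x$ that are $j$-full and satisfy $\omega(n) \leq k$. The condition $\omega(s) = \omega(st)$ amounts to saying that every prime divisor of $t$ divides $s$; hence every prime divisor of $n = s^j t$ divides $s$, so $v_p(n) \geq j\, v_p(s) \geq j$ and $\omega(n) = \omega(s) \leq k$. Conversely, any such $n$ admits the representation $n = s^j t$ with $s = \mathrm{sgn}(n)\,\mathrm{rad}(|n|)$ and $t = n/s^j \in \mathbb{Z}$, whose prime divisors lie among those of $s$.

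Next I would count the positive $j$-full integers $n \leq x$ with $\omega(n) \leq k$; call this $T_k(x)$, so $|S_k(x)| = 2 T_k(x)$ by the $n \mapsto -n$ symmetry. Every $j$-full $n$ factors uniquely as $n = m^j \ell$ with $m = \mathrm{rad}(n)$ squarefree and $\ell \geq 1$ supported on the primes of $m$ (and $\omega(m) = \omega(n)$). The main contribution to $T_k(x)$ comes from $\omega(n) = k$ and $\ell = 1$, which counts squarefree $m \leq x^{1/j}$ with $\omega(m) = k$. Landau's estimate
$$\#\{m \leq y : \omega(m) = k\} \sim \frac{y(\log\log y)^{k-1}}{(k-1)!\log y}$$
(whose squarefree analogue has the same main term, since non-squarefree integers with exactly $k$ prime divisors are of lower order) applied at $y = x^{1/j}$, together with $\log\log x^{1/j} = \log\log x + O(1)$, yields the main term $\tfrac{j\, x^{1/j}(\log\log x)^{k-1}}{(k-1)!\log x}(1+o(1))$.

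Finally, I would show the remaining contributions are $O\!\bigl(x^{1/j}(\log\log x)^{k-2}/\log x\bigr)$. The part with $\omega(n) < k$ follows inductively from the same estimate with smaller $k$. For the $\omega(n) = k$, $\ell \geq 2$ part, some prime $p \mid n$ has $v_p(n) \geq j+1$; writing $n = p^a n'$ with $a \geq j+1$ and $\gcd(p,n')=1$, the cofactor $n'$ is $j$-full with $\omega(n') = k-1$, and summing inductively against $\sum_p \sum_{a\geq j+1} p^{-a/j} \ll \sum_p p^{-1-1/j} < \infty$ gives the needed bound. The main obstacle is precisely this bookkeeping for the $\ell \geq 2$ contribution, since it requires combining an induction on $k$ with convergence of a Dirichlet-type sum; once that is settled, doubling for signs and dividing by $x^{1/j}(\log\log x)^{k-1}/\log x$ yields the limit $2j/(k-1)!$.
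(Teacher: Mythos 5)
Your proposal is correct and follows essentially the same route as the paper: the main term comes from $j$-th powers of squarefree integers with exactly $k$ prime factors via Landau's theorem (picking up the factors $2$ from signs and $j$ from $\log x^{1/j}=\tfrac{1}{j}\log x$), and the remaining $j$-full integers are controlled by induction on $k$ against the convergent sum $\sum_p\sum_{a>j}p^{-a/j}$. The only cosmetic difference is that you carve out the main set through the canonical factorization $n=m^j\ell$ with $m=\mathrm{rad}(n)$, whereas the paper compares $S_k(x)$ directly with $T_k(x)=\{\pm s^j:\omega(s)\le k\}$; the error estimates are the same.
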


\begin{proof} Let $\pi_k(x)$ count the number of square-free positive integers with exactly $k$ distinct prime divisors. In \cite{landau}, Landau proves that $$\lim_{x\to\infty}\frac{\pi_k(x)\log x}{x(\log\log x)^{k-1}}=\frac{1}{(k-1)!}.$$

Now let $$T_k(x)=\big\{\pm s^j\mid s\in\bZ,\;\omega(s)\leq k,\;|s^j|\leq x\big\}.$$ Observe that $T_k(x)$ counts the same integers as $\pi_1(x)+\cdots +\pi_k(x)$ (which is dominated by $\pi_k(x)$ for large $x$), but raised to the power $j$ and scaled by $\pm 1$ (in case $j$ is even). It follows that $$\lim_{x\to\infty}\frac{|T_k(x)|\log x}{x^{1/j}(\log\log x)^{k-1}}=\frac{2j}{(k-1)!}.$$ So our proof will be complete if we can show that $\lim_x|S_k(x)|/|T_k(x)|=1$. This holds trivially for $k=0$. For $k>0$ we induct on $k$ to obtain \begin{align*}|S_k(x)-T_k(x)|&\leq\sum_{\substack{i > j\\p<x^{1/i}}}\!\bigg|S_{k-1}\!\bigg(\frac{x}{p^i}\bigg)\bigg|\\&\leq c_0(j,k)\!\!\sum_{\substack{i > j\\p<x^{1/i}}}\!\frac{x^{1/j}(\log\log x)^{k-2}}{p^{i/j}\log x}\hspace{\parindent}\text{for some }c_0(j,k)\text{ by induction}\\ &\leq\frac{c_0(j,k)x^{1/j}(\log\log x)^{k-2}}{\log x}\sum_{\substack{i > j\\p\geq 2}}\frac{1}{p^{i/j}}\\&\leq\frac{c_1(j,k)x^{1/j}(\log\log x)^{k-2}}{\log x}\hspace{\parindent}\text{for some }c_1(j,k).\end{align*} The last expression is negligible compared to $|T_k(x)|$ as $x$ grows.\end{proof}

For our final theorem, recall Notation~\ref{not:malle}.

\begin{theorem}\label{thm:malle}Let $F$ be a number field, and let $G/H$ be groups with $G$ nilpotent. Set $d=[F:\bQ][G:H]$ and \begin{equation}\label{eq:defe}e=\omega_\bQ([G:H])\!\max_{\ell\,|\,[G:H]}\!\Big\lfloor\delta_\ell(G/H)\big(d+2\sqrt{d}+e_\ell([G:H])+2-\rk_\ell\,\cO_F^*\big)\Big\rfloor.\end{equation} There is an constant $c(d)$ (depending only on $d$) such that for large $x$, $$N_0(F,G/H,x)< \frac{c(d)x^{a(G/H)}(\log\log x)^{e-1}}{\log x}.$$ In particular, if $H=1$ is the trivial subgroup, then there is a constant $c'(F,G)$ (depending only on $F$ and $G$) such that for large $x$, $$\frac{N_0(F,G/1,x)}{N(F,G/1,x)}<\frac{c'(F,G)(\log \log x)^{e-1}}{\log x}.$$\end{theorem}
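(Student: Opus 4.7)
The plan is to combine a ramification restriction (obtained from the Golod--Shafarevich theorem via Theorem~\ref{thm:main1}) with a Landau-type count of integers having few prime divisors (Lemma~\ref{lem:asymptotic}). First, I would show that every $G/H$-extension $K/F$ with finite class field tower satisfies $\omega_F(\fD) \leq e$. Since $G$ is nilpotent, $K/F$ is $\ell$-divisible for every prime $\ell$ (Corollary~\ref{cor:nilpotent} in the appendix). If the class field tower of $K$ is finite then so is the $\ell$-class field tower for each $\ell$, so Theorem~\ref{thm:golod} gives $\rk_\ell\,\Cl(K) \leq 2 + 2\sqrt{\rk\,\cO_K^* + 1}$. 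Rearranging Theorem~\ref{thm:main1} and using the crude bounds $\rk\,\cO_K^* + 1 \leq d$ and $\rk_\ell\,\cO_K^* \leq d$ yields $t_\ell(K/F) \leq \delta_\ell(G/H)(d + 2\sqrt{d} + e_\ell([G:H]) + 2 - \rk_\ell\,\cO_F^*)$ for every $\ell\mid[G:H]$. By Proposition~\ref{prop:mustdivide} each ramified prime of $F$ contributes to $t_\ell$ for some such $\ell$, and a pigeonhole argument exactly as in Corollary~\ref{cor:inftower} gives $\omega_F(\fD) \leq e$.

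Next, set $j = 1/a(G/H)$. Because $a(G/H) = \max_{\sigma\neq 1}1/([G:H] - n_\sigma)$ and each $[G:H] - n_\sigma$ is a positive integer for $\sigma\neq 1$, $j$ is itself a positive integer. For any ramified prime $\fp$ of $F$ in a $G/H$-extension, $v_\fp(\fD) \geq j$: for tame ramification this exponent equals $[G:H] - n_\sigma$ where $\sigma$ generates inertia, and wild ramification can only make it larger. Taking absolute norms, $n := |N_{F/\bQ}(\fD)| \leq x$ is a positive integer with at most $e$ distinct prime divisors in $\bZ$, each of multiplicity at least $j$. So $n = s^j t$ with $\omega(s) = \omega(st) \leq e$, and Lemma~\ref{lem:asymptotic} bounds the number of such $n \leq x$ by $O_{j,e}(x^{1/j}(\log\log x)^{e-1}/\log x) = O(x^{a(G/H)}(\log\log x)^{e-1}/\log x)$, which is exactly the shape of the bound sought.

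Finally, I would bound the number of $G/H$-extensions of $F$ realizing a given absolute discriminant norm by a constant depending only on $d$. Exploiting the nilpotent structure of $G$, any $G/H$-extension sits in a tower of cyclic prime-degree extensions of intermediate fields, of length bounded by $\log_2 d$. At each step Lemma~\ref{lem:numfieldbound} bounds the number of cyclic prime-degree extensions of the intermediate field with ramification confined to at most $e$ primes (and wild part above the residue characteristic). Multiplying across the tower yields a constant bound, which combined with the previous paragraph completes the bound on $N_0(F,G/H,x)$. For the density statement in the case $H = 1$, Kl\"uners--Malle's theorem gives $N(F,G/1,x) \gg_{F,G} x^{a(G/1)}$ for nilpotent $G$ \cite{kluners}, so dividing produces the claimed bound on the quotient.

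The hardest step is the last one: keeping the per-discriminant count uniformly controlled in $d$ despite Lemma~\ref{lem:numfieldbound}'s explicit dependence on $\rk_\ell\,\Cl_F$ and on the local structure of $F$ above $\ell$. The definition of $e$ in Theorem~\ref{thm:malle} is calibrated precisely so that such $F$-dependent contributions get absorbed into the $(\log\log x)^{e-1}$ factor, leaving the residual multiplicative constants controllable in terms of $d$ alone. A secondary technical point is ensuring that $j = 1/a(G/H)$ really is an integer; this follows from the observation that $[G:H] - n_\sigma \in \bZ_{>0}$ for every $\sigma \neq 1$ and the maximum is attained, but worth double-checking against degenerate cases where $a(G/H)$ could be badly behaved.
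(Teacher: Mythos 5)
Your overall strategy is the same as the paper's: bound $\omega_F(\fD)\leq e$ via Golod--Shafarevich plus Theorem~\ref{thm:main1} (the paper routes this through Corollary~\ref{cor:inftower}), observe that admissible discriminant norms lie in the set $S_e(x)$ of Lemma~\ref{lem:asymptotic} with $j=1/a(G/H)$, bound the number of $G/H$-extensions per discriminant norm by a constant via a tower of cyclic prime-degree subextensions and Lemma~\ref{lem:numfieldbound}, and finish with Kl\"uners--Malle. The first two steps are carried out correctly and match the paper.

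The gap is in the step you yourself flag as hardest: controlling the per-discriminant count. When you apply Lemma~\ref{lem:numfieldbound} at stage $i$ of the tower, its exponent involves $\rk_\ell\,\Cl(K_i)$ for the intermediate field $K_i$, and as $K$ ranges over all $G/H$-extensions of $F$ this quantity is a priori unbounded; it cannot be ``absorbed into the $(\log\log x)^{e-1}$ factor,'' since that factor comes entirely from Lemma~\ref{lem:asymptotic} (i.e., from the bound $\omega_F(\fD)\leq e$) and has nothing to do with how many fields share a given discriminant norm. The missing idea is that each intermediate field $K_i$ is a subfield of $K$, hence has a finite class field tower whenever $K$ does (a number field with a finite extension of class number one is such itself), so the contrapositive of Theorem~\ref{thm:golod} applies to $K_i$ and gives $\rk_\ell\,\Cl(K_i)\leq 2+2\sqrt{\rk\,\cO_{K_i}^*+1}\leq 2+2\sqrt{d}$. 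Combined with $[(K_i)_\fp:\bQ_\ell]\leq d$ and $|S_0|$ bounded in terms of $e$ and $d$ (using $\omega_F(\fD)\leq e$ again), the exponent in Lemma~\ref{lem:numfieldbound} is bounded by a function of $d$, and only then does the product over the (at most $\log_2 d$) stages of the tower yield a per-discriminant count $c_0(d)$. Without this second application of Golod--Shafarevich to the intermediate fields, the argument does not close.
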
 

\begin{proof}Recall that $G$ is a direct product of its Sylow subgroups, and $H$ is a direct product of subgroups of the Sylow subgroups of $G$. In particular, we can fix a normal series $H=H_0\trianglelefteq H_1\trianglelefteq\cdots\trianglelefteq H_n=G$ such that $[H_i:H_{i-1}]$ is prime for each $i$ (as this can evidently be done in each individual Sylow subgroup factor of $G$). Also fix some $D\in \bZ$, and let $\Sigma=\Sigma_D$ be the set of extensions of $F$ (in some fixed algebraic closure) with Galois group $G/H$, relative discriminant of absolute norm $D$, and finite class field tower. Note that if such a field exists, call one $K/F$ with relative discriminant $\fD$, then Corollary~\ref{cor:inftower} bounds $\omega_F(\fD)$ by $e$ from (\ref{eq:defe}) (using $\rk_\ell\,\cO_K^*\leq\rk_\ell\,\cO_K^*+1\leq n$). Since every prime dividing $D$ lies below at least one prime ideal dividing $\fD$, we conclude that \begin{equation}\label{eq:omegabound}\Sigma_D\neq\emptyset\,\text{ implies }\,\omega_\bQ(D)\leq\omega_F(\fD)\leq e.\end{equation}

We define subsets of $\Sigma$ as follows: for each $i\leq n$, let $\Sigma_i$ be the set of fixed fields of $H_i$ in some $K\in\Sigma$. So an elements of $\Sigma_{i-1}$ are extensions of elements of $\Sigma_i$ of prime degree $[H_i:H_{i-1}]$. As a subfield of a field with finite class field tower, each $K_i\in\Sigma_i$ must also have a finite class field tower. So the Golod--Shafarevich theorem bounds $\rk_\ell\,\Cl(K_i)$ by a function of $d$. Thus by Lemma~\ref{lem:numfieldbound}, the number of fields in $\Sigma_{i-1}$ that contain a given $K_i$ are bounded by a function of $d$ (and $\omega_F(\fD)$, but as we see from (\ref{eq:omegabound}), this can be absorbed by the function of $d$). In other words, there is a function of $d$ that bounds $|\Sigma_{i-1}|/|\Sigma_i|$. By induction, the same is true of $|\Sigma_0|/|\Sigma_n|=|\Sigma|/1$, meaning there is some $c_0(d)$ such that \begin{equation}\label{eq:sigmabound}|\Sigma_D|<c_0(d)\end{equation} for any choice of $D$.

Now let $a=a(G/H)$, and recall from Notation~\ref{not:malle} that $1/a$ is a positive integer. If $D$ is a discriminant norm for which $\Sigma_D\neq\emptyset$, the restriction that $K/F$ have Galois group $G/H$ forces $D$ to be of the form $s^{1/a}t$, where $s,t\in \bZ$ and $\omega_\bQ(s)=\omega_\bQ(D)$. (This observation is the impetus for the weak form of Malle's conjecture \cite{malle}. See Section 4 in \cite{malle2}.) In other words, if in the statement of Lemma~\ref{lem:asymptotic} we set $j=1/a$, then $\Sigma_D\neq\emptyset$ implies $D$ belongs to the set $S_e(x)$ for any $x\geq |D|$.

The combination of Lemma~\ref{lem:asymptotic} and (\ref{eq:sigmabound}) gives  $$N_0(F,G/H,x)=\sum_{|D|\leq x}\!|\Sigma_D|\leq \!\!\sum_{D\in S_e(x)}\!\!\!\!c_0(d)=c_0(d)|S_e(x)|\leq\frac{c(d)x^a(\log\log x)^{e-1}}{\log x}$$ for some choice of $c(d)$ (provided $\log\log x > 0$). This proves the first inequality of our theorem.

The second inequality is an immediate consequence of the first along with the theorem of Kl{\"u}ners and Malle that states $N(F,G/1,x)> c_1(F,G)x^a$ when $G$ is nilpotent \cite{kluners}.\end{proof}

Note that the value of $e$ in (\ref{eq:defe}) differs from that in Theorem~\ref{thm:asymptotic} and Corollary~\ref{cor:density}. In the latter two results, the bound in Corollary~\ref{cor:inftower} is no longer directly used to define $e$. To the expression on the right-hand side of Corollary~\ref{cor:inftower}'s inequality, we apply (\ref{eq:4d}) and the bound on $\max_\ell(\delta_\ell(K/F))$ from Corollary~\ref{cor:nilpotent}. This provides the exponent used in the introduction.

\section{Appendix: Divisible groups and field extensions}\label{sec:4}

This section identifies a few types of fields extensions that are $\ell$-divisible and strongly $\ell$-divisible. Our list is not comprehensive. For example, we do not prove that any extensions $K/F$ with simple Galois groups are $\ell$-divisible (other than the trivial case when $\ell$ does not divide the degree of the Galois closure of $K/F$ over $K$). Such extensions do exist, however. The alternating group $A_5$ has one nontrivial subgroup $H$ up to conjugation such that $A_5/H$ is $\ell$-divisible for all primes $\ell$, namely the subgroup of order $2$, while $A_6$ has $8$ such subgroups up to conjugation and $A_7$ has $15$.

\subsection{Towers of fields} This section, specifically Corollary~\ref{cor:galoistower}, justifies Corollary~\ref{cor:intro1} from the introduction.

\begin{lemma}\label{lem:grouptower}Let $H=H_0\leq H_1\leq\cdots\leq H_n=G$ be groups. If $H_i/H_{i-1}$ is $\ell$-divisible for all $i$, then $G/H$ is $\ell$-divisible with $$\delta_\ell(G/H)\leq\sum_{i=1}^n\delta_\ell(H_i/H_{i-1}).$$ The analogous claim holds for strong $\ell$-divisibility with $$\Delta_\ell(G/H)\leq\sum_{i=1}^n[H_{i-1}:H]\Delta_\ell(H_i/H_{i-1}).$$\end{lemma}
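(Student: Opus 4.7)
The plan is to prove both claims simultaneously without induction on $n$: I fix optimal witnesses $H_{i-1}\leq G_{i,1},\ldots,G_{i,\delta_i}\leq H_i$ (with $\delta_i=\delta_\ell(H_i/H_{i-1})$) for each step $H_i/H_{i-1}$, and argue that the concatenated family $\{G_{i,j}\}_{i,j}$ witnesses $\ell$-divisibility of $G/H$ with the stated bound. Each such $G_{i,j}$ automatically contains $H$, since $H\leq H_{i-1}\leq G_{i,j}$, so the list is admissible in Definitions~\ref{def:divisible} and \ref{def:Divisible}.

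For the weak case, given $\sigma\in G-H$ of order $\ell$, I select the smallest index $i$ with $\sigma\in H_i$; this $i$ satisfies $i\geq 1$ because $\sigma\notin H_0=H$, so $\sigma\in H_i-H_{i-1}$. Applying $\ell$-divisibility of $H_i/H_{i-1}$ to $\sigma$ produces some $G_{i,j}$ and some $\tau\in\cl_{H_i}(\sigma)\cap G_{i,j}$ with $\cl_{G_{i,j}}(\tau)\cap H_{i-1}=\emptyset$. Since $\cl_{H_i}(\sigma)\subseteq\cl_G(\sigma)$ and $H\leq H_{i-1}$, this upgrades directly to $\tau\in\cl_G(\sigma)\cap G_{i,j}$ with $\cl_{G_{i,j}}(\tau)\cap H=\emptyset$, the conclusion required by Definition~\ref{def:divisible}. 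Summing $\delta_i$ over $i$ gives the bound on $\delta_\ell(G/H)$.

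For the strong version, the same stratification works: given $\sigma\in G-H$ with $\sigma^\ell\in H$ and $|\sigma|=\ell^n$, the inclusion $\sigma^\ell\in H\leq H_{i-1}$ makes $\sigma$ eligible for strong $\ell$-divisibility of $H_i/H_{i-1}$, which produces $G_{i,j}$ with $\sigma\in G_{i,j}$ and $\cl_{G_{i,j}}(\sigma)\cap H_{i-1}=\emptyset$, hence $\cap H=\emptyset$. For the $\Delta_\ell$ bound, multiplicativity in the tower $H\leq H_{i-1}\leq G_{i,j}$ gives
$$\frac{1}{\ell}\sum_{i,j}[G_{i,j}:H]=\sum_i[H_{i-1}:H]\cdot\frac{1}{\ell}\sum_j[G_{i,j}:H_{i-1}]=\sum_i[H_{i-1}:H]\,\Delta_\ell(H_i/H_{i-1}),$$
which is the claimed inequality. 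I do not anticipate a real obstacle here; the only point that even warrants mention is that $\sigma\notin H$ forces the chosen $i$ to be at least $1$, so the stratum $H_i/H_{i-1}$ is always available to apply the hypothesis to.
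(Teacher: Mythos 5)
Your proof is correct and takes essentially the same route as the paper's: the paper concatenates the two witness families in the base case $n=2$ and then inducts, whereas you concatenate all $n$ families at once and route each $\sigma$ to the first stratum $H_i$ containing it. The only (immaterial) difference is that in the weak case the paper splits on whether $\cl_G(\sigma)\subseteq H_1$ rather than on whether $\sigma\in H_1$; both selections work, and your index computation for the $\Delta_\ell$ bound matches the paper's.
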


\begin{proof}Let us prove the case $n=2$, then the full claim follows by induction. By definition of $\delta_\ell(G/H)$ (or $\Delta_\ell(G/H)$), it suffices to show that the union of two sets of intermediate groups satisfying Definition~\ref{def:divisible} (or Definition~\ref{def:Divisible}), one set for $G/H_1$ and the other for $H_1/H$, will satisfy Definition~\ref{def:divisible} (or Definition~\ref{def:Divisible}) for $G/H$. 
    
Suppose $\sigma\in G-H$ has order $\ell$. On the one hand, if $\cl_G(\sigma)\not\subset H_1$, then $\ell$-divisibility of $G/H_1$ provides some $G'\leq G$ containing $H_1$ such that $\cl_{G'}(\tau)\cap H_1=\emptyset$ for some $\tau\in G'\cap \cl_G(\sigma)$. Then $H\leq H_1\leq G'$ and $\cl_{G'}(\tau)\cap H\subseteq\cl_{G'}(\tau)\cap H_1=\emptyset$ as desired. On the other hand, if $\cl_G(\sigma)\subset H_1$, then $\ell$-divisibility of $H_1/H$ provides some $H'\leq H_1$ containing $H$ such that $\cl_{H'}(\tau)\cap H=\emptyset$ for some $\tau\in H'\cap \cl_{H_1}(\sigma)\subset H'\cap \cl_G(\sigma)$.

 The same argument works for strong $\ell$-divisibility.\end{proof}

\begin{lemma}\label{lem:quo}Let $N\leq H\leq G$ with $N\trianglelefteq G$, and let $\cG=G/N$ and $\cH=H/N$. Then $G/H$ is $\ell$-divisible if and only if $\cG/\cH$ is $\ell$-divisible, in which case $\delta_\ell(G/H)=\delta_\ell(\cG/\cH)$.  The analogous claim holds for strong $\ell$-divisibility with $\Delta_\ell(G/H)=\Delta_\ell(\cG/\cH)$.\end{lemma}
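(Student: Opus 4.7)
The plan is to leverage the subgroup correspondence for the quotient $G \to \cG := G/N$: subgroups of $G$ containing $N$ biject with subgroups of $\cG$ via $G_i \mapsto \cG_i := G_i/N$, and this bijection preserves both the relation ``contains $H$" (which corresponds to ``contains $\cH$") and the index, since $[\cG_i:\cH] = [G_i:H]$. The latter identity will deliver $\Delta_\ell(G/H) = \Delta_\ell(\cG/\cH)$ in the strong case for free, while the bijection itself (being size-preserving on families) is what upgrades the inequalities for $\delta_\ell$ to an equality. First I would prove an elementary translation lemma: for any $G_i \supseteq N$ and $\tau \in G_i$, $\cl_{G_i}(\tau) \cap H = \emptyset$ if and only if $\cl_{\cG_i}(\bar\tau) \cap \cH = \emptyset$ (where $\bar\tau$ denotes the image in $\cG$). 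The forward implication is immediate by projection; the reverse uses $N \leq H$, so a conjugate $\bar g \bar\tau \bar g^{-1} \in \cH$ lifts to $g\tau g^{-1} n \in H$ for some $g \in G_i$ and $n \in N \subseteq H$, whence $g\tau g^{-1} \in H$. The same principle yields $\sigma \in G - H \iff \bar\sigma \in \cG - \cH$.

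To prove $\delta_\ell(G/H) \leq \delta_\ell(\cG/\cH)$, I would pull back a minimal witness family $\{\cG_i\}$ for $\cG/\cH$ to preimages $\{G_i\}$ in $G$, all of which contain $H$. Given $\sigma \in G - H$ of order $\ell$, the image $\bar\sigma \in \cG - \cH$ has order dividing $\ell$, and is nontrivial because $\sigma \notin N$ (as $N \leq H$); hence $|\bar\sigma| = \ell$. The $\cG$-side witness supplies some $i$ and $\bar\tau = \bar\pi\bar\sigma\bar\pi^{-1} \in \cG_i$ with $\cl_{\cG_i}(\bar\tau) \cap \cH = \emptyset$. Lifting $\bar\pi$ to any $\pi \in G$ and setting $\tau = \pi\sigma\pi^{-1}$ yields $\tau \in \cl_G(\sigma) \cap G_i$ (the latter by the correspondence), and the translation lemma transfers the disjointness condition to $G_i$. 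This gives a witness family for $G/H$ of the same size.

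The reverse inequality $\delta_\ell(\cG/\cH) \leq \delta_\ell(G/H)$ is the mirror image: push a witness family $\{G_i\}$ for $G/H$ forward to $\cG_i = G_i/N$, and for each $\bar\sigma \in \cG - \cH$ of order $\ell$ find a compatible $\sigma \in G - H$ to feed the $G$-side witness. The main obstacle is this last lift, since an arbitrary lift of $\bar\sigma$ need not have order $\ell$ in $G$. The plan is to lift $\bar\sigma$ to any $\sigma' \in G$ (noting $\sigma' \notin H$) and replace it by its $\ell$-primary part, which commutes with the $\ell'$-part and still projects to $\bar\sigma$ by the coprime-order decomposition in $\langle \bar\sigma \rangle$; this yields an $\ell$-element of $G - H$ projecting to $\bar\sigma$, on which the witness can be applied (after a minor reduction using the same ``take powers and transport by conjugation" manoeuvre from the proof of Theorem~\ref{thm:main1} to descend to order exactly $\ell$ while staying outside $H$). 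The translation lemma then hands back a valid witness $\bar\tau \in \cG_i$.

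The strong $\ell$-divisibility case runs entirely parallel and is cleaner: the hypothesis allows any $\sigma$ of $\ell$-power order with $\sigma^\ell \in H$, so the lifting obstacle disappears---the $\ell$-primary part of any lift of $\bar\sigma$ automatically satisfies both conditions (using $N \leq H$ to see $\sigma^\ell \in H$). Combined with the index identity $[\cG_i:\cH] = [G_i:H]$, the bijection of witness families yields $\sum_i [\cG_i:\cH]/\ell = \sum_i [G_i:H]/\ell$, and hence $\Delta_\ell(G/H) = \Delta_\ell(\cG/\cH)$.
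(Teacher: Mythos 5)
Your strategy is the paper's own: the lattice correspondence $G_i\leftrightarrow\cG_i=G_i/N$ between subgroups of $G$ containing $H$ and subgroups of $\cG$ containing $\cH$, plus the translation (valid because $N\leq H$) that $\cl_{G_i}(\tau)\cap H=\emptyset$ if and only if $\cl_{\cG_i}(\bar\tau)\cap\cH=\emptyset$. Your handling of the strong case and of the direction ``$\cG/\cH$ $\ell$-divisible implies $G/H$ $\ell$-divisible with $\delta_\ell(G/H)\leq\delta_\ell(\cG/\cH)$'' is correct, and indeed more careful than the paper, which asserts the element-by-element equivalence of the two definitions without addressing how elements of order $\ell$ move between $G$ and $\cG$.

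The gap is the reverse inequality $\delta_\ell(\cG/\cH)\leq\delta_\ell(G/H)$ in the plain (non-strong) case: you correctly isolate the obstruction, but your proposed patch fails. The $\ell$-primary part of a lift of $\bar\sigma$ is an $\ell$-element $\sigma\in G-H$ with $\sigma^\ell\in H$, but there may be no way to descend to order exactly $\ell$ while staying outside $H$; the ``take powers and transport by conjugation'' manoeuvre from the proof of Theorem~\ref{thm:main1} relies on $H$ containing no nontrivial normal subgroup of $G$, whereas here $H$ contains $N\trianglelefteq G$ by hypothesis. No patch exists, because the inequality is false: for $G=\bZ/\ell^2\bZ$ and $N=H=\ell\bZ/\ell^2\bZ$, every element of order $\ell$ in $G$ lies in $H$, so $\delta_\ell(G/H)=0$ vacuously, while $\cG/\cH\simeq(\bZ/\ell\bZ)/1$ has $\delta_\ell(\cG/\cH)=1$. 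The ``only if'' clause fails as well: with $\ell=2$, take $G$ the dicyclic group $\langle a,b\mid a^6,\,b^2a^{-3},\,bab^{-1}a\rangle$ of order $12$, $N=\{1,a^3\}$ its centre, and $H=\langle b\rangle$; then $G-H$ contains no element of order $2$ (the only involution is $a^3\in N$), so $G/H$ is vacuously $2$-divisible, yet $\cG/\cH\simeq D_3/\langle s\rangle$ is not $2$-divisible by Corollary~\ref{cor:dihedral}. The paper's proof commits the same unjustified lift, so the defect lies in the lemma and not only in your write-up; what is true (and what Theorem~\ref{thm:fieldtower} actually needs) is the one-sided statement $\delta_\ell(G/H)\leq\delta_\ell(\cG/\cH)$ together with the full two-sided claim for strong $\ell$-divisibility, both of which your argument does establish.
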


\begin{proof}The lattice isomorphism theorem gives a bijective correspondence between subgroups $H\leq G_1,...,G_n\leq G$ and subgroups $\cH\leq\cG_1,...,\cG_n\leq \cG$, where $\cG_i=G_i/N$. Since $N\leq H$, for any $\sigma\in G$ we have $\cl_G(\sigma)\subset H$ if and only if $\cl_{\cG}(\sigma N)\subset \cH$, and $\cl_{G_i}(\sigma)\cap H=\emptyset$ if and only if $\cl_{\cG_i}(\sigma N)\cap \cH=\emptyset$. These two equivalences show that Definition~\ref{def:divisible} or Definition~\ref{def:Divisible} holds for $H\leq G_1,...,G_\delta\leq G$ if and only if it holds for $\cH\leq\cG_1,...,\cG_\delta\leq\cG$. Thus $\delta_\ell(G/H)=\delta_\ell(\cG/\cH)$ and, since $[G_i:H] = [\cG_i:\cH]$, $\Delta_\ell(G/H)=\Delta_\ell(\cG/\cH)$.\end{proof}

The last lemma shows that $G/H$ in Definitions~\ref{def:divisible} and \ref{def:Divisible} could be replaced with $\Gal(L/F)$, where $L$ is \textit{any} normal extension of $F$ containing $K$, not necessarily the normal closure of $K/F$. This allows us to rephrase Lemma~\ref{lem:grouptower} in terms of fields. 

\begin{theorem}\label{thm:fieldtower}Let $F=F_0\subseteq F_1\subseteq\cdots\subseteq F_n=K$ be number fields. If $F_i/F_{i-1}$ is $\ell$-divisible for all $i$, then $K/F$ is $\ell$-divisible with $$\delta_\ell(K/F)\leq\sum_{i=1}^n\delta_\ell(F_i/F_{i-1}).$$ The analogous claim holds for strong $\ell$-divisibility with $$\Delta_\ell(K/F)\leq\sum_{i=1}^n[K:F_i]\Delta_\ell(F_i/F_{i-1}).$$\end{theorem}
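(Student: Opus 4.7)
The plan is to deduce this from Lemma~\ref{lem:grouptower} by translating the chain of field extensions into a chain of subgroups of a single Galois group via the Galois correspondence. The key enabling observation, already emphasized by the author after Lemma~\ref{lem:quo}, is that the pair representing $\Gal(F_i/F_{i-1})$ can be realized inside any normal extension of $F_{i-1}$ containing $F_i$---not just the normal closure of $F_i/F_{i-1}$---without changing any of the $\ell$-divisibility invariants.

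First I would fix a normal closure $L$ of $K/F$ and set $G = \Gal(L/F)$. For each $i = 0, 1, \ldots, n$, define $H_i = \Gal(L/F_{n-i})$, producing a chain
$$H_0 = \Gal(L/K) \leq H_1 \leq \cdots \leq H_n = G$$
matching the setup of Lemma~\ref{lem:grouptower}. Since $L/F$ is normal, so is $L/F_{n-i}$ for each $i$, hence $L$ is a normal extension of $F_{n-i}$ containing $F_{n-i+1}$. Thus the pair $H_i/H_{i-1} = \Gal(L/F_{n-i})/\Gal(L/F_{n-i+1})$ is a presentation of $\Gal(F_{n-i+1}/F_{n-i})$ in the paper's sense, possibly via a larger ambient normal extension than the minimal one. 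By Lemma~\ref{lem:quo} this alternative presentation is $\ell$-divisible (respectively strongly $\ell$-divisible) precisely when $F_{n-i+1}/F_{n-i}$ is, with the same values of $\delta_\ell$ (respectively $\Delta_\ell$). The hypothesis therefore delivers $\ell$-divisibility of each $H_i/H_{i-1}$.

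Applying Lemma~\ref{lem:grouptower} now yields $\ell$-divisibility of $G/H_0 = \Gal(K/F)$, together with
$$\delta_\ell(K/F) \leq \sum_{i=1}^n \delta_\ell(H_i/H_{i-1}) = \sum_{i=1}^n \delta_\ell(F_{n-i+1}/F_{n-i}),$$
which is the claimed bound after reindexing $j = n - i + 1$. For strong $\ell$-divisibility, Lemma~\ref{lem:grouptower} gives
$$\Delta_\ell(K/F) \leq \sum_{i=1}^n [H_{i-1}:H_0]\,\Delta_\ell(H_i/H_{i-1}),$$
and $[H_{i-1}:H_0] = [\Gal(L/F_{n-i+1}):\Gal(L/K)] = [K:F_{n-i+1}]$ by the Galois correspondence, so the same reindexing produces the stated sum $\sum_{j=1}^n [K:F_j]\,\Delta_\ell(F_j/F_{j-1})$.

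There is no substantial conceptual obstacle: every step is already packaged in Lemmas~\ref{lem:grouptower} and~\ref{lem:quo}. The one point demanding care is that each intermediate extension $F_i/F_{i-1}$ may fail to be normal, so the Galois group $\Gal(F_i/F_{i-1})$ is not literally a quotient of $\Gal(L/F_{i-1})$---Lemma~\ref{lem:quo} is precisely what legitimizes identifying it with $H_{n-i+1}/H_{n-i}$ inside the convenient ambient group $G$, and its preservation of both $\delta_\ell$ and $\Delta_\ell$ is what makes the numerical bounds match.
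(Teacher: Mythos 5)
Your proposal is correct and follows essentially the same route as the paper: pass to the single ambient Galois closure $L$ of $K/F$, use Lemma~\ref{lem:quo} to transfer the $\ell$-divisibility of each step $F_i/F_{i-1}$ (defined via its own minimal normal closure) to the corresponding pair of subgroups of $\Gal(L/F)$, and then apply Lemma~\ref{lem:grouptower}. The only cosmetic difference is that you reverse the indexing to match the ascending chain in Lemma~\ref{lem:grouptower}, whereas the paper keeps the descending chain $H_i=\Gal(L/F_i)$ and notes the reversal parenthetically.
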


\begin{proof}Let $L$ denote the Galois closure of $K/F$, and let $L_i$ denote the Galois closure of $F_i/F_{i-1}$. Also let $G/H=\Gal(K/F)$, and let $H_i/1=\Gal(L/F_i)$. Finally, let $N_i/1=\Gal(L/L_i)$ so that $N_i\trianglelefteq H_i$. The Galois group of $L_i/F_{i-1}$ is isomorphic to $H_{i-1}/N_i$, in which the Galois group of $L_i/F_i$ is embedded as $H_i/N_i$. Our hypothesis is that $F_i/F_{i-1}$, and thus by definition $(H_{i-1}/N_i)/(H_i/N_i)$, is (strongly) $\ell$-divisible. So by Lemma~\ref{lem:quo}, $H_{i-1}/H_{i}$ is (strongly) $\ell$-divisible with $\delta_\ell(H_{i-1}/H_{i}) = \delta_\ell(F_i/F_{i-1})$ and/or $\Delta_\ell(H_{i-1}/H_{i}) = \Delta_\ell(F_i/F_{i-1})$. Thus Lemma~\ref{lem:grouptower} applies, and we conclude that $G/H$ is (strongly) $\ell$-divisible with $$\delta_\ell(K/F)= \delta_\ell(G/H)\leq\sum_{i=1}^n\delta_\ell(H_{i-1}/H_i)=\sum_{i=1}^n\delta_\ell(F_i/F_{i-1})$$ and/or $$\Delta_\ell(K/F)= \Delta_\ell(G/H)\leq\sum_{i=1}^n[H_i:H]\Delta_\ell(H_{i-1}/H_i)=\sum_{i=1}^n[K:F_i]\Delta_\ell(F_i/F_{i-1}).$$ (Note that $H_i\leq H_{i-1}$ unlike the ordering in Lemma~\ref{lem:grouptower}.)\end{proof}

\begin{proposition}\label{prop:Galois}Suppose $K/F$ is a Galois extension of number fields, and let $\sigma_1,...,\sigma_n$ be generators for the distinct subgroups of order $\ell$ in $\Gal(K/F)$. If $n\geq 1$, then $\delta_\ell(K/F) = 1$ and $\Delta_\ell(K/F)\leq\min(n,\frac{1}{\ell}|\langle\sigma_1,...,\sigma_n\rangle|)$.\end{proposition}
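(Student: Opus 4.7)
The plan is to unpack the definitions with $H=1$ (since $K/F$ is Galois) and observe that the condition $\cl_{G_i}(\sigma)\cap H=\emptyset$ is automatic for any nontrivial $\sigma$, so that both bounds reduce to simple covering problems for the elements of order $\ell$ in $G=\Gal(K/F)$.

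First, I would handle $\delta_\ell$. By Definition~\ref{def:divisible} with $H=1$, we may choose a single subgroup $G_1=G$: for any $\sigma\in G-\{1\}$ of order $\ell$, we trivially have $\tau=\sigma\in\cl_G(\sigma)\cap G_1$ and $\cl_{G_1}(\tau)\cap\{1\}=\emptyset$ since $\tau\neq 1$. Hence $\delta_\ell(G/1)\leq 1$, and the hypothesis $n\geq 1$ (an element of order $\ell$ exists) forces $\delta_\ell(G/1)=1$.

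For $\Delta_\ell$, I note that the condition $\sigma^\ell\in H=\{1\}$ together with $|\sigma|=\ell^m$ for some $m$ means precisely that $\sigma$ has order $\ell$. So in Definition~\ref{def:Divisible} I need subgroups $G_1,\ldots,G_\delta$ whose union contains every element of order $\ell$ in $G$ (the disjointness from $H$ being automatic). Two natural choices give the two bounds in the $\min$:
\begin{enumerate}[label=(\alph*)]
\item Take $G_i=\langle\sigma_i\rangle$ for $i=1,\ldots,n$. Every element of order $\ell$ generates one of the $n$ distinct order-$\ell$ subgroups, so lies in exactly one $G_i$. The cost is $\frac{1}{\ell}\sum_{i=1}^n|G_i|=\frac{1}{\ell}\cdot n\ell = n$.
\item Take the single subgroup $G_1=\langle\sigma_1,\ldots,\sigma_n\rangle$, which by construction contains every element of order $\ell$. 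The cost is $\frac{1}{\ell}|\langle\sigma_1,\ldots,\sigma_n\rangle|$.
\end{enumerate}
Taking the better of the two yields $\Delta_\ell(K/F)\leq\min\bigl(n,\tfrac{1}{\ell}|\langle\sigma_1,\ldots,\sigma_n\rangle|\bigr)$.

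There is no real obstacle here; the statement is essentially a direct translation of the definitions in the Galois case, where the trivial stabilizer $H=1$ makes the nonemptiness constraint $\cl_{G_i}(\sigma)\cap H=\emptyset$ vacuous. The only thing worth double-checking is that in Definition~\ref{def:Divisible} the combined hypothesis ``$\sigma^\ell\in H$ and $|\sigma|=\ell^m$'' really does collapse to ``$|\sigma|=\ell$'' once $H=1$, which justifies restricting attention to the finitely many order-$\ell$ cyclic subgroups generated by the $\sigma_i$.
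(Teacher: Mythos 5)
Your proposal is correct and matches the paper's argument: with $H=1$ the conjugacy-class conditions are vacuous, the subgroups $\langle\sigma_1\rangle,\ldots,\langle\sigma_n\rangle$ give $\Delta_\ell\leq n$, and the single subgroup $\langle\sigma_1,\ldots,\sigma_n\rangle$ gives the other bound. The only cosmetic difference is that for $\delta_\ell$ you take $G_1=G$ where the paper takes $G_1=\langle\sigma_1,\ldots,\sigma_n\rangle$; both work equally trivially.
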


\begin{proof}In this scenario, $H$ in Definitions~\ref{def:divisible} and \ref{def:Divisible} is the trivial group. Thus the subgroup $\langle\sigma_1,...,\sigma_n\rangle$ shows that $\delta_\ell(K/F)= 1$ and $\Delta_\ell(K/F)\leq \frac{1}{\ell}|\langle\sigma_1,...,\sigma_n\rangle|$. The subgroups $\langle\sigma_1\rangle,...,\langle\sigma_n\rangle$ show that $\Delta_\ell(K/F)\leq n$.\end{proof} 

Combining the result above with Theorem~\ref{thm:fieldtower} immediately proves the following.

\begin{corollary}\label{cor:galoistower}Let $F=F_0\subseteq F_1\subseteq\cdots\subseteq F_n=K$ be number fields. If $F_i/F_{i-1}$ is Galois for all $i$, then $K/F$ is strongly $\ell$-divisible with $\delta_\ell(K/F)\leq n$.\qed\end{corollary}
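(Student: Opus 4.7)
The plan is to stitch together two earlier results that have already done all the conceptual work: Proposition~\ref{prop:Galois}, which handles a single Galois step, and Theorem~\ref{thm:fieldtower}, which propagates (strong) $\ell$-divisibility along a tower while keeping a tractable bound on $\delta_\ell$. Since the corollary's hypothesis is precisely that each step $F_i/F_{i-1}$ is Galois, these two results are exactly matched to the situation, and the proof will amount to verifying that their inputs and outputs line up.

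First I would treat one step at a time. For each $i$, $F_i/F_{i-1}$ is a Galois extension of number fields, so Proposition~\ref{prop:Galois} applies directly. There are two cases depending on whether $\Gal(F_i/F_{i-1})$ contains any element of order $\ell$: if it does, the proposition yields strong $\ell$-divisibility with $\delta_\ell(F_i/F_{i-1}) = 1$; if it does not, then the condition in Definition~\ref{def:Divisible} (and in Definition~\ref{def:divisible}) is vacuously satisfied with an empty collection of subgroups, giving strong $\ell$-divisibility with $\delta_\ell(F_i/F_{i-1}) = 0$. In either case $\delta_\ell(F_i/F_{i-1}) \leq 1$ and the step is strongly $\ell$-divisible.

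Next I would feed these single-step facts into Theorem~\ref{thm:fieldtower}. That theorem takes a tower whose every step is (strongly) $\ell$-divisible and concludes that $K/F$ itself is (strongly) $\ell$-divisible with
\[
\delta_\ell(K/F) \;\leq\; \sum_{i=1}^{n} \delta_\ell(F_i/F_{i-1}) \;\leq\; \sum_{i=1}^{n} 1 \;=\; n,
\]
which is exactly the desired bound. The strong version of the theorem also promotes strong $\ell$-divisibility along the tower, so the extension $K/F$ inherits that property from its steps.

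Realistically, there is no hard step here: the only thing to be careful about is that Proposition~\ref{prop:Galois} really does produce \emph{strong} $\ell$-divisibility (it does, since in the Galois case $H = 1$ and the $G_i$'s exhibited in its proof witness Definition~\ref{def:Divisible} directly), and that the tower theorem's $\delta_\ell$ estimate, rather than the $\Delta_\ell$ estimate, is the relevant one for this corollary. Once both points are observed, no further computation is needed and the corollary follows in one line of combination.
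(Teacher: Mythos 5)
Your proposal is correct and is exactly the paper's argument: the corollary is stated there as an immediate combination of Proposition~\ref{prop:Galois} (each Galois step is strongly $\ell$-divisible with $\delta_\ell\leq 1$) and Theorem~\ref{thm:fieldtower} (summing the $\delta_\ell$'s along the tower). Your extra care about the vacuous case with no order-$\ell$ elements and about using the $\delta_\ell$ rather than the $\Delta_\ell$ estimate matches what the paper leaves implicit.
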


\subsection{Nilpotent extensions} As a consequence of Lemma~\ref{lem:grouptower}, if $G$ has a subnormal series containing $H$, then $G/H$ is strongly $\ell$-divisible. In this section we ask when a subnormal series that may not contain $H$ guarantees $\ell$-divisibility or strong $\ell$-divisibility. The dihedral groups $D_n$ with $4\nmid n$ show that $G$ being solvable is not sufficient. But as Corollary \ref{cor:nilpotent} proves, nilpotence is sufficient.

\begin{lemma}\label{lem:syls}Let $H\leq G$ be groups. If $G/H$ is $\ell$-divisible and $\ell\nmid [G\!:\!H]$, then $H$ contains all Sylow $\ell$-subgroups of $G$.\end{lemma}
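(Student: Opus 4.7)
The plan is to argue by contradiction: assume some Sylow $\ell$-subgroup $P$ of $G$ is not contained in $H$, produce an element of order $\ell$ in $P\setminus H$, and show that the $\ell$-divisibility condition cannot be satisfied for this element because the relevant intermediate groups $G_i$ have all their $\ell$-torsion inside $H$.

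First I would pick any $\sigma\in P\setminus H$. Since $P$ is an $\ell$-group, $\langle\sigma\rangle$ is cyclic of $\ell$-power order, so its proper subgroup $\langle\sigma\rangle\cap H$ has the form $\langle\sigma^{\ell^k}\rangle$ for some $k\geq 1$; then $\tau=\sigma^{\ell^{k-1}}$ is an element of order $\ell$ in $G-H$. Apply the $\ell$-divisibility hypothesis to $\tau$ to obtain some $G_i$ in the list and a conjugate $\tau'\in\cl_G(\tau)\cap G_i$ with $\cl_{G_i}(\tau')\cap H=\emptyset$.

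The key observation is that $[G_i:H]$ divides $[G:H]$, so $\ell\nmid[G_i:H]$. Consequently, the $\ell$-part of $|G_i|=[G_i:H]\cdot|H|$ equals the $\ell$-part of $|H|$, which forces every Sylow $\ell$-subgroup $P'$ of $G_i$ to satisfy $P'\leq H$ (since $|P'H/H|$ is both a power of $\ell$ and a divisor of $[G_i:H]$, hence $1$). Because $\tau'$ has order $\ell$, it lies in some such $P'$, so $\tau'\in H$. But then $\tau'\in\cl_{G_i}(\tau')\cap H$, contradicting $\cl_{G_i}(\tau')\cap H=\emptyset$.

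There is no real obstacle here; the argument is essentially bookkeeping with Sylow theory. The one place to be careful is justifying the existence of an order-$\ell$ element in $P\setminus H$ rather than just an element of $\ell$-power order, since the definition of $\ell$-divisibility only constrains order-$\ell$ elements; this is handled by the cyclic-subgroup argument above.
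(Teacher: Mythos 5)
There is a genuine gap, and it sits exactly at the step you flag as ``handled by the cyclic-subgroup argument.'' If $|\sigma|=\ell^m$ and $\langle\sigma\rangle\cap H=\langle\sigma^{\ell^k}\rangle$ with $k\geq 1$, then $\tau=\sigma^{\ell^{k-1}}$ has order $\ell^{m-k+1}$, which equals $\ell$ only when $k=m$, i.e.\ only when $\langle\sigma\rangle\cap H$ is trivial. In the remaining case the unique order-$\ell$ subgroup of the cyclic $\ell$-group $\langle\sigma\rangle$ is contained in $\langle\sigma\rangle\cap H\leq H$, so $\langle\sigma\rangle$ contains \emph{no} order-$\ell$ element outside $H$; what you have produced is an element with $\tau\notin H$, $\tau^\ell\in H$ and $|\tau|>\ell$, about which Definition~\ref{def:divisible} says nothing (that situation is precisely what Definition~\ref{def:Divisible} is designed to address). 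The defect is not repairable by a cleverer choice of $\sigma$ or $P$: in the dicyclic group of order $12$ with $H$ a cyclic Sylow $2$-subgroup, the unique involution is central and lies in $H$, so $G-H$ contains no element of order $2$ at all, even though $H$ misses the other Sylow $2$-subgroups. This is why the paper does not argue inside $G$: it proves by induction on $n$ that every element of order $\ell^n$ lies in $H$, using your argument only as the base case $n=1$; for the induction step it forms the normal subgroup $N\leq H$ generated by all elements of order $\ell^n$, passes to $G/N$ (where an element of order $\ell^{n+1}$ becomes one of order $\ell$), and invokes Lemma~\ref{lem:quo} to transfer $\ell$-divisibility to the quotient. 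That reduction through quotients is the actual content of the lemma and is absent from your proof.

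A smaller, fixable error: the claim that \emph{every} Sylow $\ell$-subgroup $P'$ of $G_i$ satisfies $P'\leq H$ is false (take $G_i=S_3$ and $H$ generated by one transposition: $[G_i:H]=3$, yet the other two Sylow $2$-subgroups are not contained in $H$); also $P'H$ need not be a subgroup, so ``$|P'H/H|$ divides $[G_i:H]$'' is not justified. What is true is that $\ell\nmid[G_i:H]$ forces a Sylow $\ell$-subgroup of $H$ to be a Sylow $\ell$-subgroup of $G_i$, so $P'$ is $G_i$-conjugate to a subgroup of $H$; since $\tau'$ lies in some $P'$, some $G_i$-conjugate of $\tau'$ lies in $H$, i.e.\ $\cl_{G_i}(\tau')\cap H\neq\emptyset$, which still yields the desired contradiction. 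This is how the paper phrases its base case.
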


\begin{proof}We will use induction on $n$ to prove that if $\sigma\in G$ has order $\ell^n$ for some $n$, then $\sigma\in H$. Consider the case $n=1$. (We begin here rather than $j=0$ because this case is used to prove the induction step.) Suppose $\sigma\in G$ has order $\ell$, and assume for the sake of contradiction that $\sigma\not\in H$. By $\ell$-divisibility, there exists $G'\leq G$ containing $H$ such that $\cl_{G'}(\tau)\cap H=\emptyset$ for some $\tau\in \cl_G(\sigma)\cap G'$. Since $\cl_{G'}(\tau)$ intersects every Sylow $\ell$-subgroup of $G'$ nontrivially while being disjoint from $H$, a Sylow $\ell$-subgroup of $H$ cannot also be a Sylow $\ell$-subgroup of $G_1$. In other words, $\ell\;|\;[G'\!:\!H]$, which contradicts $\ell\nmid [G\!:\!H]$. 
    
For the induction step, fix some $n\geq 1$, and assume $\sigma\in H$ for all $\sigma\in G$ with $|\sigma|=\ell^n$. In particular, the subgroup generated by a such $\sigma$ is contained in $H$. Call this subgroup $N$ and observe that $N\trianglelefteq G$. Let $\cG=G/N$ and $\cH=H/N$. Consider some $\sigma\in G$ with $|\sigma|=\ell^{n+1}$. If $\sigma\in N\leq H$ we are done. Otherwise, since $\sigma^{\ell}\in N$, we see that $\sigma N$ has order $\ell$ in $\cG$. Note that $[\cG\!:\!\cH]=[G\!:\!H]$, which is not divisible by $\ell$. Furthermore, Lemma~\ref{lem:quo} tells us $\cG/\cH$ is $\ell$-divisible. Thus the case $n=1$ from the previous paragraph applies to $\sigma N\in \cG$, the conclusion being $\sigma N\in \cH$. This implies $\sigma\in NH = H$.\end{proof}

\begin{lemma}\label{lem:oneway}Let $H,N\leq G$ with $N$ normal, and let $\cG=G/N$ and $\cH=HN/N$. If $\cG/\cH$ and $HN/H$ are $\ell$-divisible, then so is $G/H$ with $\delta_\ell(G/H)\leq\delta_\ell(\cG/\cH)+\delta_\ell(HN/H)$. The analogous claim holds for strong divisibility with $\Delta_\ell(G/H)\leq \Delta_\ell(HN/H)+[HN:H]\Delta_\ell(\cG/\cH)$.\end{lemma}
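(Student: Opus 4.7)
The plan is to do a case split on whether the troublesome element $\sigma\in G-H$ lies inside $HN$ or not, and then to concatenate the two given families of subgroups into one family witnessing (strong) $\ell$-divisibility of $G/H$.

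In the first case, when $\sigma\in HN-H$, the subgroups $H\leq G'\leq HN$ from the collection for $HN/H$ already suffice: for the ordinary case, the $\tau\in\cl_{HN}(\sigma)\cap G'$ with $\cl_{G'}(\tau)\cap H=\emptyset$ provided by $\ell$-divisibility of $HN/H$ automatically lies in $\cl_G(\sigma)\cap G'$; for the strong case, the hypotheses $\sigma^\ell\in H$ and $|\sigma|=\ell^n$ pass verbatim, so $\sigma$ itself is admissible. In the second case, when $\sigma\in G-HN$, I project to $\cG$: then $\sigma N\in\cG-\cH$ (since $\sigma\notin HN$ means $\sigma N\notin HN/N=\cH$), and $|\sigma N|$ is a power of $\ell$ (equal to $\ell$ in the ordinary case, being nontrivial of order dividing $\ell$; and satisfying $(\sigma N)^\ell=\sigma^\ell N\in\cH$ in the strong case). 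Applying $\ell$-divisibility of $\cG/\cH$ yields a subgroup $\cH\leq\cG'\leq\cG$ and an appropriate class representative $\tau N$ (or $\sigma N$ itself) whose $\cG'$-conjugacy class avoids $\cH$. I then lift $\cG'$ to its preimage $G'\leq G$ under the quotient $G\to\cG$, which automatically contains $HN$ and thus $H$; in the ordinary case I take the concrete lift $\tau=\pi\sigma\pi^{-1}$ for any $\pi\in G$ realizing the conjugacy $\tau N=(\pi N)(\sigma N)(\pi N)^{-1}$, which lies in $G'$ because $N\leq G'$.

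The crux is then verifying that $\cl_{G'}(\tau)\cap H=\emptyset$ (respectively $\cl_{G'}(\sigma)\cap H=\emptyset$): any putative conjugate $\rho\tau\rho^{-1}\in H$ with $\rho\in G'$ would reduce modulo $N$ to an element of $\cl_{\cG'}(\tau N)\cap\cH$, contradicting the choice of $\cG'$. Counting the merged family gives $\delta_\ell(HN/H)+\delta_\ell(\cG/\cH)$ subgroups of $G$, which is the $\delta$ bound. For the $\Delta$ bound, the tower $H\leq HN\leq G'$ gives $[G':H]=[\cG':\cH]\,[HN:H]$, so the lifted collection contributes $[HN:H]\,\Delta_\ell(\cG/\cH)$ to $\tfrac{1}{\ell}\sum_i[G_i:H]$, while the $HN/H$ collection contributes $\Delta_\ell(HN/H)$, matching the claim. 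The main subtlety to watch for is, in the ordinary case, ensuring the lifted element is a genuine $G$-conjugate of $\sigma$ rather than merely congruent to one modulo $N$; the explicit choice $\tau=\pi\sigma\pi^{-1}$ sidesteps this, and the containment $N\leq G'$ makes $\tau\in G'$ automatic.
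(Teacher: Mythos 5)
Your proof is correct and is essentially the paper's argument unpacked: the paper simply cites Lemma~\ref{lem:quo} (your lifting of subgroups and conjugacy classes through the quotient $G\to G/N$, applied to $N\leq HN\leq G$ to get $\delta_\ell(G/HN)=\delta_\ell(\cG/\cH)$ and $\Delta_\ell(G/HN)=\Delta_\ell(\cG/\cH)$) together with Lemma~\ref{lem:grouptower} applied to the tower $H\leq HN\leq G$ (your case split and the merging/counting of the two families). The only cosmetic difference is that the paper's tower lemma splits on whether $\cl_G(\sigma)\subset HN$ rather than on whether $\sigma\in HN$, but both partitions work.
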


\begin{proof}By Lemma~\ref{lem:quo}, $\delta_\ell(G/HN)=\delta_\ell(\cG/\cH)$ and $\Delta_\ell(G/HN)=\Delta_\ell(\cG/\cH)$. The claim then follows by applying Lemma~\ref{lem:grouptower} to $H\leq HN\leq G$.\end{proof}

Note that if $N$ normalizes $H$ then strong $\ell$-divisibility of $HN/H$ is automatic. As a special case of this, we see that (strong) $\ell$-divisibility is preserved by central extensions, meaning if $N$ is in the center of $G$, (strong) $\ell$-divisibility of $G/H$ is implied by that of $\cG/\cH$. Hence the next theorem, which justifies Corollary~\ref{cor:intro2} from the introduction.

\begin{theorem}\label{thm:normalS}Let $G$ be a group and $\ell$ a prime. The following are equivalent: \begin{enumerate}[label=(\alph*)]
\item $G/H$ is strongly $\ell$-divisible for all subgroups $H\leq G$,
\item $G/H$ is $\ell$-divisible for all subgroups $H\leq G$,
\item $G/S$ is strongly $\ell$-divisible for some Sylow $\ell$-subgroup $S\leq G$,
\item $G/S$ is $\ell$-divisible for some Sylow $\ell$-subgroup $S\leq G$,
\item $G$ has a unique Sylow $\ell$-subgroup.\end{enumerate}
If these conditions hold and $1=S_0\trianglelefteq S_1\trianglelefteq\cdots \trianglelefteq S_n =S$ is a central series for the Sylow $\ell$-subgroup $S\leq G$, then $\delta_\ell(G/H)$ is at most the number of proper containments in the chain $HS_0\leq\cdots\leq HS_n$.\end{theorem}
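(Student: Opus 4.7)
The plan is to establish the cycle (e) $\Rightarrow$ (a) $\Rightarrow$ (b) $\Rightarrow$ (d) $\Rightarrow$ (e) together with (a) $\Rightarrow$ (c) $\Rightarrow$ (d). The implications (a) $\Rightarrow$ (b) and (c) $\Rightarrow$ (d) are essentially tautological: an element $\sigma\in G-H$ of order $\ell$ satisfies $\sigma^\ell=1\in H$ with $|\sigma|$ an $\ell$-power, so a strong-divisibility witness $G_i$ containing $\sigma$ with $\cl_{G_i}(\sigma)\cap H=\emptyset$ meets Definition~\ref{def:divisible} directly by taking $\tau=\sigma$. The arrows (a) $\Rightarrow$ (c) and (b) $\Rightarrow$ (d) come from specializing $H=S$. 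For (d) $\Rightarrow$ (e), since $\ell\nmid[G:S]$ for any Sylow $\ell$-subgroup, Lemma~\ref{lem:syls} forces $S$ to contain every Sylow $\ell$-subgroup of $G$; cardinality then forces uniqueness.

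The main obstacle is (e) $\Rightarrow$ (a), which also supplies the quantitative bound on $\delta_\ell(G/H)$. Assuming $G$ has a unique (hence normal) Sylow $\ell$-subgroup $S$, I would fix a central series $1=S_0\trianglelefteq S_1\trianglelefteq\cdots\trianglelefteq S_n=S$ of characteristic subgroups of $S$ (for instance the upper central series); each $S_i$ is then characteristic in $S$ and hence normal in $G$, so every $HS_i$ is a subgroup. For arbitrary $H\leq G$, my candidate $G_j$'s in Definition~\ref{def:Divisible} will be the distinct subgroups appearing in the chain $H=HS_0\leq HS_1\leq\cdots\leq HS_n$. Given $\sigma\in G-H$ with $\sigma^\ell\in H$ and $|\sigma|$ a power of $\ell$, the $\ell$-power order forces $\sigma\in S\leq HS_n$, so I would let $i\geq 1$ be minimal with $\sigma\in HS_i$ and use this $HS_i$ as the witness.

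The heart of the argument is verifying $\cl_{HS_i}(\sigma)\cap H=\emptyset$. Suppose toward contradiction that $hs\sigma s^{-1}h^{-1}\in H$ for some $h\in H$ and $s\in S_i$. Setting $\tau=s\sigma s^{-1}$, I would compute $\tau=h^{-1}(hs\sigma s^{-1}h^{-1})h\in h^{-1}Hh=H$, while also $\tau\in S$ since $s,\sigma\in S$. The crucial step is now that $S_i/S_{i-1}$ is central in $S/S_{i-1}$, so the cosets $sS_{i-1}$ and $\tau S_{i-1}$ commute, yielding $\sigma=s^{-1}\tau s\in \tau S_{i-1}\subseteq HS_{i-1}$ and contradicting the minimality of $i$. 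This establishes the strong $\ell$-divisibility of $G/H$; since the same $G_j$'s witness $\ell$-divisibility via $\tau=\sigma$, $\delta_\ell(G/H)$ is bounded by the number of $G_j$'s used, namely the number of proper containments in the chain $HS_0\leq\cdots\leq HS_n$.
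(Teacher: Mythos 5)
Your proof is correct, and for the main implication (e) $\Rightarrow$ (a) it takes a genuinely different and more self-contained route than the paper. The paper proves (e) $\Rightarrow$ (a) by downward induction along the central series, working in the quotients $G/S_i$ and invoking Lemma~\ref{lem:quo} and Lemma~\ref{lem:oneway} (the central-extension step) to add at most one witness subgroup per proper containment. You instead exhibit the witnesses explicitly as the distinct terms of the chain $H=HS_0\leq HS_1\leq\cdots\leq HS_n$ and verify the conjugacy condition directly: taking $i$ minimal with $\sigma\in HS_i$, a conjugate $hs\sigma s^{-1}h^{-1}\in H$ would force $\tau=s\sigma s^{-1}\in H\cap S$, and centrality of $S_i/S_{i-1}$ in $S/S_{i-1}$ gives $\sigma=s^{-1}\tau s\in\tau S_{i-1}\subseteq HS_{i-1}$, contradicting minimality. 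This direct argument avoids the appendix's lemma infrastructure entirely, makes the witness subgroups concrete, and yields the same bound on $\delta_\ell(G/H)$; the paper's inductive formulation, by contrast, is structured to reuse Lemmas~\ref{lem:quo} and \ref{lem:oneway}, which it needs elsewhere anyway. The remaining implications are handled essentially as in the paper ((a) dominates, everything implies (d), and (d) $\Rightarrow$ (e) via Lemma~\ref{lem:syls}). One small point in your favor: you explicitly take the central series to consist of characteristic subgroups of $S$ (e.g., the upper central series) so that each $S_i$ is normal in $G$ and $HS_i$ is a genuine subgroup; the paper's proof tacitly makes the same assumption even though the theorem statement allows an arbitrary central series.
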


\begin{proof} Among (a)--(d), all are evidently implied by (a), and all evidently imply (d). Furthermore, that (d) implies (e) is a direct application of Lemma~\ref{lem:syls}. So to prove that (a)--(e) are equivalent, it suffices to show that (e) implies (a).

Assume (e) holds. Let $S$ be the unique Sylow $\ell$-subgroup of $G$, and let $1 =S_0 \trianglelefteq S_1 \trianglelefteq \cdots \trianglelefteq S_n = S$ be a central series for $S$. Letting $\cG_i=G/S_i$ and $\cH_i=HS_i/S_i$ (note that each $S_i$ is a characteristic subgroup of $G$ and is therefore normal), we claim that $\cG_i/\cH_i$ is strongly $\ell$-divisible for all $i$, and that \begin{equation}\label{eq:delta1}\delta_\ell(\cG_i/\cH_i)\leq\begin{cases}\delta_\ell(\cG_{i+1}/\cH_{i+1}) & HS_i=HS_{i+1}\\ \delta_\ell(\cG_{i+1}/\cH_{i+1})+1 & HS_i\neq HS_{i+1}.\end{cases}\end{equation} Once proved, these claims imply the entire theorem.

We use induction in reverse order on $i$. The base case is $i=n$. We have $\cG_n=G/S$, which has no elements of order $\ell$. This makes strong $\ell$-divisibility automatic with $\delta_\ell(\cG_n/\cH_n) = 0$. For the induction step, assume $\cG_{i+1}/\cH_{i+1}$ is strongly $\ell$-divisible. Set $\cN=S_{i+1}/S_i$ so that the canonical isomorphism $\cG_{i+1}\to \cG_i/\cN$ maps $\cH_{i+1}$ to $\cH_i\cN/\cN$. By definition of a central series, $\cN$ lies in the center of $\cG_i$. In particular, $\cH_i$ is normal in $\cH_i\cN$, so either $\delta(\cH_i\cN/\cH_i)=0$ if $\ell\nmid[\cH_i\cN:\cH_i]$ or $\delta(\cH_i\cN/\cH_i)=1$ if $\ell\,|\,[\cH_i\cN:\cH_i]$ (the latter being the group-theoretic phrasing of Proposition~\ref{prop:Galois}). Therefore, by Lemma~\ref{lem:oneway} and the induction hypothesis, $\cG_i/\cH_i$ is strongly $\ell$-divisible with \begin{equation}\label{eq:delta2}\delta_\ell(\cG_i/\cH_i)\leq\begin{cases}\delta(\cG_{i+1}/\cH_{i+1}) & \ell\nmid[\cH_i\cN\!:\!\cH_i]\\ \delta(\cG_{i+1}/\cH_{i+1})+1 & \ell\;|\;[\cH_i\cN\!:\!\cH_i].\end{cases}.\end{equation} Since $[\cH_i\cN\!:\!\cH_i] = [HS_{i+1}\!:\!HS_i]$ (which divides $[S_{i+1}\!:\!S_i]$ and is therefore a power of $\ell$), we see that (\ref{eq:delta1}) and (\ref{eq:delta2}) agree.\end{proof}

\begin{corollary}\label{cor:nilpotent}Let $G/H=\Gal(K/F)$. If $G$ is nilpotent, then $K/F$ is strongly $\ell$-divisible for every prime $\ell$ with $\delta_{\ell}(K/F)$ at most the nilpotency class of $G$.\end{corollary}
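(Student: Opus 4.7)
The plan is to deduce this immediately from Theorem~\ref{thm:normalS} together with the standard fact that a finite nilpotent group is the direct product of its Sylow subgroups. Fix a prime $\ell$. Because $G$ is nilpotent, its Sylow $\ell$-subgroup $S$ is unique (it is the direct factor corresponding to $\ell$), so hypothesis (e) of Theorem~\ref{thm:normalS} holds. That theorem therefore gives condition (a): $G/H$ is strongly $\ell$-divisible. Translating through the definition $\Gal(K/F) = G/H$, this is the same as saying $K/F$ is strongly $\ell$-divisible.

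For the numerical bound, I would again invoke the final clause of Theorem~\ref{thm:normalS}: if $1 = S_0 \trianglelefteq S_1 \trianglelefteq \cdots \trianglelefteq S_n = S$ is a central series for $S$, then $\delta_\ell(G/H)$ is bounded by the number of strict inclusions in the chain $HS_0 \leq HS_1 \leq \cdots \leq HS_n$, which is trivially at most $n$. The shortest central series for $S$ has length equal to the nilpotency class of $S$. Since $G \cong \prod_p S_p$ is a direct product of its Sylow subgroups, the nilpotency class of $G$ equals $\max_p (\text{nilpotency class of } S_p)$, so in particular the nilpotency class of $S = S_\ell$ is at most that of $G$. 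Picking $n$ to be the nilpotency class of $S$ then yields $\delta_\ell(K/F) = \delta_\ell(G/H) \leq n \leq \text{nilpotency class of } G$.

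There is really no obstacle here: the whole content is packaged in Theorem~\ref{thm:normalS}, and the corollary is little more than the observation that "nilpotent" is exactly the group-theoretic condition that simultaneously furnishes a unique Sylow $\ell$-subgroup for every $\ell$ and a central series whose length is controlled by the nilpotency class. The only point to verify carefully is the comparison between the nilpotency class of $G$ and that of its Sylow $\ell$-subgroup, which follows from the direct-product decomposition and the fact that nilpotency class behaves as a maximum under finite direct products.
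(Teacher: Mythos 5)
Your proof is correct and follows exactly the paper's own route: invoke condition (e) of Theorem~\ref{thm:normalS} via the uniqueness of Sylow subgroups in a nilpotent group, and bound $\delta_\ell$ using the final clause of that theorem together with the fact that the nilpotency class of each Sylow subgroup is at most that of $G$. Nothing to add.
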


\begin{proof}This follows immediately from Theorem~\ref{thm:normalS} and the fact that a group $G$ is nilpotent if and only if each of its Sylow subgroups is unique, in which case the nilpotency class of each Sylow subgroup is at most that of the group.\end{proof}

\subsection{Semidirect products} Let $D_n=\langle r,s\mid r^n,s^2,(rs)^2\rangle$, the dihedral group of order $2n$. The introduction claims that $D_n/\langle s\rangle$ (or, equivalently, $D_n/\langle r^is\rangle$ for any $i$) is 2-divisible if and only if $4\,|\,n$. The theorem below identifies what separates these two cases: the automorphism of $\langle r\rangle$ defined by $r^i\mapsto r^{-i}$ restricts to the identity map on the Sylow 2-subgroup of $\langle r\rangle$ precisely when $4\nmid n$.

For a nilpotent group $G$, we use $\nil(G)$ to denote the nilpotency class of $G$.

\begin{theorem}\label{thm:semidirect}Let $G=N\rtimes_{\varphi}H$ for some $\varphi:H\to\textup{Aut}(N)$. If $\varphi(H)$ and $N$ have unique Sylow $\ell$-subgroups $S$ and $T$ such that the restriction map $\varphi(h)\mapsto\varphi(h)|_T$ is injective on $S$, then $G/H$ is $\ell$-divisible with $\delta_\ell(G/H)\leq \nil(S)\nil(T)+1$.\end{theorem}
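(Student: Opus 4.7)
The plan is to exhibit at most $\nil(S)\nil(T)+1$ subgroups of $G$ containing $H$ that satisfy Definition~\ref{def:divisible}. Since $G=NH$ with $N\cap H=1$, every subgroup of $G$ containing $H$ takes the form $N'\rtimes H$ for some $\varphi(H)$-invariant $N'\leq N$, so the search is restricted to such subgroups.

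The ``$+1$'' is contributed by $G_0=G$ itself. It covers any $\sigma\in G-H$ of order $\ell$ whose $G$-conjugacy class is disjoint from $H$; in particular, any $\sigma\in N$ of order $\ell$ lies in $T$, and since $N$ is normal in $G$ with $N\cap H=1$, we automatically have $\cl_G(\sigma)\cap H=\emptyset$. For the remaining $\sigma=nh$ with $h\neq 1$ and $\cl_G(\sigma)\cap H\neq\emptyset$, a $G$-conjugate lies in $H$, so after conjugation we may assume $\sigma=h'\in H$ of order $\ell$; then $\varphi(h')\in S\cup\{1\}$. When $\varphi(h')=1$ the element $h'$ commutes with $N$ and the required conjugate is handled uniformly; otherwise the injectivity hypothesis supplies $t\in T$ with $\varphi(h')(t)\neq t$, producing a twisted conjugate $\tau=th't^{-1}=uh'$ with $u=t\varphi(h')(t^{-1})\in T\setminus\{1\}$, landing inside $T\rtimes H$.

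The remaining $\nil(S)\nil(T)$ subgroups would be organized along the product of characteristic central series $1=T_0\trianglelefteq\cdots\trianglelefteq T_{\nil(T)}=T$ and $1=S_0\trianglelefteq\cdots\trianglelefteq S_{\nil(S)}=S$. Each $T_i$ is characteristic in $T$ and hence $\varphi(H)$-invariant, making $G_{i,j}:=T_i\rtimes H$ a valid subgroup of $G$ containing $H$; the $S$-index is used to refine the choice of the conjugator $n$, so that the resulting $u=n\varphi(h')(n^{-1})$ descends through the $T$-filtration at the correct rate in terms of which step $S_j\setminus S_{j-1}$ contains $\varphi(h')$. The extra $S$-filtration (rather than just a $T$-filtration) is needed because iterating commutators with $\varphi(h')$ in a non-abelian $S$ forces an additional layer of indexing.

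The main obstacle---and where the product structure $\nil(S)\nil(T)$ really arises---is verifying $\cl_{G_{i,j}}(\tau)\cap H=\emptyset$ for the chosen indices. A direct expansion shows that a $G_{i,j}$-conjugate of $\tau=uh'$ by $vk$ with $v\in T_i$ and $k\in H$ lands in $H$ exactly when $\varphi(k)(u)=v^{-1}\varphi(kh'k^{-1})(v)$. The plan is to use the $T$-filtration to confine the right-hand side (a ``Lang image'' of $v\in T_i$) to $T_{i-1}$, and to use the $S$-filtration together with the injectivity hypothesis to force $\varphi(k)(u)$ to remain outside $T_{i-1}$ for every $k\in H$. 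Carrying out this joint filtration bookkeeping---most naturally by induction on the pair $(i,j)$, with the base case reducing to the injectivity hypothesis applied to the innermost central quotient $S_1$ acting on $T/T_{\nil(T)-1}$---is the principal technical step.
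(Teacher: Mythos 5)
Your top-level architecture matches the paper's: the extra subgroup $G$ accounts for classes disjoint from $H$, the problem reduces to $\sigma\in H$ of order $\ell$ with $\varphi(\sigma)|_T\neq\mathrm{id}$ (this is where the injectivity hypothesis is spent, and the paper likewise reduces to $N=T$), every candidate subgroup has the form $N'\rtimes H$, and your conjugation criterion $\varphi(k)(u)=v^{-1}\varphi(kh'k^{-1})(v)$ is the same identity the paper verifies. But the part you defer as ``the principal technical step'' is the actual content of the theorem, and the specific plan you sketch for it does not work. First, your subgroups $G_{i,j}:=T_i\rtimes H$ depend only on $i$, so you have produced only $\nil(T)$ subgroups while claiming to need $\nil(S)\nil(T)$; the $S$-index cannot be absorbed into ``the choice of the conjugator'' because Definition~\ref{def:divisible} requires a fixed list of subgroups that works for all $\sigma$ simultaneously. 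Second, and more fatally, a central (even characteristic) series of $T$ does not confine the Lang image: $v\mapsto v^{-1}\varphi(kh'k^{-1})(v)$ maps $T_i$ into $T_{i-1}$ only if $\varphi(kh'k^{-1})$ acts trivially on $T_i/T_{i-1}$, and nothing forces $\varphi(H)$ to act trivially on the successive quotients of a central series of $T$. The nilpotency of $T$ controls commutators inside $T$, not the outer action of $H$.

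The paper gets around exactly this obstruction by a double induction that never uses a full central series of $T$ as the subgroup filtration. It splits on whether $\varphi(\sigma)$ acts nontrivially on $T/Z(T)$ (inducting on $\nil(T)$ via the quotient group $T/Z(T)\rtimes H$, contributing $\nil(S)(\nil(T)-1)$ subgroups), and when $\varphi(\sigma)$ is trivial on $T/Z(T)$ it works inside the abelian group $Z(T)$ viewed as a module over the group ring $R[\varphi(H)]$. There, $\nil(S)$ enters through the nilpotent ideal $I=\ker\bigl(R[\varphi(H)]\to R[\varphi(H)/Z(S)]\bigr)$ and its annihilator submodule $\Ann(I)\leq Z(T)$: one more induction step (on $\nil(S)$, via $\Ann(I)\rtimes H/\varphi^{-1}(Z(S))$) handles $\sigma$ acting nontrivially on $\Ann(I)$, and the final single subgroup $\Ann(I)\rtimes H$ handles the rest, using a carefully constructed conjugator $\pi$ (chosen so that $1\neq\pi\varphi(\sigma)(\pi^{-1})\in\Ann(I)$, which requires the nilpotency of $I$, not of $T$) to make the Lang image identically trivial rather than merely one filtration level lower. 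None of this module-theoretic machinery, nor any workable substitute for it, appears in your sketch, so the proof as proposed has a genuine gap at its core.
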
 

\begin{proof}Throughout the proof, we do not distinguish $H$ and $N$ from their canonical copies in $G$. 

Let $n_S=\nil(S)$, $n_T=\nil(T)$, and $n=n_Sn_T$. We will prove by induction on both $n_S$ and $n_T$ that there exist subgroups $G_1,...,G_{n_Sn_T}\leq G$ containing $H$ such that for any $\sigma\in H$ of order $\ell$ satisfying $\cl_G(\sigma)\not\subset H$, there exists some $G_i$ and $\tau\in\cl_G(\sigma)\cap G_i$ with $\cl_{G_i}\cap H=\emptyset$. Once proved, this implies the theorem: the only remaining $\sigma\in G-H$ of order $\ell$ are those (that may not exist) for which $\cl_G(\sigma)\cap H=\emptyset$, so at most one additional subgroup is needed to satisfy Definition~\ref{def:divisible}, namely $G$. 

We begin by observing that no generality is lost in assuming $N=T$. Indeed, if $\sigma\in H$ has order $\ell$, then $\varphi(\sigma)\in S$. So if $\cl_G(\sigma)\not\subset H$, which is equivalent to $\varphi(\sigma)\neq\text{Id}_N$, then by hypothesis $\varphi(\sigma)|_T\neq\text{Id}_T$, which is equivalent to $\cl_T(\sigma)\not\subset H$. But then if we find a subgroup $G_i$ of $T\rtimes_{\varphi}H$ for which $\cl_{G_i}(\tau)\cap H=\emptyset$ for some $\tau\in G_i$ that is $T\rtimes_{\varphi}H$-conjugate to $\sigma$, we have achieved the same thing in the potentially larger group $N\rtimes_{\varphi}H$ by taking the same $G_i$ and $\tau$. As such, we henceforth assume $G=T\rtimes_{\varphi}H$. This has the affect of making the injectivity hypothesis in the theorem statement vacuous for $G$ and its subgroups and quotients that are to be used in the induction proof.

The base case is $n_S=0$ or $n_T=0$, meaning $S$ or $T$ is the trivial group. Then there are no elements $\sigma\in H$ of order $\ell$ for which $\cl_G(\sigma)\not\subset H$, so we need $0=n_Sn_T$ subgroups, as claimed.

Now suppose $n_S,n_T\geq 1$, and consider some $\sigma\in H$ of order $\ell$ such that $\varphi(\sigma)$ does not descend to the identity on $T/Z(T)$, where $Z(T)$ is the center of $T$. This is equivalent to $\cl_{\cG}(\sigma)\not\subset H$, where $\cG=T/Z(T)\rtimes_{\varphi}H$. By the induction hypothesis, at least one from among $\nil(S)\nil(T/Z(T))=n_S(n_T-1)$ select subgroups of $\cG$ containing $H$, call it $\cG_i$, must satisfy $\cl_{\cG_i}(\tau Z(T))\cap H=\emptyset$ for some $\tau Z(T)\in\cl_\cG(\sigma Z(T))\cap \cG_i$. Assume without loss of generality that $\tau\in \cl_G(\sigma)$ (if not, pick a different coset representative of $\tau Z(T)$), and pull back $\cG_i$ via the quotient map $G\to \cG$. This gives the desired $G_i\leq G$ containing $H$ and $\tau$ with $\cl_{G_i}(\tau)\cap H=\emptyset$.

The previous paragraph produced $n_S(n_T-1)$ subgroups of $G$ out of our allotment of $n_Sn_T$. So we must now show that at most $n_S$ subgroups of $G$ are needed for those $\cl_G(\sigma)$ for which $\varphi(\sigma)$ descends to the identity on $T/Z(T)$. Let $R=\bZ/|T|\bZ$, let $R[\varphi(H)]$ denote the group ring written with additive notation, and let $I$ denote the kernel of the projection map $R[\varphi(H)]\to R[\varphi(H)/Z(S)]$. View $Z(T)$ as a left $R[\varphi(H)]$-module via $$\sum_{\sigma\in H}a_\sigma\varphi(\sigma)\cdot \tau = \prod_{\sigma\in H}\varphi(\sigma)(\tau^{a_\sigma}).$$ Finally, let $\Ann(I)$ be the $R[\varphi(H)]$-submodule of $Z(T)$ consisting of elements annihilated by $I$. 

We consider two cases. First suppose $\sigma\in H$ (of order $\ell$ with $\cl_G(\sigma)\not\subset H$) is such that $\varphi(\sigma)$ does not restrict to the identity on $\Ann(I)$. By definition of $I$, $\Ann(I)$ is a left $R[\varphi(H)/Z(S)]$-module. Thus we have a group $\cG=\Ann(I)\rtimes_{\varphi}\cH$, where $\cH=H/\varphi^{-1}(Z(S))$. The assumption that $\varphi(\sigma)$ is not the identity on $\Ann(I)$ means $\cl_{\cG}(\sigma\varphi^{-1}(Z(S)))\not\subset\cH$. So as before, we pull back the $\nil(S/Z(S))\nil(\Ann(I))=n_S-1$ subgroups of $\cG$ containing $\cH$ provided by the induction hypothesis. At least one of these, say $G_i\leq G$, is such that $\cl_{G_i}(\tau)\cap H=\emptyset$ for some $\tau\in\cl_G(\sigma)\cap G_i$. 

We have thus far produced $n_Sn_T-1$ subgroups of $G$ out of our allotment of $n_Sn_T$. So we must show that at most one subgroup of $G$ is needed for the second case: when $\sigma$ is such that $\varphi(\sigma)$ restricts to the identity on $\Ann(I)$ (and still $\varphi(\sigma)$ descends to the identity on $T/Z(T)$). We claim that $G'=\Ann(I)\rtimes_{\varphi}H$ is the only subgroup needed. To see this, let $\tau\in\cl_G(\sigma)\cap G'$ be any element of the form $\pi\sigma\pi^{-1}$ for some $\pi\in T$ satisfying $1\neq \pi\,\varphi(\sigma)(\pi^{-1})\in\Ann(I)$. Assuming such a $\pi$ exists, let us check that $\cl_{G'}(\tau)\cap H=\emptyset$. Elements of $G'$ are of the form $\pi'\sigma'$ for $\pi'\in \Ann(I)$ and $\sigma'\in H$. We have \begin{align*}(\pi'\sigma')\tau(\pi'\sigma')^{-1} &= (\pi'\sigma')\Big(\pi\,\varphi(\sigma)(\pi^{-1})\,\sigma\Big)\Big(\varphi(\sigma'^{-1})(\pi'^{-1})\,\sigma'^{-1}\Big)\\ &= \pi'\,\varphi(\sigma')(\pi\varphi(\sigma)(\pi^{-1}))\,\varphi(\sigma'\sigma\sigma'^{-1})(\pi'^{-1})\,\sigma'\sigma\sigma'^{-1}.\end{align*} This belongs to $H$ if and only if $\pi'\,\varphi(\sigma')(\pi\varphi(\sigma)(\pi^{-1}))\,\varphi(\sigma'\sigma\sigma'^{-1})(\pi'^{-1})=1$, which is equivalent to $\pi\,\varphi(\sigma)(\pi^{-1}) = \varphi(\sigma'^{-1})(\pi'^{-1}\varphi(\sigma'\sigma\sigma'^{-1})(\pi'))$. But $\pi\,\varphi(\sigma)(\pi^{-1})\neq 1$ by choice of $\pi$, while \begin{align*}\varphi(\sigma'^{-1})(\pi'^{-1}\varphi(\sigma'\sigma\sigma'^{-1})(\pi'))&= \varphi(\sigma'^{-1})(\pi'^{-1})\,\varphi(\sigma)(\varphi(\sigma'^{-1})(\pi'))\\&=(1-\varphi(\sigma))\cdot\varphi(\sigma^{-1})(\pi'^{-1})\\&\in (1-\varphi(\sigma))\cdot \Ann(I),\end{align*} which is the trivial submodule since $\varphi(\sigma)$ is the identity on $\Ann(I)$. Thus $G'$ works as claimed.

It remains only to prove that there exists $\pi\in T$ satisfying $1\neq \pi\,\varphi(\sigma)(\pi^{-1})\in\Ann(I)$. Start with any $\pi'\in T$ such that $\pi'\neq\varphi(\sigma)(\pi')$. We are assuming $\varphi(\sigma)$ descends to the identity on $T/Z(T)$, so $\pi'\,\varphi(\sigma)(\pi'^{-1})\in Z(T)$. Since $I$ is nilpotent, we make take $k\geq 0$ to be the smallest integer such that $I^{k+1}$ annihilates $\pi'\,\varphi(\sigma)(\pi'^{-1})$. Now, as a left $R[\varphi(H)]$-module, $I^k$ is generated by products of the form $$\prod_{i=1}^k(1-\varphi(\sigma_i))$$ for $\varphi(\sigma_i)\in Z(S)$. So minimality of $k$ implies that for some choice of $\sigma_1,...,\sigma_k$, we have \begin{equation}\label{eq:notone}\prod_{i=1}^k(1-\varphi(\sigma_i))\cdot \pi'\,\varphi(\sigma)(\pi'^{-1})\neq 1.\end{equation} But each $\varphi(\sigma_i)\in Z(S)$ commutes with $\varphi(\sigma)$. This allows us to write the element above in the form $\pi\,\varphi(\sigma)(\pi^{-1})$, where $$\pi=\prod_{i=1}^k(1-\varphi(\sigma_i))\cdot \pi'$$ (an abuse of notation since $\pi'$ may not be in $Z(T)$). We then have \begin{equation}\label{eq:one}I\cdot \pi\varphi(\sigma)(\pi^{-1})=I\prod_{i=1}^k(1-\varphi(\sigma_i))\cdot \pi'\varphi(\pi'^{-1})\subseteq I^{k+1}\cdot \pi'\,\varphi(\sigma)(\pi'^{-1})=1.\end{equation} Thus by (\ref{eq:notone}) and (\ref{eq:one}), $1\neq \pi\varphi(\sigma)(\pi^{-1})\in\Ann(I)$ as desired.\end{proof}

Observe that if $H$ and $N$ are abelian, the only nontrivial hypothesis in Theorem~\ref{thm:semidirect} is that for $h\in H$ of order $\ell$, $\varphi(h)$ is the identity on $N$ if it restricts to the identity on the Sylow $\ell$-subgroup of $N$, in which case $\delta_\ell(G/H)\leq 1\cdot 1+1=2$ as stated in Corollary~\ref{cor:intro3} in the introduction.

\begin{corollary}\label{cor:dihedral}Let $D_n=\langle r,s\,|\,r^n,s^2,(rs)^2\rangle$. Then $D_n/\langle s\rangle$ is 2-divisible if and only if $4\,|\,n$, in which case $\delta_2(D_n/\langle s\rangle) = 2$.\end{corollary}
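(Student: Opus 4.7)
The plan is to apply Theorem~\ref{thm:semidirect} for the upper bound $\delta_2\leq 2$ when $4\mid n$, and then argue directly on the explicit list of subgroups of $D_n$ containing $\langle s\rangle$ to obtain the matching lower bound as well as the non-divisibility when $4\nmid n$. Throughout I assume $n\geq 3$, since the smaller cases are degenerate (e.g., $D_2$ is the Klein four-group, and $\langle s\rangle\trianglelefteq D_2$ makes the extension Galois).

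For the upper bound, realize $D_n=\langle r\rangle\rtimes_\varphi\langle s\rangle$ with $\varphi(s)$ acting by inversion, and write $n=2^k m$ with $m$ odd. The Sylow $2$-subgroup of $\langle r\rangle$ is $T=\langle r^m\rangle$ (cyclic of order $2^k$), and $S=\varphi(\langle s\rangle)$ has order $2$. Both are cyclic, hence of nilpotency class at most $1$. The restriction $\varphi(s)|_T$ is inversion on $T$, which is the identity exactly when $|T|\leq 2$, i.e.\ when $4\nmid n$. So when $4\mid n$ the injectivity hypothesis of Theorem~\ref{thm:semidirect} holds, giving $\delta_2(D_n/\langle s\rangle)\leq\nil(S)\nil(T)+1=2$.

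The remaining claims rest on classifying the subgroups of $D_n$ containing $\langle s\rangle$: they are exactly $G_d=\langle r^d,s\rangle$ for $d\mid n$. In $G_d$ the reflections form $\{r^{kd}s\}_k$, while the $G_d$-conjugates of $s$ (computed via $r^{jd}sr^{-jd}=r^{2jd}s$) form $\{r^{2kd}s\}_k$. A short calculation shows $\langle r^{2d}\rangle=\langle r^d\rangle$ if and only if $n/d$ is odd, whence: if $n/d$ is odd, every reflection in $G_d$ lies in $\cl_{G_d}(s)$; if $n/d$ is even, the reflections in $G_d$ split into two $G_d$-conjugacy classes $\cl_{G_d}(s)=\{r^{2kd}s\}_k$ and $\cl_{G_d}(r^d s)=\{r^{(2k+1)d}s\}_k$, the latter disjoint from $\langle s\rangle$.

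The theorem now follows by parity analysis. For $n$ odd I take $\sigma=rs$; since $n/d$ is odd for every $d\mid n$, all reflections in $G_d$ are $G_d$-conjugate to $s$, so no $G_d$ yields a valid $\tau$. For $n\equiv 2\pmod 4$ I take $\sigma=r^2 s$ and check that, whether $d$ is odd (then $G_d\cap\cl_{D_n}(r^2 s)=\{r^{2kd}s\}_k=\cl_{G_d}(s)$) or even (then $n/d$ is odd, so again all reflections in $G_d$ equal $\cl_{G_d}(s)$), no valid $\tau$ exists. For $4\mid n$, to prove $\delta_2\geq 2$ I suppose some single $G_d$ sufficed: handling $\sigma_1=rs$ forces $G_d$ to contain an odd reflection, hence $d$ odd, but then the even reflections in $G_d$ are $\{r^{2kd}s\}_k=\cl_{G_d}(s)$, ruining the condition for $\sigma_2=r^2 s$. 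Combined with the upper bound, this yields $\delta_2=2$. The main obstacle is the bookkeeping needed to identify the $G_d$-conjugacy structure of reflections --- specifically, translating the condition ``$n/d$ is odd'' into whether $\cl_{G_d}(s)$ exhausts all reflections in $G_d$ or only half of them.
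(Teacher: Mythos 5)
Your proof is correct and follows essentially the same route as the paper's: Theorem~\ref{thm:semidirect} gives the upper bound $\delta_2\leq 2$ when $4\mid n$, and explicit bookkeeping of the conjugacy classes of reflections inside the subgroups $\langle r^d,s\rangle$ handles the rest. If anything, yours is slightly more complete, since the paper's printed argument only explicitly treats $4\nmid n$ and leaves the lower bound $\delta_2\geq 2$ (your $\sigma_1=rs$, $\sigma_2=r^2s$ pigeonholing) implicit.
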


\begin{proof}If $4\,|\,n$ then $s$ acts (by conjugation) nontrivially on $r^{n/4}$, so the hypothesis of Theorem~\ref{thm:semidirect} is satisfied. For the converse, if $G'\subseteq D_n$ contains both $s$ and some conjugate of $s$, say $r^{2i}s$ with $n\nmid 2i$, then $\cl_{G'}(r^{2i}s)$ contains all elements of the form $r^{2i+4j}s$. But if $4\nmid n$ we can choose $j$ so that $n\,|\,(2i+4j)$, which shows that $\cl_{G'}(r^{2i}s)\cap H\neq\emptyset$.\end{proof}

\subsection{Composite fields} It is false in general that if $K_1/F_1$ and $K_2/F_2$ are $\ell$-divisible then $K_1K_2/F_1F_2$ is $\ell$-divisible. For example, let $L/F_1$ be Galois with $\Gal(L/F_1)\simeq D_{12}=\langle r,\,s\mid r^12,\,s^2,\,(rs)^2\rangle$, the dihedral group of order 24. Let $K_1$ be the fixed field of $\langle s\rangle$, and let $F_2=K_2$ be the fixed field of $\langle r^2\rangle$. Then $K_1/F_1$ is 2-divisible by Corollary~\ref{cor:dihedral}, and $K_2/F_2$ is trivially 2-divisible. But the Galois group of the closure of $K_1K_2/F_1F_2$ is $\langle r^2,\,s\mid r^12,\,s^2,\,(rs)^2\rangle\simeq D_6$, and $K_1K_2=K_1$ is the fixed field of $\langle s\rangle$. Since $4\nmid 6$, $K_1K_2/F_1F_2$ is not 2-divisible by Corollary~\ref{cor:dihedral}.

There are two possible fixes: insisting that our field extensions share a common base field, or insisting on strong $\ell$-divisibility. These are the two results of this section.

\begin{theorem}\label{thm:composite}Let $F\subseteq K_1,...,K_n$ be number fields. If $K_i/F$ is $\ell$-divisible for all $i$, then $K_1\cdots K_n/F$ is $\ell$-divisible with $$\delta_\ell(K_1\cdots K_n/F)\leq \sum_{i=1}^n\delta_\ell(K_i/F).$$\end{theorem}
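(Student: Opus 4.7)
The plan is to recast the assertion as a purely group-theoretic claim, for which the hypothesis of a common base field $F$ is essential. Let $L$ denote the Galois closure of $K_1\cdots K_n/F$; this is a Galois extension of $F$ containing each $K_i$ (and hence each of their individual Galois closures). Set $G=\Gal(L/F)$, $H=\Gal(L/K_1\cdots K_n)$, and $H_i=\Gal(L/K_i)$ for $1\leq i\leq n$. An automorphism of $L/F$ fixes $K_1\cdots K_n$ if and only if it fixes each $K_i$, so $H=\bigcap_{i=1}^n H_i$. Since $L$ contains the Galois closure of each $K_i/F$, Lemma~\ref{lem:quo} tells us that $\ell$-divisibility of $K_i/F$ is equivalent to $\ell$-divisibility of the pair $G/H_i$, with $\delta_\ell(G/H_i)=\delta_\ell(K_i/F)$, and that the conclusion is equivalent to $\delta_\ell(G/H)\leq\sum_{i=1}^n\delta_\ell(G/H_i)$.

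Next, for each $i$ let $\delta_i=\delta_\ell(G/H_i)$ and fix a collection $H_i\leq G_{i,1},\ldots,G_{i,\delta_i}\leq G$ witnessing $\ell$-divisibility of $G/H_i$ as in Definition~\ref{def:divisible}. Since $H\subseteq H_i\subseteq G_{i,j}$ for every pair $(i,j)$, the combined family $\{G_{i,j}:1\leq i\leq n,\,1\leq j\leq \delta_i\}$ consists of $\sum_i\delta_i$ subgroups of $G$, each containing $H$. To show that this family verifies Definition~\ref{def:divisible} for $G/H$, take any $\sigma\in G-H$ of order $\ell$. Since $H=\bigcap_i H_i$, there is an index $i_0$ with $\sigma\notin H_{i_0}$. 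Applying the witness for $G/H_{i_0}$ to $\sigma$ yields some $j_0\leq\delta_{i_0}$ and some $\tau\in\cl_G(\sigma)\cap G_{i_0,j_0}$ such that $\cl_{G_{i_0,j_0}}(\tau)\cap H_{i_0}=\emptyset$. Because $H\subseteq H_{i_0}$, the disjointness automatically passes to $H$, so $G_{i_0,j_0}$ is a valid witness for $\sigma$ in $G/H$. This establishes both $\ell$-divisibility of $G/H$ and the claimed bound on $\delta_\ell(G/H)$.

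The argument is essentially formal once the translation via Lemma~\ref{lem:quo} is in place, and the only real content lies in the identity $H=\bigcap H_i$---this is precisely where the common base field $F$ is used. Consequently I expect no significant obstacle: the $D_{12}$ counterexample preceding the theorem shows that, without a shared base, the Galois group of the compositum need not relate to those of the individual extensions by such a clean intersection, but that obstruction simply does not arise here.
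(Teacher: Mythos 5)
Your proof is correct and is essentially the paper's own argument: the paper works with the restriction maps $\varphi_i:G\to G_i$ and the pulled-back subgroups $\varphi_i^{-1}(G_{i,j})$, which under the correspondence of Lemma~\ref{lem:quo} are exactly your witness subgroups for $G/H_i$ inside $G=\Gal(L/F)$. The key step in both versions is identical: for $\sigma\in G-H$ of order $\ell$, pick $i$ with $\sigma\notin H_i$ and observe that disjointness of a conjugacy class from $H_i$ forces disjointness from $H\subseteq H_i$.
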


\begin{proof}Let $G_i/H_i=\Gal(K_i/F)$, let $G/H=\Gal(K_1\cdots K_n/F)$, and let $\varphi_i:G\to G_i$ denote the restriction to $K_i$. For each $i$, let $\delta_i=\delta(K_i/F)$, and fix subgroups $G_{i,1},...,G_{i,\delta_i}$ that satisfy Definition \ref{def:divisible}. We claim that the subgroups $\varphi^{-1}_i(G_{i,j})$ satisfy Definition \ref{def:divisible} for $G/H$.

Let $\sigma\in G-H$ have order $\ell$. Since $\sigma$ does not fix $K_1\cdots K_n$, it must not fix $K_i$ for some $i$. Thus $\varphi_i(\sigma)$ is an element of order $\ell$ in $G_i-H_i$. By assumption, there is some $j\leq \delta_i$ and $\tau'\in \cl_{G_i}(\pi_i(\sigma))\cap G_{i,j}$ such that $\cl_{G_{i,j}}(\tau')\cap H_i=\emptyset$. Now, let $\tau\in G$ be any $G$-conjugate of $\sigma$ that satisfies $\varphi_i(\tau)=\tau'$ (which exists by surjectivity of $\varphi_i$). Note that $\tau\in \varphi^{-1}_i(G_{i,j})$. Furthermore, the image under $\varphi_i$ of any $\varphi^{-1}_i(G_{i,j})$-conjugate of $\tau$ is contained in $\cl_{G_{i,j}}(\tau')$, and therefore does not fix $K_i$. Thus no $\varphi^{-1}_i(G_{i,j})$-conjugate of $\tau$ can be in $H$.\end{proof}

The lemma below fails if strong $\ell$-divisibility is replace by $\ell$-divisibility. This is why the previous theorem requires a unique base field, unlike Theorem~\ref{thm:compositestrong}.

\begin{lemma}\label{lem:compositestrong}Let $H,G'\leq G$ be groups. If $G/H$ is strongly $\ell$-divisible, then so is $G'/H\cap G'$ with $\Delta_\ell(G'/H\cap G')\leq\Delta_\ell(G/H)$.\end{lemma}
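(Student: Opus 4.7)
The plan is to transfer a witnessing family of subgroups for $G/H$ to one for $G'/(H\cap G')$ by intersection with $G'$, then control the resulting index sum by a coset-injection argument.

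Fix subgroups $H\leq G_1,\ldots,G_\delta\leq G$ witnessing the strong $\ell$-divisibility of $G/H$ and realizing $\Delta_\ell(G/H)=\frac{1}{\ell}\sum_i[G_i:H]$. I would propose the subgroups $G_i':=G_i\cap G'$, each of which contains $H\cap G'$ since $H\leq G_i$. To verify Definition~\ref{def:Divisible} for $G'/(H\cap G')$, take any $\sigma\in G'-(H\cap G')$ with $\sigma^\ell\in H\cap G'$ and $|\sigma|=\ell^n$. Because $\sigma\in G'$ and $\sigma\notin H\cap G'$, we get $\sigma\notin H$, so $\sigma\in G-H$ with $\sigma^\ell\in H$ and order a power of $\ell$. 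Strong $\ell$-divisibility of $G/H$ then yields some $i$ with $\sigma\in G_i$ and $\cl_{G_i}(\sigma)\cap H=\emptyset$. Hence $\sigma\in G_i\cap G'=G_i'$, and since $\cl_{G_i'}(\sigma)\subseteq\cl_{G_i}(\sigma)$, we have
\[\cl_{G_i'}(\sigma)\cap(H\cap G')\subseteq\cl_{G_i}(\sigma)\cap H=\emptyset.\]

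For the numerical bound, the key observation is the standard coset injection
\[\frac{G_i\cap G'}{H\cap G'}\hookrightarrow\frac{G_i}{H},\qquad x(H\cap G')\longmapsto xH,\]
which is well defined because $H\cap G'\subseteq H$ and injective because if $x,y\in G_i\cap G'$ satisfy $y^{-1}x\in H$, then $y^{-1}x\in H\cap G'$. Consequently $[G_i':H\cap G']\leq[G_i:H]$ for each $i$, and summing gives
\[\Delta_\ell(G'/(H\cap G'))\leq\frac{1}{\ell}\sum_{i=1}^\delta[G_i':H\cap G']\leq\frac{1}{\ell}\sum_{i=1}^\delta[G_i:H]=\Delta_\ell(G/H).\]

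There is no real obstacle here; the only point requiring care is confirming that the hypothesis ``$\sigma^\ell\in H\cap G'$'' inherited from $G'$ is enough to invoke strong $\ell$-divisibility of $G/H$, which it is since $H\cap G'\subseteq H$. The failure of the analogous statement for plain $\ell$-divisibility (alluded to in the paragraph preceding the lemma) is precisely that the conjugate $\tau$ produced by Definition~\ref{def:divisible} need not lie in $G'$, whereas strong $\ell$-divisibility places $\sigma$ itself into $G_i$ and thus automatically into $G_i\cap G'$.
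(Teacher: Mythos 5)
Your proposal is correct and matches the paper's proof essentially verbatim: intersect the witnessing subgroups with $G'$, observe that the defining condition for strong $\ell$-divisibility transfers because $\sigma$ itself (not merely a conjugate) lies in $G_i$, and bound the indices of the intersections. Your explicit coset-injection argument for $[G_i\cap G':H\cap G']\leq[G_i:H]$ is a slightly more careful justification of the index inequality than the paper bothers to give, but it is the same idea.
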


\begin{proof}Let $G_1,...,G_\delta$ satisfy Definition~\ref{def:Divisible} for $G/H$. If the order of $\sigma\in G'-H\cap G'$ is a power of $\ell$ and $\sigma^\ell\in H\cap G'$, then we must have $\sigma\in G-H$ and $\sigma^\ell\in H$. Thus for some $i\leq \delta$, $\sigma\in G_i$ and $\cl_{G_i}(\sigma)\cap H=\emptyset$. This implies $\sigma\in G_i\cap G'$ and $\cl_{G_i\cap G'}(\sigma)\cap (H\cap G')=\emptyset$. In particular, the groups $G_1\cap G',...,G_\delta\cap G'$ satisfy Definition \ref{def:Divisible} for $G'/H\cap G'$. The bound on $\Delta_\ell(G'/G'\cap H)$ follows from $[G':H\cap G']\leq[G:H]$.\end{proof}

\begin{theorem}\label{thm:compositestrong}For $i=1,...,n$, suppose $K_i/F_i$ is a strongly $\ell$-divisible extension of number fields. Then $K_1\cdots K_n/F_1\cdots F_n$ is strongly $\ell$-divisible with $$\Delta_\ell(K_1\cdots K_n/F_1\cdots F_n)\leq\sum_{i=1}^n[K_1\cdots K_n:K_1\cdots K_iF_{i+1}\cdots F_n]\Delta_\ell(K_i/F_i).$$\end{theorem}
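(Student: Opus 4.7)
The plan is to reduce the result to Theorem~\ref{thm:fieldtower} via the tower
$$L_0 := F_1\cdots F_n \;\subseteq\; L_1 := K_1F_2\cdots F_n \;\subseteq\; L_2 := K_1K_2F_3\cdots F_n \;\subseteq\; \cdots \;\subseteq\; L_n := K_1\cdots K_n.$$
Note that because $F_i\subseteq K_i$, we have $L_i = K_i L_{i-1}$, so each step in the tower is the compositum of $K_i$ with $L_{i-1}$ (which already contains $F_i$). If we can show each $L_i/L_{i-1}$ is strongly $\ell$-divisible with $\Delta_\ell(L_i/L_{i-1})\leq \Delta_\ell(K_i/F_i)$, then Theorem~\ref{thm:fieldtower} yields
$$\Delta_\ell(K_1\cdots K_n/F_1\cdots F_n)\leq\sum_{i=1}^n [K_1\cdots K_n:L_i]\,\Delta_\ell(L_i/L_{i-1})\leq\sum_{i=1}^n [K_1\cdots K_n:L_i]\,\Delta_\ell(K_i/F_i),$$
which is precisely the claimed bound since $L_i = K_1\cdots K_iF_{i+1}\cdots F_n$.

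To execute the inductive step, fix a sufficiently large Galois extension $N/\bQ$ containing $K_1\cdots K_n$, and set $G_i = \Gal(N/F_i)$, $H_i = \Gal(N/K_i)$, so that by Lemma~\ref{lem:quo} the pair $G_i/H_i$ is strongly $\ell$-divisible with $\Delta_\ell(G_i/H_i) = \Delta_\ell(K_i/F_i)$. Inside $G_i$, let $G_i' = \Gal(N/L_{i-1})$. Because $F_i\subseteq L_{i-1}$ we have $G_i'\leq G_i$, and because $L_i = K_iL_{i-1}$ we have $\Gal(N/L_i) = H_i\cap G_i'$. Lemma~\ref{lem:compositestrong} applied to $G_i'\leq G_i$ gives that $G_i'/(G_i'\cap H_i)$ is strongly $\ell$-divisible with $\Delta_\ell(G_i'/(G_i'\cap H_i))\leq \Delta_\ell(G_i/H_i)$. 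A second application of Lemma~\ref{lem:quo} (collapsing $N$ down to the Galois closure of $L_i/L_{i-1}$) then says this is exactly the strong $\ell$-divisibility of $L_i/L_{i-1}$, with $\Delta_\ell(L_i/L_{i-1})\leq \Delta_\ell(K_i/F_i)$ as required.

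The main obstacle is purely bookkeeping: one has to verify carefully that the identifications $\Gal(N/L_i) = H_i\cap G_i'$ and the application of Lemma~\ref{lem:quo} really convert the group-theoretic statement about $G_i'/(G_i'\cap H_i)$ into the field-theoretic statement about $L_i/L_{i-1}$. Once this bookkeeping is in place, the theorem follows by assembling the three ingredients (Lemma~\ref{lem:compositestrong}, Lemma~\ref{lem:quo}, and Theorem~\ref{thm:fieldtower}) along the chain $L_0\subseteq\cdots\subseteq L_n$.
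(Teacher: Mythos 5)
Your proof is correct and follows essentially the same route as the paper's: the same tower $L_i=K_1\cdots K_iF_{i+1}\cdots F_n$ (the paper's $\widehat{F}_i$) and the same key reduction via Lemma~\ref{lem:compositestrong} applied to $\Gal(N/L_{i-1})\leq\Gal(N/F_i)$, with Lemma~\ref{lem:quo} handling the passage between a common overfield and the relevant Galois closures. The only difference is organizational: you delegate the final assembly to Theorem~\ref{thm:fieldtower}, whereas the paper carries it out by hand by pulling back the witnessing subgroups along the restriction maps and verifying Definition~\ref{def:Divisible} directly, which yields the same subgroups and the same index bookkeeping.
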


\begin{proof}For each $i$, let $G_i/H_i=\Gal(K_i/F_i)$, let $L_i$ denote the Galois closure of $K_i/F_i$, let $L=L_1\cdots L_n$, let $\widehat{F}_i=K_1\cdots K_iF_{i+1}\cdots F_n$, let $G/1=\Gal(L/\widehat{F}_0)$ and $H/1=\Gal(L/\widehat{F}_n)$, let $\varphi_i:\Gal(L/\widehat{F}_{i-1})\to G_i$ denote the restriction, and let $G'_i$ denote the image of $\varphi_i$. 

By Lemma~\ref{lem:compositestrong}, $G'_i/H_i\cap G'_i$ is strongly $\ell$-divisible. Furthermore, we may choose subgroups $G'_{i,1},...,G'_{i,\delta_i}\leq G'$ that satisfy Definition~\ref{def:Divisible} for $G'_i/H_i\cap G'_i$ with $\frac{1}{\ell}\sum_j[G'_{i,j}:H_i\cap G'_i]\leq \Delta_\ell(G_i/H_i)=\Delta_\ell(K_i/F_i)$. We claim that the subgroups $\varphi_i^{-1}(G_{i,j})$ satisfy Definition~\ref{def:Divisible} for $G/H$. Once proved, this implies the upper bound in the theorem statement by the following argument: \begin{align*}\Delta_\ell(\widehat{F}_n/\widehat{F}_0)&\leq \frac{1}{\ell}\sum_{i=1}^n\sum_{j=1}^{\delta_i}[\pi_i^{-1}(G'_{i,j}):H]\\ &=\frac{1}{\ell}\sum_{i=1}^n[\varphi_i^{-1}(H_i\cap G'_i):H]\sum_{j=1}^{\delta_i}[\varphi_i^{-1}(G'_{i,j}):\varphi_i^{-1}(H_i\cap G'_i)]\\ &\leq \sum_{i=1}^n[\varphi_i^{-1}(H_i\cap G'_i):H]\Delta_\ell(K_i/F_i)\\ &= \sum_{i=1}^n[\Gal(L/\widehat{F}_i):\Gal(L/\widehat{F}_n)]\Delta_\ell(K_i/F_i)\\ &=\sum_{i=1}^n[\widehat{F}_n:\widehat{F}_i]\Delta_\ell(K_i/F_i).\end{align*}

Suppose the order of $\sigma\in G-H$ is a power of $\ell$ and $\sigma^\ell\in H$. Since $\sigma$ does not fix $K_1\cdots K_n$, we may choose $i\leq n$ to be the largest index such that $\sigma$ does not fix $K_i$. In particular, $\sigma\in \Gal(L/\widehat{F}_{i-1})$, and $\varphi_i(\sigma)\in G'_i-H_i\cap G'_i$. The order of $\varphi_i(\sigma)$ is also a power of $\ell$ and $\varphi_i(\sigma)^\ell\in H_i\cap G'_i$, so there exists some $j\leq \delta_i$ such that $\varphi_i(\sigma)\in G'_{i,j}$ and $\cl_{G'_{i,j}}(\varphi_i(\sigma))\cap (H_i\cap G'_i)=\emptyset$. This implies $\sigma\in\varphi_i^{-1}(G'_{i,j})$. Also, no $\varphi_i^{-1}(G'_{i,j})$-conjugate of $\sigma$ belongs to $H$ since no such conjugate fixes $K_i$; otherwise its image under $\varphi_i$ would lie in $H_i\cap G'_i$, which is impossible.\end{proof}

\printbibliography

\end{document}